\newtheorem{thm}{Theorem}[section]
\newtheorem{prop}[thm]{Proposition}
\newtheorem{lem}[thm]{Lemma}
\theoremstyle{definition}
\newtheorem{defn}[thm]{Definition}
\newcommand{\be}{\begin{equation}}
\newcommand{\ee}{\end{equation}}
\newcommand{\R}{\mathbb{R}}
\newcommand{\N}{\mathbb{N}}
\newcommand{\E}{\mathbb{E}}
\def \eps {{ \varepsilon }}
\def \o {{  {\mathbb{R}^n} }}
\def \calf {{  {\mathcal{F}} }}
\def \cala {{  {\mathcal{A}}  }}
\def \cale {{  {\mathcal{E}}  }}
\def \calg {{  {\mathcal{G}}  }}
\def \calz {{  {\mathcal{Z}}  }}
 \def \call {{  {\mathcal{L}}  }}
\begin{document}

\baselineskip=1.2 \baselineskip

%
%
%
%
%
%
%
%

\begin{center}
 {  \bf 
 	Uniform
 Large Deviation Principles  of Fractional 
 Reaction-Diffusion  Equations
 Driven by  Superlinear  Multiplicative  Noise
 on $\R^n$
   }
\end{center}

\medskip

\medskip

\begin{center}
 Bixiang Wang  
\vspace{1mm}\\
Department of Mathematics, New Mexico Institute of Mining and
Technology \vspace{1mm}\\ Socorro,  NM~87801, USA \vspace{3mm}\\
Email: bwang@nmt.edu\vspace{6mm}\\
\end{center}



\begin{abstract}  
In this paper, we investigate the uniform
large deviation principle of  
 the  fractional stochastic reaction-diffusion equation
on the entire space $\R^n$ as the noise intensity
approaches zero.
 The nonlinear drift term is dissipative
and has a polynomial growth of any order. The
  nonlinear diffusion term
  is locally Lipschitz continuous and has a 
  superlinear growth rate.
  By the weak convergence method, we establish
  the Freidlin-Wentzell uniform large deviations
  over bounded initial data as well as the 
  Dembo-Zeitouni uniform large deviations
  over compact initial data.
  The main difficulties are  caused by the
  superlinear growth of  noise coefficients
  and the non-compactness of   Sobolev 
  embeddings on unbounded domains.
  The dissipativeness of the drift term and the idea
  of uniform tail-ends estimates of solutions
  are employed to circumvent  these  difficulties.
   \end{abstract}

{\bf Key words.}    Uniform large deviation principle; 
weak convergence method; tightness;    superlinear growth;
unbounded domain;  tail estimate.

  {\bf MSC 2010.}   60F10, 60H15, 37L55, 35R60.

\section{Introduction}
\setcounter{equation}{0}

    This  paper is concerned with
    the  uniform  large deviation principle (LDP) of 
 the fractional stochastic reaction-diffusion
equation    with  a nonlinear drift term
of polynomial order driven by a superlinear
multiplicative noise on   $\R^n$, which is given by:
\be
  \label{intr1}
  du^{\eps} (t)
  + (-\Delta)^ \alpha  u^{\eps} (t)  dt
  + F(t,x, u^{\eps}(t)) dt
  =  
    g(t,x)   dt
  +\sqrt{\eps} \sigma (t, x,  u^{\eps}(t))    {dW} ,
  \ t>0,
  \ee 
  with   initial condition 
 \be\label{intr3}
 u^{\eps}( 0, x ) = u_0 (x), 
 \ee 
where   $x\in \R^n$,
$\eps \in (0,1)$ and   $g\in L^2_{loc}(\R; L^2(\o))$.
The operator 
 $(-\Delta)^\alpha$ with $\alpha
 \in (0,1)$ is  the fractional Laplacian.
 The nonlinear drift  $F$ has a 
 growth order $r$ for any $r>1$ in its last argument,
  the nonlinear diffusion 
   $\sigma$  has a superlinear growth 
   up to order 
   $ {\frac 34} + {\frac r4}$,
      and $W$
 is a
 cylindrical Wiener process  in $l^2$
 on
 a  complete filtered probability space 
$(\Omega, \mathcal{F}, 
   \{ { \mathcal{F}} _t\} _{t\ge 0},  P )$.

 Fractional  PDEs
 arise in many applications
    \cite{abe1,  garr1,    guan2,  
    jara1, kos1}. The well-posedness
    and the long term dynamics
    of such equations have been explored
    in   
   \cite{abe1,  caff1,  dine1,  gal1, garr1,  guan2,
    jara1, kos1,  luh1,   ros1, ser1, ser2}  
    and    
     \cite{chen1, gu1, luh2,  wan10, wangJDE2019},
     respectively.
     In the present paper, we will investigate
     the uniform LDP of  the fractional equation
      \eqref{intr1}-\eqref{intr3}
      with superlinear noise.
    
     The  LDP   of
  stochastic   equations  has been extensively
  studied 
  in the literature, see, e.g.,    
    \cite{bra1, card1, cerr1, che1,chow1,  dem1, 
  far1, fre1, fre2, gau1, kall1, mart1, pes1, sow1,
  str1,
  var1, var2, ver1}
  for the classical theory,  and
   \cite{bess1, brz1, bud1, bud2, cerr2, cerr3,
   cerr4, chu1, 
   duan1, dup1, liu1, ort1, ren1, roc1,sal1, sal2, sal3}
   for the weak convergence theory.
   In particular, 
   the reader is  further referred to 
   \cite{fre1, fre2} for the development 
  of  the classical  theory and to
   \cite{bud1,  dup1}   
  for  the weak convergence  theory.

    If $\alpha=1$, then the fractional equation
    \eqref{intr1} reduces to the standard
    reaction-diffusion equation.
    When the nonlinear drift  $F$  
     has  at most  a linear growth rate,
      the   
     LDP  of the standard  reaction-diffusion 
     equation has  been established  in
    \cite{cerr5, che1, chow1, fre1, kall1,
    pes1, sow1}.
    When  $F$  has a polynomial growth rate,
    the LDP of the   equation has been
    proved in 
     \cite{cerr1, cerr3, hai1, ren1}.

    On the other hand,   the 
      uniform  LDP    of stochastic equations
      has been  discussed  in   
    \cite{bis1, bud1, bud2, 
   cerr1, che1, dem1, gau1, fre2,
   pes1, sal1, sal2, sal3, ver1}, which is 
   crucial for  determining     the 
    exit time
   and exit place of    solutions.
   We remark that in all these publications,
   the nonlinear diffusion term
   for the reaction-diffusion equation
   has at most a {\it linear} growth, and
   the underlying domain of the equation  is {\it bounded},
   where the solution operators
  and   Sobolev embeddings are all compact.
   The compactness of   Sobolev embeddings 
   is critical for proving the tightness and
   convergence of a sequence of solutions as
   well as the compactness of level sets of  rate functions,
   which are all 
   needed  for establishing the
   LDP of solutions in bounded domains 
    when the nonlinear drift $F$ 
   has a polynomial growth.

   In the case of unbounded domains,
   Sobolev embeddings are no longer compact,
   which introduces an essential difficulty
   to prove the tightness of a set of solutions
   as well as the convergence of 
   a sequence of solutions with respect to
   some control parameters.
   Consequently, the LDP of 
   stochastic equations defined
   on unbounded domains
   cannot be obtained by the
   arguments developed  for bounded domains
   based on compactness of embeddings.

   Recently, 
   the LDP of the reaction-diffusion
   equation on $\R^n$ 
   with polynomial drift and
   globally Lipschitz diffusion  was
   proved  in \cite{wangJDE2023}, where the idea of
   uniform tail-ends estimates of solutions
   was used to circumvent the difficulty
   caused by the non-compactness of
   Sobolev embeddings on unbounded domains.
   However, the argument of 
   \cite{wangJDE2023}
   does not apply to the case
   of superlinear noise.
   It seems that  so far   there is no  
   result available
    in the literature 
    regarding  the LDP of   the reaction-diffusion equation 
    driven by a  superlinear multiplicative noise even if
    the domain is bounded.
    
     The goal of the present paper is to deal with
     this  issue  and prove the LDP
     (actually the uniform LDP)
     of \eqref{intr1}-\eqref{intr3}
     with polynomial  drift   and
     superlinear noise     
     on the entire space $\R^n$.
     
     The difficulty introduced by the superlinear noise
     lies in the fact that the derivation of uniform estimates
     of solutions is more involved in this case. 
     We will  take  advantage of the dissipativeness
     of the drift term $F$ to control the superlinear
     growth of the diffusion term $\sigma$ in \eqref{intr1}.
    Based on this idea, we 
    first  establish the
    existence and uniqueness of solutions of
    \eqref{intr1}-\eqref{intr3}
    by taking the limit of a sequence of
    approximate solutions defined in
    expanding  bounded  balls in $\R^n$
    (see Lemma \ref{le32}).
    We then derive the uniform estimates of solutions
    of the  control equation
    with respect to bounded initial data and controls,
    including the uniform tail-ends estimates of solutions
    (see Lemmas \ref{cosol} and \ref{tail}).
     The  uniform tail-ends estimates of solutions
      indicate that the solutions are uniformly small
      on the complement of a sufficiently large
      ball in $\R^n$, By this fact and   
      the compactness of   embeddings
      in bounded domains, we are able to
      prove the compactness of solutions
      of the control equation with respect
      to bounded controls in the entire space $\R^n$
      as in \cite{wangJDE2023}, which further
      implies the strong convergence of solutions
      with respect to the weak topology of controls
      in $L^2(0,T; l^2)$ (see Lemma \ref{wc}).
      Such strong convergence  plays a vital role
     for proving the uniform LDP of \eqref{intr1},
     including the compactness of level sets
     of rate functions.
     
     The first result of the paper is the
     Freidlin-Wentzell uniform  LDP
       of \eqref{intr1}-\eqref{intr3}
       in  the space
     $C([0,T],L^2(\R^n))
     \bigcap L^2(0,T; H^\alpha
     (\R^n))
     \bigcap L^p(0,T; L^p(\R^n))$ 
     with respect to  bounded initial data
     in $L^2(\R^n)$
       (see Theorem \ref{main}),
      which   is obtained by
     the uniform convergence of solutions
     of   the stochastic
     equation on bounded initial data
     as $\eps \to 0$
     (Lemma \ref{cvs}) along with the equivalence
     of the Freidlin-Wentzell uniform  LDP
     and the equicontinuous uniform Laplace principle.
     The concept of  equicontinuous uniform Laplace principle
     was introduced by Salins  in \cite{sal2} and the equivalence
     of  this principle and the 
      Freidlin-Wentzell uniform  LDP
      was proved in \cite[Theorem 2.10]{sal2}
      by the weak convergence method.

    The second result of the paper is the
     Dembo-Zeitouni uniform LDP
     of \eqref{intr1}-\eqref{intr3}
   with respect to compact initial data 
   in $L^2(\R^n)$.
   As shown in \cite{sal2},
   the     
     Freidlin-Wentzell uniform  LDP
is not equivalent to the 
      Dembo-Zeitouni uniform LDP
      over bounded initial data,
      Instead, they are equivalent
      over compact initial data under
      certain conditions \cite[Theorem 2.7]{sal2}.  
      We will  first prove  the 
      continuity  of 
       level sets of rate functions
      of \eqref{intr1} 
      in terms of the Hausdorff metric
      with respect to initial data, and
      then the
       Dembo-Zeitouni uniform LDP
       in  
     $C([0,T],L^2(\R^n))
     \bigcap L^2(0,T; H^\alpha
     (\R^n))
     \bigcap L^p(0,T; L^p(\R^n))$ 
     with respect to compact initial data
     in $L^2(\R^n)$
       (see Theorem \ref{main1}),
      Note that 
     if $\alpha =1$,    the  results of the paper
      are also valid  and the proof is simpler in this 
      case.

      We mention that 
   the  Freidlin-Wentzell 
     uniform LDP of the 
     reaction-diffusion equation 
       on $\R^n$ 
       with  
   additive noise was   established  in
   \cite{wangSD2023} by the
   uniform contraction  principle,
   which does not apply to the case of
   multiplicative noise.

     In the next section, we review 
     the uniform LDP of random systems.
     We then prove the well-posedness and the
     uniform LDP of 
       \eqref{intr1}-\eqref{intr3}
       in the last two sections.

 \section{Weak convergence theory 
 for uniform  large deviations}
 
 In this section, we 
 review basic  results of uniform LDP
 from the 
 weak convergence  theory   
  \cite{bud1, dup1, sal2}.

  Let  $l^2$ be the   space
of square summable   sequences of real numbers, 
and  $\{W(t)\}_{t\in [0,T]} $
   is 
a cylindrical Wiener process 
in   $l^2$
with respect to 
a  complete filtered probability space 
$(\Omega, \mathcal{F}, 
   \{ { \mathcal{F}} _t\} _{t\ge 0},  P )$.
  Then 
   there exists a   separable 
   Hilbert space $U$
   such that
    the embedding
  $l^2 \hookrightarrow U$ is   Hilbert-Schmidt
  and $W(t)$ takes values in $U$.
  Given a Banach space $Z$, the closed ball
  in $Z$ centered at the origin and of radius
  $r>0$ is denoted by
  $\overline{B}_r(Z)$.

Let  $(\cale, \rho)$  
and
$(\cale_0, \rho_0)$
be    Polish  spaces,  and
   $
 \{X^\eps_x: \eps>0, x\in \cale_0
 \}$ be a family of  $\cale$-valued random 
 variables.
  Suppose that for every
  $\eps>0$ and $x\in \cale_0$,
  there exists 
 a
 measurable map
 $\calg^\eps_x: C([0,T], U) \to \cale$ 
 such that
 $
 X^\eps_x = \calg ^\eps_x (W)$.
  Let $\cala$
 be the space of  
 $ l^2$-valued   processes
 $v$
 which are progressively measurable 
 and
 $\int_0^T \| v (t)\|^2 dt<\infty$
 $P$-almost surely.
 Given $N>0$, denote by
   \be\label{calan}
 \cala_N
 =\{v\in \cala: v (\omega) \in 
 \overline{B}_N(L^2(0,T; l^2))
 \ \text{for almost  all } \omega \in \Omega
 \},
\ee
   where 
   $\overline{B}_N(L^2(0,T; l^2))$
   is 
   the closed ball in 
   $L^2(0,T; l^2)$ with center zero and
   radius $N$, which is endowed with
     the weak topology.
     
     Given $x\in \cale_0$,
     let 
       $I_x : \cale \to [0, \infty]$
 be a  function such that
     for every
 $0\le s <\infty $, the level set
 $I^s_x =\{\phi \in \cale: I_x (\phi) \le s\}$ is a compact
 subset of $\cale$.
 Such a function $I_x$ is called a good rate function
 on $\cale$.

 The Freidlin-Wentzell uniform LDP
 is stated  below \cite[Section 3.3]{fre2}.
 
 \begin{defn}
 The family $\{X^\eps_x, \eps>0,
 x\in \cale_0 \}$ is said to satisfy
  the
 Freidlin-Wentzell uniform LDP
 in $\cale$ with  
 rate functions $I_x$ 
 uniformly on $\cale_0$ if  
 
 (i)  For every $s>0$  and $\delta>0$,
 $$
 \liminf_{\eps \to 0}
 \inf_{x\in \cale_0}
 \inf_{\phi
 \in I_x^{s}}
 \left (
 \eps \ln P
 \left (
 \rho (X_x^\eps, \
 \phi  )
 < \delta
 \right )
 +I_x (\phi)
 \right )
 \ge 0.
 $$
 
 (ii)  For every $s_0>0$  and
 $\delta>0$,
 $$
 \limsup_{\eps \to 0}
 \sup_{x\in \cale_0}
 \sup_{0\le s\le s_0} 
 \left (
 \eps \ln P
 \left (
 {\rm dist} (X_x^\eps, \
 I_x^s   )
\ge \delta
 \right ) +s  
 \right )
\le  0.
 $$
 \end{defn}

 The next concept is
 the  Dembo-Zeitouni
   uniform LDP  \cite[Corollary 5.6.15]{dem1}.

    \begin{defn}
 The family $\{X^\eps_x, \eps>0,
 x\in \cale_0 \}$ is said to satisfy
  the
 Dembo-Zeitouni
  uniform LDP
 in $\cale$ with  
 rate functions $I_x$ 
 uniformly on $\cale_0$ if  
 
 (i)  For every open subset $G$ in $\cale$,
 $$
 \liminf_{\eps \to 0}
 \inf_{x\in \cale_0}
  \left (
 \eps \ln P
 \left ( 
  X_x^\eps \in G \right )
  \right )
  \ge -\sup_{x\in \cale_0}
 \inf_{\phi\in G}
  I_x(\phi) .
 $$
 
 (ii)  For every closed subset $F$ in $\cale$,
  $$
 \limsup_{\eps \to 0}
 \sup_{x\in \cale_0} 
 \left (
 \eps \ln P
 \left (  X_x^\eps \in F
 \right )\right )
 \le - \inf_{x\in
 \cale_0}
 \inf_{\phi\in F}
 I_x (\phi).
  $$
 \end{defn}

    It follows from
    \cite{sal2} that
    the Freidlin-Wentzell
    uniform LDP is not
    equivalent to the
    Dembo-Zeitouni
    uniform LDP in general.
    But they are equivalent
    if $(\cale_0, \rho_0)$ is
    compact and the
    the level sets of $I_x$
    are continuous with respect to
    $x\in \cale_0$ in
    terms  of the
    Hausdorff metric.
    
     \begin{prop}\label{eqldp}
     \cite[Theorem 2.7]{sal2}
     Suppose 
     $(\cale_0, \rho_0)$ is
    compact and  for every $s\ge 0$,
    $$
    \lim_{n\to \infty}
    \max
    \left \{
    \sup_{\phi
    \in I^s_x}
    {\rm dist}
    (\phi,
    I_{x_n}^s),\
    \sup_{\phi\in I^s_{x_n}}
    {\rm dist}
    (\phi, I^s_x)
    \right \}
    =0,
    \ \ 
    \rm{ as \ }  
    \rho_0 (x_n ,x )  \to 0.
    $$
    Then the  
     family $\{X^\eps_x, \eps>0,
 x\in \cale_0 \}$   satisfies 
 the Freidlin-Wentzell
 uniform LDP in  $\cale$
 with  
 rate functions $I_x$ 
 uniformly on $\cale_0$ 
 if and only if it satisfies 
  the
 Dembo-Zeitouni
  uniform LDP
 in $\cale$ with  
 rate functions $I_x$ 
 uniformly on $\cale_0$. 
  \end{prop}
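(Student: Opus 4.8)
The plan is to establish the two implications separately, using in each direction the compactness of $\cale_0$ and the Hausdorff continuity of $x\mapsto I_x^s$ to pass from statements uniform over the (possibly infinite) set $\cale_0$ to statements involving only finitely many reference indices $x_1,\dots,x_k$ and finitely many fixed open or closed subsets of $\cale$, on which the assumed uniform LDP can be applied directly. Two preliminary facts will be used repeatedly. (a) For every $s\ge 0$ the set $\calk_s:=\overline{\bigcup_{x\in\cale_0}I_x^s}$ is compact in $\cale$; it is enough to show that any sequence $\psi_m\in I_{x_m}^s$ has a convergent subsequence, and indeed, extracting $x_m\to x_*$ in $\cale_0$, the Hausdorff continuity gives ${\rm dist}(\psi_m,I_{x_*}^s)\to 0$, whereupon the compactness of the good level set $I_{x_*}^s$ produces a subsequence with $\psi_m\to\psi_*\in I_{x_*}^s$. (b) The map $(x,\phi)\mapsto I_x(\phi)$ is jointly lower semicontinuous on $\cale_0\times\cale$: if $x_m\to x$, $\phi_m\to\phi$ and $c:=\liminf_m I_{x_m}(\phi_m)<\infty$, then for each $\eta>0$ one has $\phi_m\in I_{x_m}^{c+\eta}$ along a subsequence, and the argument of (a) forces $\phi\in I_x^{c+\eta}$, so $I_x(\phi)\le c$.

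For ``FW-ULDP $\Rightarrow$ DZ-ULDP'' I would argue as follows. \emph{Lower bound.} Given an open set $G$, put $m=\sup_{x\in\cale_0}\inf_{\phi\in G}I_x(\phi)$ (there is nothing to prove if $m=\infty$) and fix $\eta>0$. For each $x$ choose $\phi_x\in G$ with $I_x(\phi_x)\le m+\eta$, so $\phi_x\in I_x^{m+\eta}$, and choose $r_x>0$ with $\overline{B}_{2r_x}(\phi_x)\subseteq G$. By the Hausdorff continuity there is a neighborhood $N_x$ of $x$ in $\cale_0$ such that every $x'\in N_x$ admits $\phi_{x'}\in I_{x'}^{m+\eta}$ with $\rho(\phi_{x'},\phi_x)<r_x$, whence $\overline{B}_{r_x}(\phi_{x'})\subseteq\overline{B}_{2r_x}(\phi_x)\subseteq G$. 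Covering $\cale_0$ by $N_{x_1},\dots,N_{x_k}$ and setting $\delta=\min_i r_{x_i}>0$, for every $x'\in\cale_0$ there is $\phi_{x'}\in I_{x'}^{m+\eta}$ with $\overline{B}_{\delta}(\phi_{x'})\subseteq G$, so $P(X_{x'}^\eps\in G)\ge P(\rho(X_{x'}^\eps,\phi_{x'})<\delta)$ and the Freidlin-Wentzell lower bound with parameters $s=m+\eta$ and $\delta$ gives $\liminf_{\eps\to0}\inf_{x'\in\cale_0}\eps\ln P(X_{x'}^\eps\in G)\ge-(m+2\eta)$; let $\eta\downarrow 0$. \emph{Upper bound.} Given a closed set $F$, put $s^*=\inf_{x\in\cale_0}\inf_{\phi\in F}I_x(\phi)$; the case $s^*=0$ is immediate since $\eps\ln P\le 0$, and $s^*=\infty$ follows from the argument below with $s^*$ replaced by an arbitrary $M>0$. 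For $0<\eta<s^*$ the key point is that $\delta:=\inf_{x\in\cale_0}{\rm dist}(F,I_x^{s^*-\eta})>0$; otherwise there would be $x_m\to x_*$, $\phi_m\in F$ and $\psi_m\in I_{x_m}^{s^*-\eta}$ with $\rho(\phi_m,\psi_m)\to 0$, and by (a) (applied with the level $s^*-\eta$) a subsequence of $(\psi_m)$ would converge to some $\psi_*\in I_{x_*}^{s^*-\eta}$, giving $\phi_m\to\psi_*\in F$ and hence $I_{x_*}(\psi_*)\ge s^*>s^*-\eta$, a contradiction. Then $\{X_x^\eps\in F\}\subseteq\{{\rm dist}(X_x^\eps,I_x^{s^*-\eta})\ge\delta\}$, and the Freidlin-Wentzell upper bound with $s_0=s^*-\eta$ yields $\limsup_{\eps\to0}\sup_{x\in\cale_0}\eps\ln P(X_x^\eps\in F)\le-(s^*-\eta)$; let $\eta\downarrow 0$.

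For the converse ``DZ-ULDP $\Rightarrow$ FW-ULDP'' I would localize in $x$. For the lower bound, fix $s,\delta>0$ and cover the compact set $\calk_s$ by finitely many open balls $B_{\delta/2}(\psi_1),\dots,B_{\delta/2}(\psi_M)$; for any $x$ and any $\phi\in I_x^s$ one has $\phi\in B_{\delta/2}(\psi_j)\subseteq B_\delta(\phi)$ for some $j$, so $P(\rho(X_x^\eps,\phi)<\delta)\ge P(X_x^\eps\in B_{\delta/2}(\psi_j))$, and one applies the Dembo-Zeitouni lower bound to each of the finitely many fixed balls $B_{\delta/2}(\psi_j)$, combining it with fact (b) and a further covering of $\cale_0$ by neighborhoods of reference points, on which the level sets are nearly frozen, to match the rate quantity appearing in that bound with $I_x(\phi)$ up to an error $o(1)$. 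For the upper bound, fix $s_0,\delta>0$ and partition $[0,s_0]$ by $0=t_0<\cdots<t_\ell=s_0$ with mesh $<\eta$; since $I_x^{t_{j-1}}\subseteq I_x^{s}$ for $s\in[t_{j-1},t_j]$ one has $\{{\rm dist}(X_x^\eps,I_x^s)\ge\delta\}\subseteq\{{\rm dist}(X_x^\eps,I_x^{t_{j-1}})\ge\delta\}$ and $s\le t_{j-1}+\eta$, so the supremum over $s$ reduces to the finitely many levels $t_j$; covering $\cale_0$ by neighborhoods on which $I_x^{t_j}$ stays within Hausdorff distance $\delta/2$ of $I_{x_i}^{t_j}$ replaces the event by $\{X_x^\eps\in F_{i,j}\}$ with $F_{i,j}=\{\phi:{\rm dist}(\phi,I_{x_i}^{t_j})\ge\delta/2\}$ a fixed closed set satisfying $\inf_{\phi\in F_{i,j}}I_{x_i}(\phi)>t_j$ by the goodness of $I_{x_i}$, and the Dembo-Zeitouni upper bound applied to each $F_{i,j}$ completes the estimate. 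I expect this converse direction to be the main obstacle, and within it the step of reconciling the infima over $x\in\cale_0$ that appear in the Dembo-Zeitouni bounds with the individual rate $I_x(\phi)$ (respectively the level $s$) demanded by the Freidlin-Wentzell formulation; this reconciliation is available only because the Hausdorff continuity of the level sets prevents $I_{x'}^s$ from drifting away from $I_x^s$ as $x'$ ranges over a small neighborhood of $x$, so that on such a neighborhood the level sets are essentially constant and the infima over $x$ collapse, up to errors vanishing with the mesh, to their values at the reference points.
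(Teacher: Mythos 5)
The paper gives no proof of this proposition at all -- it is quoted verbatim from \cite[Theorem 2.7]{sal2} -- so the only comparison available is with Salins' argument. Your first half (Freidlin--Wentzell $\Rightarrow$ Dembo--Zeitouni) is correct and is essentially the standard route: your facts (a) and (b), the covering of $\cale_0$ by neighborhoods $N_{x_i}$ to produce a single radius $\delta$ with $\overline{B}_\delta(\phi_{x'})\subseteq G$ and $\phi_{x'}\in I_{x'}^{m+\eta}$, and the positive-gap claim $\inf_{x}{\rm dist}(F, I_x^{s^*-\eta})>0$ for the closed-set bound are exactly what is needed, and this is in fact the only direction the present paper uses (Theorem \ref{main1} is deduced from Theorem \ref{main} through it).

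The converse direction, however, has a genuine gap, and it sits precisely at the step you yourself flag. The Dembo--Zeitouni ULDP as defined here is a pair of bounds in which $\inf_{x\in\cale_0}$ and $\sup_{x\in\cale_0}$ run over the \emph{whole} index set, and such bounds do not restrict to subfamilies: passing from $\cale_0$ to a subset $A$ shrinks $\sup_{x\in A}\inf_{\phi\in G}I_x(\phi)$ in the lower bound and shrinks $\inf_{x\in A}\inf_{\phi\in F}I_x(\phi)$ in the upper bound, so the restricted statements are strictly stronger and are not consequences of the hypothesis. Consequently your plan to ``cover $\cale_0$ by neighborhoods on which the level sets are nearly frozen and apply the DZ bounds there'' is not licensed: the only bounds you may invoke still contain $\sup_{x'\in\cale_0}\inf_{\phi'\in B_{\delta/2}(\psi_j)}I_{x'}(\phi')$ (resp. $\inf_{x'\in\cale_0}\inf_{F_{i,j}}I_{x'}$), and indices far from $x$ can make these quantities unrelated to $I_x(\phi)$ (resp. to the level $t_j$), no matter how finely you localize; Hausdorff continuity of $x\mapsto I_x^s$ says nothing about those far-away indices. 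Indeed, with DZ assumed only over the single compact set $\cale_0$ the implication is false: take $\cale_0=\{x_1,x_2\}$ discrete, $X^{\eps}_{x_2}=X^{\eps}_{x_1}$ a family satisfying a classical LDP with good rate $I_{x_1}$, and $I_{x_2}\equiv+\infty$; the DZ lower bound is vacuous and the DZ upper bound reduces to that of $I_{x_1}$, while the FW upper bound fails because $I^s_{x_2}=\emptyset$ forces $P({\rm dist}(X^{\eps}_{x_2},I^s_{x_2})\ge\delta)=1$. In \cite{sal2} the equivalence is formulated for uniformity over the collection of \emph{compact subsets} of $\cale_0$, and it is that stronger form of the DZ hypothesis -- applicable to singletons and to small compact sets such as $\{x_m\}_{m\ge m_0}\cup\{x\}$ along a sequence $x_m\to x$ -- which legitimizes exactly the localization your covering scheme requires. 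As written, your converse direction does not go through from the stated hypothesis; it becomes correct once the DZ property is assumed on every compact subset of $\cale_0$.
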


    In order to obtain
      the Freidlin-Wentzell
      uniform  LDP 
   of $\{X^\eps_x,
   \eps>0, x\in \cale_0\}$ by the weak convergence method, 
   we  assume  that  
  there exists a measurable map
  $\calg^0_x:    C([0,T], U) \to \cale$
  such that for every $N>0$ 
  and $\delta>0$,
 \be\label{convp}
 \lim_{\eps\to 0}
 \sup_{x\in \cale_0}
 \sup_{v\in 
 \cala_N}
 P
 \left (
 \rho
 \left (
 \calg_x^\eps \left (
 W+
 \eps^{-\frac 12}\int_0^\cdot
 v(s) ds
 \right ),
 \
 \calg_x^0
 \left (
 \int_0^\cdot v(s) ds
 \right )
 \right ) >\delta
  \right )
  =0.
 \ee
 In addition, we assume that
   for every $N<\infty$
   and $x\in \cale_0$, the
  set  
  \be\label{convp1}
  \left \{
  \calg^0_x  
  \left (\int_0^{\cdot} v(t) dt
  \right ):\
  v \in \overline{B}_N(L^2(0,T; l^2))
  \right \}
  \ \text{ 
  is a compact subset of }
  \  \cale.
  \ee
  For every $x\in \cale_0$,
  define $I_x: \cale \to [0, \infty]$ by,
  for any  $\phi \in \cale$,
 \be\label{convp2}
  I_x(\phi)= \inf 
  \left \{
  {\frac 12} \int_0^T \| v(t)\|^2_{l^2} dt:\
   v\in L^2(0,T;  {l^2})
   \ \text{such that}\  
  \calg^0_x \left (\int_0^\cdot v(t) dt\right ) =
  \phi
  \right \},
 \ee
  with the convention that
  the infimum of the  empty set
   is   $+\infty$.
   
   It follows from \eqref{convp1}
   that $I_x$  given by \eqref{convp2}
   is a good rate function on $\cale$.

   \begin{prop}\label{fwuldp}
   \cite[Theorem 2.13]{sal2}
   If \eqref{convp} and \eqref{convp1}
   hold, then  the  
    family $\{X^\eps_x, \eps>0,
 x\in \cale_0 \}$   satisfies 
 the Freidlin-Wentzell
 uniform LDP in  $\cale$
 with  
 rate functions $I_x$  given
 by \eqref{convp2} 
 uniformly on $\cale_0$. 
   \end{prop}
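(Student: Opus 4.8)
The plan is to derive the Freidlin--Wentzell uniform LDP from the \emph{equicontinuous uniform Laplace principle} through the equivalence of these two notions proved in \cite[Theorem 2.10]{sal2}, and to establish the latter via the Bou\'e--Dupuis variational representation for exponential functionals of the cylindrical Wiener process. It therefore suffices to show that for every bounded continuous $h:\cale\to\R$, writing
\be\label{laplace}
\Lambda^\eps_x(h)=-\eps\ln\E\left[\exp\left(-\tfrac1\eps\, h\big(X^\eps_x\big)\right)\right],
\ee
the functionals $x\mapsto\Lambda^\eps_x(h)$ converge as $\eps\to0$ to $x\mapsto\inf_{\phi\in\cale}\{h(\phi)+I_x(\phi)\}$, \emph{equicontinuously} in $x\in\cale_0$. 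The representation expresses $\Lambda^\eps_x(h)$ as
\be\label{varrep}
\Lambda^\eps_x(h)=\inf_{v\in\cala}\E\left[\tfrac12\int_0^T\|v(s)\|_{l^2}^2\,ds
+h\Big(\calg^\eps_x\big(W+\eps^{-\frac12}{\textstyle\int_0^\cdot} v(s)\,ds\big)\Big)\right],
\ee
the common starting point for both one-sided bounds; the good rate function property of $I_x$ given by \eqref{convp2} is immediate from \eqref{convp1}, as already observed after the statement.

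\emph{Laplace upper bound} (the bound $\limsup_{\eps\to0}\sup_{x\in\cale_0}(\Lambda^\eps_x(h)-\inf_\phi\{h(\phi)+I_x(\phi)\})\le0$). Fix $\delta>0$; for each $x\in\cale_0$ choose a \emph{deterministic} $v_x\in L^2(0,T;l^2)$ with $\calg^0_x(\int_0^\cdot v_x(s)\,ds)=\phi_x$ and $\tfrac12\int_0^T\|v_x\|_{l^2}^2\,ds+h(\phi_x)\le\inf_\phi\{h(\phi)+I_x(\phi)\}+\delta$; boundedness of $h$ lets us take $v_x\in\overline{B}_N(L^2(0,T;l^2))\subset\cala_N$ with $N=N(\|h\|_\infty,\delta)$. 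Using $v_x$ in \eqref{varrep} and invoking \eqref{convp} gives $\calg^\eps_x(W+\eps^{-1/2}\int_0^\cdot v_x)\to\phi_x$ in probability, uniformly over $\cale_0$; continuity and boundedness of $h$ then give $\limsup_{\eps\to0}\sup_{x\in\cale_0}\big(\Lambda^\eps_x(h)-\inf_\phi\{h(\phi)+I_x(\phi)\}\big)\le\delta$, and letting $\delta\downarrow0$ closes this direction.

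\emph{Laplace lower bound} (the bound $\liminf_{\eps\to0}\inf_{x\in\cale_0}(\Lambda^\eps_x(h)-\inf_\phi\{h(\phi)+I_x(\phi)\})\ge0$). For each $\eps$ and $x$ take a \emph{stochastic} $v^\eps_x\in\cala$ attaining the infimum in \eqref{varrep} up to an additive $\eps$; since $h$ is bounded, $\tfrac12\E\int_0^T\|v^\eps_x\|_{l^2}^2\,dt\le2\|h\|_\infty+1$, so a stopping-time truncation replaces $v^\eps_x$ by a control in $\cala_N$ with $N=N(\|h\|_\infty)$, at the price of an error that one must show vanishes as $\eps\to0$ uniformly in $x$ and in the minimizers. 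Then \eqref{convp} transfers $\calg^\eps_x(W+\eps^{-1/2}\int_0^\cdot v^\eps_x)$ to $\calg^0_x(\int_0^\cdot v^\eps_x)$, \eqref{convp1} confines the latter to a fixed compact subset of $\cale$ from which one extracts a convergent subsequence while the controls converge weakly in $L^2(0,T;l^2)$, and lower semicontinuity of $v\mapsto\tfrac12\int_0^T\|v\|_{l^2}^2$ together with continuity of $h$ yields the claimed bound.

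These arguments use only the single uniform-in-$x$ hypothesis \eqref{convp} and the uniform compactness \eqref{convp1}, so the rate of convergence of $\Lambda^\eps_x(h)$ does not depend on $x\in\cale_0$; this is precisely the equicontinuous uniform Laplace principle, which \cite[Theorem 2.10]{sal2} upgrades to the Freidlin--Wentzell uniform LDP in $\cale$ with rate functions $I_x$ given by \eqref{convp2}, uniformly on $\cale_0$. The main obstacle is the Laplace lower bound: the near-optimal controls $v^\eps_x$ carry only an $L^2$-moment bound rather than an almost-sure bound, so a priori they lie in no $\cala_N$; one must show that the truncation forcing them into $\cala_N$ perturbs \eqref{laplace} negligibly --- uniformly over $x\in\cale_0$ and over the family of minimizers --- and that the corresponding skeleton solutions converge along subsequences using \eqref{convp1}. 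This truncation-plus-tightness step, carried out uniformly in the initial data, is the technical heart of the argument.
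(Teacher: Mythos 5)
The paper itself gives no proof of Proposition \ref{fwuldp}: it is quoted verbatim from \cite[Theorem 2.13]{sal2}, so the comparison is with Salins' argument, which, like yours, runs through the Bou\'e--Dupuis variational representation and the equicontinuous uniform Laplace principle (EULP), upgraded to the Freidlin--Wentzell uniform LDP by \cite[Theorem 2.10]{sal2}. Your overall strategy is therefore the right one, but the step you yourself flag as the technical heart --- the Laplace lower bound, uniformly in $x$ --- is not actually closed by your plan, and as sketched it would not close. After truncating the near-optimal stochastic controls into $\cala_N$ (note the truncation error is uniform in $\eps$ and vanishes as $N\to\infty$, not ``as $\eps\to 0$''), you propose to extract convergent subsequences of skeleton solutions using \eqref{convp1} together with weak convergence of controls and lower semicontinuity. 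But \eqref{convp1} is compactness of $\{\calg^0_x(\int_0^\cdot v):v\in\overline{B}_N\}$ for each \emph{fixed} $x$; to prove a bound uniform over $\cale_0$ you must argue along sequences $x_n\in\cale_0$, for which these compact sets vary and no joint compactness is available ($\cale_0$ is not assumed compact). So the subsequence-plus-lower-semicontinuity scheme, which is the classical Budhiraja--Dupuis--Maroulas route for a fixed (or compact set of) initial data, does not deliver the uniformity claimed here from the stated hypotheses.

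The gap is avoidable, and this is precisely how the cited proof exploits the strengthened hypothesis: \eqref{convp} is assumed uniformly over \emph{stochastic} controls $v\in\cala_N$, so no tightness or weak convergence of controls is needed at all. For any $v\in\cala_N$ one has, $P$-a.s. and directly from the definition \eqref{convp2},
$$
\tfrac12\int_0^T\|v(\omega,t)\|_{l^2}^2\,dt
+h\!\left(\calg^0_x\!\left(\int_0^\cdot v(\omega,s)\,ds\right)\right)
\ \ge\ \inf_{\phi\in\cale}\left\{h(\phi)+I_x(\phi)\right\},
$$
so after the truncation it suffices to replace $\calg^\eps_x(W+\eps^{-1/2}\int_0^\cdot v)$ by $\calg^0_x(\int_0^\cdot v)$ inside the expectation of $h$, at a cost controlled by \eqref{convp}, the modulus of continuity of $h$, and $\|h\|_\infty$ --- an estimate that is automatically uniform in $x$. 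This also exposes a second imprecision in your sketch: both bounds require bounded \emph{uniformly} continuous (or Lipschitz) test functions, which is how the EULP is formulated and is known to suffice; with merely continuous $h$ your upper bound's passage from convergence in probability (uniform in $x$) to convergence of $\E\,h$ (uniform in $x$) is not justified, since the limits $\phi_x$ range over a possibly non-compact set. In this argument \eqref{convp1} is needed only to guarantee that each $I_x$ in \eqref{convp2} is a good rate function, as required by the definition of the Freidlin--Wentzell uniform LDP and by the equivalence theorem you invoke.
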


   We will utilize Proposition
   \ref{fwuldp} to establish
   the uniform LDP
   of the stochastic equation
   \eqref{intr1}-\eqref{intr3}
   in the space
   $C([0,T], H)
   \bigcap L^2(0,T; V)
   \bigcap L^p
   (0,T; L^p(\R^n))$
   in the last section.

\section{Existence and uniqueness of solutions} 
\setcounter{equation}{0}

This section is devoted
to the existence and uniqueness
of solutions of \eqref{intr1}-\eqref{intr3}
under certain assumptions on the
nonlinear drift $F$  and the nonlinear
diffusion $\sigma$.

 Given $\alpha\in (0,1)$
 and $u\in L^2(\R^n)$, denote by  
$$
(-\Delta )^\alpha u
= 
  {\mathcal{F}^{-1}}
(|\xi|^{2 \alpha} (\mathcal{F} u)), \quad \xi \in \R^n,
$$
where
${\mathcal{F}}$    and 
${\mathcal{F}}^{-1}$ are 
the Fourier transform   and the 
inverse 
 Fourier transform, respectively.
  Let
  $H^ \alpha (\R^n)$ 
 be the 
  the  fractional Sobolev space
   as given by
  $$
 H^ \alpha (\R^n)
 =\left \{
 u\in L^2(\R^n): (-\Delta)^{\frac {\alpha}2}
 u \in L^2(\R^n) \right \},
  $$
  with norm
  $$
 \| u\|_{H^ \alpha (\R^n)}^2
 = \| u\|^2_{L^2(\R^n)}
 +   \| (-\Delta)^{{\frac  \alpha2}} u \|^2_{L^2(\R^n)}, 
$$
and  inner product
 $$
 (u, v)_{H^ \alpha (\R^n)}
 =
  \int_{\R^n} u(x)v(x)  dx 
  +
  \int_{\R^n}
  \left ( (-\Delta)^{{\frac  \alpha2}} u (x)
  \right )
  \left (
    (-\Delta)^{{\frac  \alpha2}} v (x)
    \right ) dx,
    $$ 
  for all
  $u, v\in H^\alpha
  (\R^n)$.
  By 
    \cite{dine1}  we find that
 the norm and the  inner product of
$H^ \alpha  ({\R^n})$ can be  rewritten as: 
   $$
 \| u\|^2_{H^ \alpha ({\R^n}) }
 = 
 \int_{\R^n} |u(x)|^2 dx 
  +
  {\frac {1}2}C(n,\alpha)
  \int_{\R^n} \int_{\R^n}
 {\frac {|u(x)- u(y)|^2} {|x-y|^{n+2 \alpha} }} dxdy , 
$$
and   
 $$
 (u, v)_{H^ \alpha (\R^n)}
 =
  \int_{\R^n} u(x)v(x)  dx 
  +  {\frac {1}2}C(n,\alpha)
  \int_{\R^n} \int_{\R^n}
 {\frac {(u(x)- u(y)) (v(x) -v(y))} {|x-y|^{n+2 \alpha} }} dxdy, 
 $$
for
  all $u, v \in H^ \alpha  ({\R^n})$, 
  where  
    $C(n,\alpha) 
 =\frac{\alpha 4^\alpha\Gamma(\frac{n+2\alpha}{2})}
{\pi^\frac{n}{2}\Gamma(1-\alpha)}
$ is a positive number.

 For convenience, we write  
$H= 
  L^2(\R^n) $ 
  with norm  
  $\| \cdot \|$  and 
  inner product $(\cdot, \cdot)$.
  Denote by 
$V= 
H^ \alpha (\R^n) $.
Then we have
$V \hookrightarrow H  = H^* \hookrightarrow V^*$
where $H^*$   and $V^*$ are the dual spaces
of $H$  and $V$, respectively.
 The space of Hilbert-Schmidt
   operators  between   
   separable 
   Hilbert spaces  $H_1$ and $H_2$
   is written
   as   $\call_2(H_1,H_2)$,
   and the corresponding norm
   is written as 
   $\| \cdot \|_{\call_2(H_1,H_2)}$.

      We now discuss 
      assumptions on the nonlinear drift
      $F$ in \eqref{intr1}.
We  assume  
$F: \R \times \o \times \R$
$\to \R$ is      continuous     
 such that  for all
$t, u, u_1, u_2  \in \R$   and  $x \in \o$, 
 \be 
\label{f1}
F (t, x, u) u
\ge \lambda_1 |u|^p -\psi_1(t,x),
\ee
\be 
\label{f2}
|F(t, x, u) |   \le
 \ \psi_2 (t,x)
  |u|^{p-1}  + \psi_3 (t,x),
\ee
$$
(F(t,x, u_1)
-F(t,x, u_2))(u_1-u_2) 
$$
\be 
\label{f3}
\ge
\lambda_2
(|u_1|^{p-2}u_1-|u_2|^{p-2}u_2)(u_1-u_2)
- \psi_4(t,x )|u_1-u_2|^2,  
\ee
where  $\lambda_1>0$,
  $\lambda_2>0$ and
   $ p>  2$ are constants, 
$\psi_1 \in L^1_{loc} (\R, L^1(\R^n))$,
$\psi_2, \psi_4  \in L_{loc}^\infty 
(\R, L^\infty(\R^n))$, and
$\psi_3\in 
  L_{loc}^{\frac p{p-1}}(\R, L^
  {\frac p{p-1}}
  (\R^n))$.
  
       To deal with the nonlinearity $F$, we
       will  frequently
       use the following  elementary inequalities:
       for any $u_1, u_2\in \R$ and $p>2$,
       \be
       \label{elity1}
        \left (
       |u_1|^{p-2} u_1
       - |u_2|^{p-2} u_2
       \right )
       \left (u_1
       -u_2 \right )
       \ge
       2^{1-p}  | u_1-u_2 |^p,
       \ee
       and
        \be
       \label{elity2}
        \left (
       |u_1| ^{p-2} u_1
       - |u_2|^{p-2} u_2
       \right )
       \left (u_1
       -u_2 \right )
       \ge
       {\frac 12}
       \left (
       |u_1|^{p-2}
       +|u_2|^{p-2}
       \right )
        \left (u_1-u_2  \right )^2 .
       \ee

Let
        $\sigma:
      \R \times \R^n \times \R
      \to l^2$  be a mapping as defined by   
  \be\label{sig1}
     \sigma (t,x, u)
     = \sigma_1  (t,x)
     +   
       \kappa (x)   \sigma_{2} ( t,x, u)  
        , 
       \quad \forall \ t\in \R, \ x\in \R^n, \ u\in \R,
\ee
     where     
   $\kappa \in    L^2(\R^n) \bigcap L^\infty(\R^n)$,
   $\sigma_1\in L^\infty_{loc}
   (\R,  L^2(\R^n, l^2 ))$,
   and 
    $ 
     \sigma_2 (t,x, u)
     =\left \{ 
        \sigma_{2,k} ( t,x, u)  \right \}_{k=1}^\infty
      $ for $t\in \R$,
      $x\in \R^n$ and $u\in \R$.
      We    further assume that  
    for each $k\in \N$,
    $\sigma_{2,k}: \R \times \R^n \times \R
    \to \R$ is measurable
    and 
   there exist  positive numbers $\alpha_k$, $\beta_k$ and $\gamma_k$
   such that   for all 
   $t\in \R$,
   $x\in \R^n$  and $u, u_1, u_2 \in \R$,
    \be\label{sig2}
   |\sigma_{2,k} (t,x, u_1) -\sigma_{2,k} (t,x,u_2) |^2
   \le \alpha_k \left (1 + |u_1|^{q-2}
   + |u_2|^{q-2} \right ) |u_1  -u_2 |^2,
   \ee
   \be\label{sig3}
   |\sigma_{2,k}  (t,x,u)  |^2
   \le \beta_k    +\gamma _k  |u|^q,
   \ee
   where $q\in [2,  1+{\frac p2}]$ and
  \be\label{sig4}
  \sum_{k=1}^\infty  ( \alpha_k
   +\beta_k  + \gamma_k  ) <\infty .
     \ee
     
     Note that  $\sigma_{2,k}$ has a  superlinear growth up to order ${\frac 12} + {\frac p4}$
     in its last argument for every $k\in \N$.
       Let 
     $\sigma_1 (t,x)
     =\left \{ \sigma_{1,k} (t,x)\right \}_{k=1}^\infty
     \in l^2$
     for $t\in \R$ and  $x\in \R^n$.
     Then by \eqref{sig1}  we have
    \be\label{sig5}
     \sigma   (t,x,u) 
     = 
     \left \{ \sigma_{1,k}  (t,x)
     +   
       \kappa (x)   \sigma_{2,k} ( t,x, u)  
       \right \}_{k=1}^\infty
        , 
       \quad \forall \ t\in \R, \ x\in \R^n, \ u \in \R.
  \ee
  By \eqref{sig3} and \eqref{sig5} we infer that
 for every $\eps>0$,   $u\in L^p(\R^n)$ and $t\in \R$,  
  $$
  \| \sigma (t, \cdot, u)\|^2_{L^2(\R^n, l^2)}
  =\sum_{k=1}^\infty
  \int_{\R^n}
  \left |  \sigma_{1,k}  (t,x)
     +   
       \kappa (x)   \sigma_{2,k} ( t,x, u(x) ) 
       \right |^2 dx
  $$
 $$
  \le
  2 \sum_{k=1}^\infty 
  \|\sigma_{1, k} (t) \|^2
  + 2 
  \sum_{k=1}^\infty \beta_k \| \kappa \|^2
  +2\sum_{k=1}^\infty
  \gamma_k\int_{\R^n} \kappa^2(x) |u(x)|^q dx
 $$
 \be\label{sig5a}
  \le
  2 \sum_{k=1}^\infty 
  \|\sigma_{1, k} (t) \|^2
  + 2 
  \sum_{k=1}^\infty \beta_k \| \kappa \|^2
  +2\sum_{k=1}^\infty
  \gamma_k \|\kappa\|_{L^{\frac {2p}{p-q}} 
  (\R^n) }^2
  \|u \|^q_{L^p(\R^n)} 
\ee
 \be\label{sig6}
  \le
  \eps  \|u \|^p_{L^p(\R^n)} 
  +
  2 \|\sigma_1 (t)\|_{L^2(\R^n, l^2)}^2
   +C(\eps),
  \ee
  where
  $C(\eps)>0$ depends only
  on $\eps$, $p, q$, $\kappa$, $\sum_{k=1}^\infty
  {\beta_k}$
  and $\sum_{k=1}^\infty \gamma_k$.
  
  On the other hand, 
 by \eqref{sig2}
 and \eqref{sig5}, we find that for every 
   $\eps>0$, 
 $u_1,u_2\in
L^2(\mathbb{R}^n)\cap L^p(\mathbb{R}^n)$
and $t\in \R$,
$$
 \|\sigma(t, \cdot, u_1)-
\sigma(t, \cdot,u_2)\|^2_{L^2(\mathbb{R}^n,l^2)}
 = \sum_{k=1}^\infty 
 \int_{\mathbb{R}^n}
|\kappa (x)|^2|\sigma_{2,k} (t,
x,u_1(x))-\sigma_{2,k}(t, x,u_2(x))|^2dx
$$
$$
\leq   \sum_{k=1}^\infty \alpha_k
\|\kappa \|^2_{L^\infty (\R^n)}
 \int_{\mathbb{R}^n} 
   \big( 1+ |u_1(x)|^{q-2}  
+|u_2(x)|^{q-2}
\big)|u_1(x)-u_2(x)|^2dx
 $$
 \be\label{sig7}
 \le
 C(\eps)
 \|u_1-u_2\|^2
 +\eps
 \int_{\mathbb{R}^n} 
   \big(   |u_1(x)|^{p-2}  
+|u_2(x)|^{p-2}
\big)|u_1(x)-u_2(x)|^2dx,
\ee
 where
  $C(\eps)>0$ depends only
  on $\eps$, $p, q$, $\kappa$ and  $\sum_{k=1}^\infty
  {\alpha_k}$.
  
  Similarly, we see that 
 for every    $u\in L^2(\R^n)
 \bigcap  L^p(\R^n)$ and $t\in \R$,  
  $$
  \| \sigma (t, \cdot, u)\|^2_{L^2(\R^n, l^2)}
  \le
  2 \sum_{k=1}^\infty 
  \|\sigma_{1, k} (t) \|^2
  + 2 
  \sum_{k=1}^\infty \beta_k \| \kappa \|^2
  +2\sum_{k=1}^\infty
  \gamma_k\int_{\R^n} \kappa^2(x) |u(x)|^q dx
 $$
 $$
  \le
  2 \sum_{k=1}^\infty 
  \|\sigma_{1, k} (t) \|^2
  + 2 
  \sum_{k=1}^\infty \beta_k \| \kappa \|^2
  +2\sum_{k=1}^\infty
  \gamma_k \|\kappa\|_{\frac {4p}{p- 2(q-1) }}^2
  \|u \|^{q-1} _{L^p(\R^n)}  \| u \|
$$
$$
 \le
  2 \sum_{k=1}^\infty 
  \|\sigma_{1, k} (t) \|^2
  + 2 
  \sum_{k=1}^\infty \beta_k \| \kappa \|^2
  +2\sum_{k=1}^\infty
  \gamma_k \|\kappa\|_{\frac {4p}{p- 2(q-1) }}^2
  \left ( {\frac {p-2(q-1)}p}
  + {\frac {2(q-1)}p}
  \|u \|^{\frac p2} _{L^p(\R^n)}  
  \right )\| u \|
  $$
   \be\label{sig8}
  \le
   2 \|\sigma_1 (t)\|_{L^2(\R^n, l^2)}^2
   +  2 
  \sum_{k=1}^\infty \beta_k \| \kappa \|^2
   +C 
   \left (
   1+    \|u \|^{\frac p2} _{L^p(\R^n)} 
   \right ) \| u \|,
  \ee
  where
  $C >0$ depends only
  on   $p, q$, $\kappa$ 
  and $\sum_{k=1}^\infty \gamma_k$.
  
  Note that  for any
 $u_1,u_2\in
L^2(\mathbb{R}^n)\cap L^p(\mathbb{R}^n)$
and $t\in \R$,
$$
 \|\sigma(t, \cdot, u_1)-
\sigma(t, \cdot,u_2)\|^2_{L^2(\mathbb{R}^n,l^2)} 
\le   \sum_{k=1}^\infty \alpha_k
\int_{\R^n} \kappa^2 (x)
 |u_1(x)  -u_2(x)  |^2  dx
$$
$$
+  \sum_{k=1}^\infty \alpha_k 
 \int_{\mathbb{R}^n} \kappa^2 (x)
   \big(   |u_1(x)|^{q-1}  
+|u_2(x)|^{q-1}
+ |u_1 | \ | u_2|^{q-2}
 + |u_2 | \ | u_1|^{q-2}
\big)|u_1-u_2|dx
 $$
 $$
 \le   \sum_{k=1}^\infty \alpha_k
 \int_{\R^n} \kappa^2 (x)
 \left ( |u_1(x) |
 + |  u_2(x)  | \right ) |u_1(x)
 -u_2 (x)| dx
$$
$$
+   2\sum_{k=1}^\infty \alpha_k 
 \int_{\mathbb{R}^n}  \kappa^2 (x) 
   \big(   |u_1(x)|^{q-1}  
+|u_2(x)|^{q-1} 
\big)|u_1(x)-u_2(x) |dx
 $$
 $$
 \le   \sum_{k=1}^\infty \alpha_k 
 \left ( \| \kappa^2(\cdot) u_1 \|
 +
 \| \kappa^2(\cdot) u_2 \|
 \right )  \| u_1 
 -u_2 \|
$$
$$
+   2\sum_{k=1}^\infty \alpha_k 
 \left (
 \|  \kappa^2 (\cdot )  
    |u_1 |^{q-1}  \|
    + \|  \kappa^2  (\cdot) 
    |u_2 |^{q-1}  \|
    \right )  
 \|u_1-u_2  \|
 $$
  $$
 \le   \sum_{k=1}^\infty \alpha_k
 \|\kappa\|^2_{\frac {4p}{p-2}}
 \left (\|u_1\|_{L^p(\R^n)}
 +
 \|u_2\|_{L^p(\R^n)}
 \right )  \|u_1 
 -u_2  \|
$$
$$
+   2\sum_{k=1}^\infty \alpha_k 
\|\kappa\|^2 _{L^{\frac {4p}
{p-2(q-1)}} (\R^n)}
 \left ( 
    \| u_1 \|^{q-1} _{L^p(\R^n)}
    + 
    \| u_2 \|^{q-1} _{L^p(\R^n)}  
    \right )  
 \|u_1-u_2  \|
 $$
  $$
 \le   2p^{-1} \sum_{k=1}^\infty \alpha_k
  \|\kappa\|^2_{\frac {4p}{p-2}} 
 \left (  p-2
 + 
 \| u_1 \|_{L^p(\R^n)}^{\frac p2}
 + 
 \| u_2 \|_{L^p(\R^n)}^{\frac p2}
  \right ) \| u_1 
 -u_2 \|
$$
$$
+  {\frac  
{4(q-1)}p} 
\sum_{k=1}^\infty \alpha_k 
\|\kappa\|^2 _{L^{\frac {4p}
{p-2(q-1)}} (\R^n)}
 \left (  {\frac {p-2(q-1)}{q-1}}   
 +
     \| u_1 \|^{\frac p2} _{L^p(\R^n)}
    + 
    \| u_2 \|^{\frac p2} _{L^p(\R^n)}  
    \right )  
 \|u_1-u_2  \|
 $$
 \be\label{sig9}
  \le C 
   \left ( 1
 + 
 \| u_1 \|_{L^p(\R^n)}^{\frac p2}
 + 
 \| u_2 \|_{L^p(\R^n)}^{\frac p2}
  \right ) \| u_1 
 -u_2 \|,
\ee
 where
  $C >0$ depends only
  on   $p, q$, $\kappa$ and  $\sum_{k=1}^\infty
  {\alpha_k}$.

  Given $u\in L^p(\R^n)$  and $t\in \R$, define an operator
  $\sigma (t, u): l^2\to H$ by
\be\label{sig10}
  \sigma (t,u) (v) (x)
  =\sum_{k=1}^\infty
  \left (
   \sigma_{1,k}  (t,x)
     +   
       \kappa (x)   \sigma_{2,k} ( t,x, u(x) )
       \right )
         v_k,
  \quad \forall \ 
   v=\{v_k\}_{k=1}^\infty \in l^2,
   \ \ x\in \R^n.
\ee
By \eqref{sig6}  we find that
for every $u\in L^p(\R^n)$ and $t\in \R$,
  $\sigma (t, u): l^2\to H$ is a   Hilbert-Schmidt 
  operator 
   with norm
 $$
 \|  \sigma (t,u)   \|^2_{\call_2 (l^2, H)}
 =
   \| \sigma (t, \cdot, u)\|^2_{L^2(\R^n, l^2)} .
$$ 
In addition, we have
for any $u_1, u_2\in L^p(\R^n)$ and $t\in \R$, 
  $$
 \|  \sigma (t,u_1) -
 \sigma (t,u_2)  
   \|^2_{\call_2 (l^2, H)}
 =\|  \sigma (t, \cdot, u_1) -
 \sigma (t, \cdot, u_2)   \|^2_{L^2(\R^n, l^2)} .
$$

%
%
%

  By the definition of  the Hilbert-Schmidt
  operator $\sigma$, we can reformulate
   system \eqref{intr1}-\eqref{intr3}
   as follows:
  \be
  \label{sys1}
  du^{\eps} (t)
  + (-\Delta)^ \alpha  u^{\eps} (t)  dt
  + F(t,x, u^{\eps}(t)) dt
  $$
  $$
  =  
    g(t,x)   dt
  +\sqrt{\eps} 
  \sum_{k=1}^\infty
  \left (
  \sigma_{1,k}  (t,x)
     +   
       \kappa (x)   \sigma_{2,k} ( t,x, u^{\eps}(t) ) 
       \right ) dW_k
   ,
  \ee 
  with   initial condition 
 \be\label{sys2}
 u^{\eps}( 0, x ) = u_0 (x),   \quad x\in \R^n,
 \ee 
  where $\{W_k\}_{k=1}^\infty$
  is a sequence of independent  real-valued
  standard 
  Wiener processes.

 \begin{defn}
 \label{defnsol}
Suppose 
      $u_0 \in L^2(\Omega, \calf_0; H)$
      and $u^{\eps}$ is 
   a continuous $H$-valued $\calf_t$-adapted
   process such that 
\be\label{defnsol_1}
 u^{\eps}\in L^2(\Omega, C([0, T], H))
 \bigcap L^2(\Omega, L^2(0, T; V))
 \bigcap L^p(\Omega, L^p(0, T; L^p(\R^n))).
\ee
  If  for  all $t\in [0,T]$
 and $\xi \in V\bigcap L^p(\R^n)$,
 $P$-almost surely,
\be\label{defnsol_2}
 ( u^{\eps}(t), \xi )
 + \int_0^t
 ( (-\Delta )^{\frac \alpha 2} u^{\eps}(s) , (-\Delta )^{\frac \alpha 2} \xi) ds
 +
 \int_0^t\int_\o F(s, x, u^{\eps}(s)) \xi (x) dx  ds
 $$
 $$
 =(u_0, \xi)
    +\int_0^t (g(s),  \xi  ) ds
  +\sqrt{\eps} \int_0^t
  ( \xi , 
  \sigma (s, u^{\eps}(s)) dW ),
\ee
then $u$ is called a solution of \eqref{intr1}-\eqref{intr3}.
 \end{defn}
 
 Notice that 
   if $u^{\eps}$ is a solution of \eqref{intr1}-\eqref{intr3}
   in the sense of Definition \ref{defnsol},
  then  
  for all  $t\in [0,T]$,  
\be\label{defnsol_3}
   u^{\eps}(t) 
 + \int_0^t
   (-\Delta )^{  \alpha  } u^{\eps}(s)  ds
 +
 \int_0^t  F(s, \cdot , u^{\eps}(s))    ds
 =u_0
   +\int_0^t  g(s)  ds
  + \sqrt{\eps} \int_0^t
  \sigma (s, u^{\eps}(s)) dW
\ee
 in $ (V\bigcap L^p(\R^n) )^*$, $P$-almost surely.

 The existence and uniqueness of solutions of
 \eqref{intr1}-\eqref{intr3}
 is given below.

 \begin{lem} \label{le32}
 If
 \eqref{f1}-\eqref{f3}
  and \eqref{sig1}-\eqref{sig4}
  hold, then for every
    $T>0$,
  $\eps\in (0,1)$
  and  $u_0\in L^2(\Omega,\calf_0; H)$,
  there exists a unique solution
  $u^\eps$  to   \eqref{intr1}-\eqref{intr3}
 such that 
 $$
\| u^{\eps}  \|_{L^2(\Omega, C([0, T], H) ) }^2
+
\| u ^{\eps}  \|_{L^2 (\Omega, L^2(0, T;  V ) )}^2
+
\| u ^{\eps} \| _{L^p(\Omega, L^p(0, T;  L^p(\R^n) ) )}^p
\le L_1
\left (1+ 
\| u_0 \|^2_{L^2(\Omega, H)} 
\right ),
$$
where $L_1>0$ is a constant
depending only on $T$,  but not on
 $u_0$ or $\eps$.
 \end{lem}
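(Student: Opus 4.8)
The plan is to construct the solution as a limit of solutions on expanding bounded balls $\calb_m = \{|x| < m\} \subset \R^n$, where the Sobolev embeddings are compact and the existence theory for the fractional reaction-diffusion equation with superlinear multiplicative noise follows from standard monotonicity/compactness arguments (e.g., a Galerkin scheme together with the It\^o formula and the Skorokhod theorem, or the variational approach of Liu-R\"ockner). On $\calb_m$ impose the homogeneous Dirichlet condition (the restriction of the regional fractional Laplacian), extend $u_0$, $g$, $F$, $\sigma$ by their values on $\R^n$, and call the resulting solution $u^{\eps}_m$, defined to be zero outside $\calb_m$. The superlinear growth of $\sigma$ is handled exactly as in the preliminary estimates: for the $H$-norm, apply It\^o's formula to $\|u^{\eps}_m(t)\|^2$, use the dissipativity \eqref{f1} to produce a term $-2\lambda_1 \|u^{\eps}_m\|_{L^p}^p$, and use \eqref{sig6} with $\eps$ chosen small (relative to $\lambda_1$) so that the $\eps\|u\|^p_{L^p}$ term coming from $\|\sigma(t,u^{\eps}_m)\|^2_{\call_2}$ is absorbed. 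The martingale term is controlled by the Burkholder-Davis-Gundy inequality, again absorbing a small multiple of $\|u\|^p_{L^p}$ and of $\sup_{s\le t}\|u\|^2$. This yields the a priori bound
$$
\E\Big(\sup_{0\le t\le T}\|u^{\eps}_m(t)\|^2\Big)
+\E\int_0^T\|u^{\eps}_m(s)\|^2_V\,ds
+\E\int_0^T\|u^{\eps}_m(s)\|^p_{L^p}\,ds
\le L_1\big(1+\|u_0\|^2_{L^2(\Omega,H)}\big),
$$
with $L_1$ independent of $m$ and $\eps$, by a Gronwall argument (note $\psi_1\in L^1_{loc}$, $\sigma_1\in L^\infty_{loc}$, $g\in L^2_{loc}$).

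Next I would pass to the limit $m\to\infty$. The uniform bound gives, along a subsequence, weak-$\ast$ limits $u^{\eps}_m \rightharpoonup u^{\eps}$ in $L^2(\Omega,L^2(0,T;V))$ and in $L^p(\Omega,L^p(0,T;L^p(\R^n)))$, weak-$\ast$ in $L^2(\Omega;L^\infty(0,T;H))$, $F(\cdot,\cdot,u^{\eps}_m)\rightharpoonup \chi$ in $L^{p/(p-1)}(\Omega,L^{p/(p-1)}(0,T;L^{p/(p-1)}(\R^n)))$ (using \eqref{f2}), and $\sigma(\cdot,u^{\eps}_m)\rightharpoonup \Sigma$ in $L^2(\Omega\times(0,T);\call_2(l^2,H))$ (using \eqref{sig6}). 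The monotonicity structure \eqref{f3} combined with inequality \eqref{elity1} is what lets me identify $\chi=F(\cdot,\cdot,u^{\eps})$ and $\Sigma=\sigma(\cdot,u^{\eps})$ by the classical Minty-Browder monotonicity trick: pair the limiting equation with $u^{\eps}$, use the It\^o formula for $\|u^{\eps}(t)\|^2$, and test the difference $u^{\eps}_m - \phi$ against the monotone operator for arbitrary $\phi\in L^p(\Omega\times(0,T);V\cap L^p)$; the localization to $\calb_m$ is harmless because any fixed test function is eventually supported in $\calb_m$, and on $\calb_m$ the fractional gradient bilinear form for $u^{\eps}_m$ dominates (or equals, up to the nonnegative nonlocal interaction across $\partial\calb_m$) that of its zero-extension, which is exactly the direction needed so that $\liminf_m \langle (-\Delta)^\alpha u^{\eps}_m, u^{\eps}_m\rangle \ge \langle (-\Delta)^\alpha u^{\eps},u^{\eps}\rangle$. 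Here the crucial point is that one does \emph{not} need strong (compact) convergence on all of $\R^n$: monotonicity substitutes for compactness.

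For uniqueness, suppose $u^{\eps}$ and $v^{\eps}$ are two solutions and apply the It\^o formula to $\|u^{\eps}(t)-v^{\eps}(t)\|^2$. The drift difference contributes, via \eqref{f3} and \eqref{elity2}, a nonpositive principal term plus $\psi_4\|u^{\eps}-v^{\eps}\|^2$; the diffusion difference is estimated by \eqref{sig7}, and choosing $\eps$ small there lets the term $\eps\int(|u^{\eps}|^{p-2}+|v^{\eps}|^{p-2})|u^{\eps}-v^{\eps}|^2\,dx$ be absorbed by $\tfrac12(|u^{\eps}|^{p-2}+|v^{\eps}|^{p-2})|u^{\eps}-v^{\eps}|^2$ coming from \eqref{elity2} times $\lambda_2$. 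What remains is $\tfrac{d}{dt}\|u^{\eps}-v^{\eps}\|^2 \le C(t)\|u^{\eps}-v^{\eps}\|^2$ with $C\in L^1(0,T)$ (involving $\|\psi_4\|_{L^\infty}$ and $C(\eps)$), and Gronwall forces $u^{\eps}=v^{\eps}$ a.s. Continuity in $H$ of the solution, hence membership in $L^2(\Omega,C([0,T],H))$, comes from the It\^o formula for $\|u^{\eps}(t)\|^2$ together with the weak continuity in $H$ and the energy equality, by the standard Lions-Magenes argument.

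The main obstacle is the limit passage $m\to\infty$: because Sobolev embeddings on $\R^n$ are not compact, one cannot extract strongly convergent subsequences of $u^{\eps}_m$ on the whole space, so the nonlinear terms $F$ and $\sigma_2$ cannot be passed to the limit by a compactness/a.e.-convergence argument in the usual way. The resolution is to rely on the monotonicity of $F$ (inequalities \eqref{f3}, \eqref{elity1}) and the pseudo-monotone structure of the whole operator, so that the identification of the weak limits is purely algebraic; one must be careful that the truncation operator "restrict to $\calb_m$, then extend by zero" interacts correctly with the nonlocal energy form $(-\Delta)^{\alpha/2}$, which it does in the favorable direction as noted above. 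The superlinear growth of $\sigma$ is, by contrast, a routine difficulty once the $\eps$-splitting estimates \eqref{sig6}, \eqref{sig7} are in hand and $\eps$ is taken small against $\lambda_1$ and $\lambda_2$.
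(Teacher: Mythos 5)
Your proposal follows essentially the same route as the paper: approximate by solutions of exterior-Dirichlet problems on expanding balls (the paper invokes the monotone/variational argument of the cited work of Wang, Caraballo and Tuan both for existence on each ball and for the limit passage and uniqueness), and derive the stated bound by It\^o's formula, the dissipativity \eqref{f1} played against the splitting \eqref{sig6}, the Burkholder inequality and Gronwall's lemma, exactly as you outline. One minor remark: the paper poses the approximate problems in the subspaces $V_k\subset V$ of functions vanishing outside the ball, so the nonlocal form is the full-space one throughout and the boundary-interaction issue you raise for the regional fractional Laplacian does not arise.
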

  
  \begin{proof}
  Given
  $k\in \N$,
   let $\mathcal{O}_{k}=\{x\in\R^n: |x|< k\}$.
   We first consider the  solutions of the stochastic equation defined on
   $\mathcal{O}_k$ and
   then take the limit of the solutions
   as $k\to \infty$ to obtain a solution
   defined on the entire space $\R^n$.
   
   For every 
   $u_0\in L^2(\Omega,\calf_0; H)$ and 
    $k\in \N$, 
   let $u_{0,k} = u_0$ for
   $|x|<k$,  and $u_{0,k} =  0$ otherwise.
   Then consider the 
   following stochastic  problem
   on $\mathcal{O}_k$ 
   as an approximation of \eqref{sys1}-\eqref{sys2}:
   \be
  \label{sys1a}
  du^{\eps}_k (t)
  + (-\Delta)^ \alpha  u^{\eps}_k (t)  dt
  + F(t,x, u^{\eps}_k(t)) dt
  $$
  $$
  =  
    g(t,x)   dt
  +\sqrt{\eps} 
  \sum_{k=1}^\infty
  \left (
  \sigma_{1,k}  (t,x)
     +   
       \kappa (x)   \sigma_{2,k} ( t,x, u^{\eps}_k(t) ) 
       \right ) dW_k
   ,
  \ee 
  with boundary condition
  \be\label{sys2a}
 u^{\eps}_k( t, x ) = 0,   \quad x\in \R^n\setminus {\mathcal{O}}_k,
 \  t>0,
 \ee 
  and    initial condition 
 \be\label{sys3a}
 u^{\eps}_k( 0, x ) = u_{0,k} (x),   \quad x\in \R^n.
 \ee 
   
   Let $H_k$, $V_k$ and $L^p_k(\R^n)$
   be the subspaces of $H$, $V$ and $L^p(\R^n)$,
   respectively,
   which consist of functions satisfying the
   boundary condition \eqref{sys2a}.
   Then we have
   the embeddings
    $$V_k\subseteq H_k = H_k^*\subseteq V_k^*,
    \quad    L_k
    ^p(\R^n) \subseteq H_k =  H_k^*\subseteq 
    (L_k^p(\R^n))^*.
    $$
    By the monotone argument
    (see, e.g., \cite{rwan1}),
    we  find  that for every $k\in \N$,
    system \eqref{sys1a}-\eqref{sys3a}
    has a unique solution
           $u^\eps_k$  defined on $\mathcal{O}_k$
           which 
     is   a continuous $H_k$-valued
$\mathcal{F}_t$-adapted  process  with the property:
$$
u^\eps_k
 \in L^2(\Omega, C([0,T], H_k ))
 \bigcap L^2(\Omega, L^2(0,T;   V_k  )
 \bigcap L^p(\Omega, L^p(0, T;  L_k^p(\R^n)   ).
$$
Furthermore, 
for all $\xi\in V_k \bigcap L^p_k(\R^n)$,
$u^\eps_k$ satisfies the identity \eqref{defnsol_2}
with $u^\eps$ replaced by $u^\eps_k$.
Then by taking the limit of the sequence
$\{u^\eps_k\}_{k=1}^\infty$ in appropriate spaces
  as in \cite{rwan1},
we  can obtain a solution $u^\eps$ for \eqref{intr1}-\eqref{intr3}
in the sense of Definition \ref{defnsol}.
The uniqueness of solutions of 
\eqref{intr1}-\eqref{intr3} also follows from the argument
of
\cite{rwan1}, and the details   are omitted
here.

Next, we derive the uniform
estimates for the solution $u^\eps$.
By It\^{o}'s formula, we get from \eqref{intr1}-\eqref{intr3}
that for all $t\ge 0$, $P$-almost surely,
 $$
       \| {u}^{\eps}  (t) \|^2
       + 2\int_0^t \| (-\Delta)^{\frac \alpha{2}} {u}^{\eps}  (s)\|^2
       ds
       + 2\int_0^t \int_{\R^n}
        F (s, x, u^{\eps}(s) )   {u}^{\eps}  (s)    dx ds
       $$
       \be\label{energy1}
       = \|u_{0} \|^2 
                + 2 \int_0^t (u^{\eps} (s) , g (s) )ds
        +
       2\sqrt{\eps}
       \int_0^t 
     ( {u}^{\eps} (s) , 
        \sigma (s, u^{\eps}(s) )  dW )
       + \eps  \int_0^t \|  \sigma (s, u^{\eps}(s) )  \|^2_{\call_2(l^2,H)}
       ds.
       \ee
        By \eqref{sig6} we have
        for all $\eps\in (0,1)$,
       \be
       \label{le32 p2}
        \eps  \int_0^t \|  \sigma (s, u^{\eps}(s) )  \|^2_{\call_2(l^2,H)}
        \le
        {\frac 12}\lambda_1
        \int_0^t
        \int_{\R^n}
        |u^\eps (s,x)|^pdxds
        + 2\int_0^t
        \|\sigma_1 (s) \|^2_{L^2(\R^n,
        l^2)} ds
        + c_1t,
        \ee
        where $c_1>0$ is a constant
        depending only on
        $\lambda_1$, 
        $p,q, \kappa$, $\sum_{k=1}^\infty \beta_k$
        and $\sum_{k=1}^\infty \gamma_k$.
        It follows from 
        \eqref{f1} and \eqref{energy1}-\eqref{le32 p2}
        that for all $t\in [0,T]$,
        $$
       \| {u}^{\eps}  (t) \|^2
       + 2\int_0^t \| (-\Delta)^{\frac \alpha{2}} {u}^{\eps}  (s)\|^2
       ds
       + {\frac 32}\lambda_1
       \int_0^t \| u^\eps (s) \|^p_{L^p(\R^n)} ds
       $$
          $$
          \le
           \|u_{0} \|^2 
           +\int_0^t \|u^\eps (s)\|^2 ds
           +\|g\|^2_{L^2(0,T; H)}
           +    2  
        \|\sigma_1  \|^2_{L^2(0,T;   L^2(\R^n,
        l^2))}
        $$
        $$
        + c_1T
        +2 \|\psi_1\|_{L^1(0,T; L^1(\R^n))}
           +
       2\sqrt{\eps}
       \int_0^t 
     ( {u}^{\eps} (s) , 
        \sigma (s, u^{\eps}(s) )  dW ),
        $$
     and hence for all $0 \le t\le T$,
          $$
          \E
          \left (
          \sup_{0\le r \le t}
          \left (
       \| {u}^{\eps}  (r) \|^2
       + 2\int_0^r\| (-\Delta)^{\frac \alpha{2}} {u}^{\eps}  (s)\|^2
       ds
       + {\frac 32}\lambda_1
       \int_0^r\| u^\eps (s) \|^p_{L^p(\R^n)} ds
       \right )\right )
       $$
          $$
          \le  \E
          \left (
           \|u_{0} \|^2 
           \right )
           +\int_0^t
            \E
          \left ( \|u^\eps (s)\|^2 \right ) ds
           +\|g\|^2_{L^2(0,T; H)}
           +   
            2  
        \|\sigma_1  \|^2_{L^2(0,T;   L^2(\R^n,
        l^2))}
        $$
       \be\label{le32 p3}
        + c_1T
        +2 \|\psi_1\|_{L^1(0,T; L^1(\R^n))}
           +
       2\sqrt{\eps}  \E
          \left (
       \sup_{0\le r\le t} \left |
       \int_0^r 
     ( {u}^{\eps} (s) , 
        \sigma (s, u^{\eps}(s) )  dW )
        \right | \right ).
     \ee
     For the last term in \eqref{le32 p3},
     by the Burkholder inequality and \eqref{sig6} we obtain
     for all $\eps\in (0,1)$,
     $$
     2\sqrt{\eps} \E
          \left (
       \sup_{0\le r\le t} \left |
       \int_0^r 
     ( {u}^{\eps} (s) , 
        \sigma (s, u^{\eps}(s) )  dW )
        \right | \right )
        \le 6
         \E
          \left (
          \int_0^t
          \| \sigma (s, u^\eps (s))\|^2_{\call_2(l^2, H)}
          \|u^\eps (s) \|^2 ds
          \right )^{\frac 12}
        $$
        $$
        \le
         6
         \E
          \left (\sup_{0\le s\le t}  \|u^\eps (s) \| 
          \left (
          \int_0^t
          \| \sigma (s, u^\eps (s))\|^2_{\call_2(l^2, H)}
           ds
          \right )^{\frac 12}\right )
        $$
         $$
        \le
         {\frac 14}
         \E
          \left (\sup_{0\le s\le t}  \|u^\eps (s) \| ^2
          \right )
          +  36 \E  
          \left (
          \int_0^t
          \| \sigma (s, u^\eps (s))\|^2_{\call_2(l^2, H)}
           ds \right )
        $$
          \be\label{le32 p4}
        \le
         {\frac 14}
         \E
          \left (\sup_{0\le s\le t}  \|u^\eps (s) \| ^2
          \right )
          +  {\frac 14}\lambda_1
          \E \left (
          \int_0^t \|u^\eps (s)\|^p_{L^p(\R^n)} ds
          \right )
          + 72 
        \|\sigma_1  \|^2_{L^2(0,T;   L^2(\R^n,
        l^2))}
          + c_2 T,
      \ee
     where $c_2>0$ is a constant
        depending only on
        $\lambda_1$, 
        $p,q, \kappa$, $\sum_{k=1}^\infty \beta_k$
        and $\sum_{k=1}^\infty \gamma_k$.
        
      By \eqref{le32 p3}-\eqref{le32 p4} we get 
         for all $t\in [0,T]$,
        $$
          \E
          \left (
          \sup_{0\le r \le t}
          \left (
       \| {u}^{\eps}  (r) \|^2
       + 2\int_0^r\| (-\Delta)^{\frac \alpha{2}} {u}^{\eps}  (s)\|^2
       ds
       + {\frac 32}\lambda_1
       \int_0^r\| u^\eps (s) \|^p_{L^p(\R^n)} ds
       \right )\right )
       $$
          $$
          \le  \E
          \left (
           \|u_{0} \|^2 
           \right )
           +\int_0^t
            \E
          \left ( \|u^\eps (s)\|^2 \right ) ds
           +\|g\|^2_{L^2(0,T; H)}
         $$
       \be\label{le32 p5}
       +
        {\frac 14}
         \E
          \left (\sup_{0\le s\le t}  \|u^\eps (s) \| ^2
          \right )
          +  {\frac 14}\lambda_1
          \E \left (
          \int_0^t \|u^\eps (s)\|^p_{L^p(\R^n)} ds
          \right )
        + c_3,
      \ee
      where $c_3>0$ depends on $T$, but not on $u_0$
      or $\eps$.
      By \eqref{le32 p5} we obtain
        for all $t\in [0,T]$,
        $$
          \E
          \left (
          \sup_{0\le r \le t}
       \| {u}^{\eps}  (r) \|^2
       \right )
       + 2 \E
          \left (
          \int_0^t\| (-\Delta)^{\frac \alpha{2}} {u}^{\eps}  (s)\|^2
       ds \right )
       + {\frac 32}\lambda_1
        \E
          \left (
       \int_0^t \| u^\eps (s) \|^p_{L^p(\R^n)} ds
       \right ) 
       $$
          $$
          \le   3 \E
          \left (
           \|u_{0} \|^2 
           \right )
           + 3 \int_0^t
            \E
          \left ( \|u^\eps (s)\|^2 \right ) ds
           +3\|g\|^2_{L^2(0,T; H)}
          $$
       $$
       +
        {\frac 34}
         \E
          \left (\sup_{0\le s\le t}  \|u^\eps (s) \| ^2
          \right )
          +  {\frac 34}\lambda_1
          \E \left (
          \int_0^t \|u^\eps (s)\|^p_{L^p(\R^n)} ds
          \right )
        + 3 c_3,
     $$
      and
       hence 
        for all $t\in [0,T]$,
        $$ 
          \E
          \left (
          \sup_{0\le r \le t}
       \| {u}^{\eps}  (r) \|^2
       \right )
       + 8 \E
          \left (
          \int_0^t\| (-\Delta)^{\frac \alpha{2}} {u}^{\eps}  (s)\|^2
       ds \right )
       + 3\lambda_1
        \E
          \left (
       \int_0^t \| u^\eps (s) \|^p_{L^p(\R^n)} ds
       \right ) 
       $$
          $$
          \le   12 \E
          \left (
           \|u_{0} \|^2 
           \right )
           + 12 \int_0^t
            \E
          \left ( \sup_{0\le r\le s} \|u^\eps (r)\|^2 \right ) ds
           + 12\|g\|^2_{L^2(0,T; H)}
        + 12 c_3,
     $$
     which along with Gronwall's inequality
     implies that for all $t\in [0,T]$,
      $$ 
          \E
          \left (
          \sup_{0\le r \le t}
       \| {u}^{\eps}  (r) \|^2
       \right )
       + 8 \E
          \left (
          \int_0^t\| (-\Delta)^{\frac \alpha{2}} {u}^{\eps}  (s)\|^2
       ds \right )
       + 3\lambda_1
        \E
          \left (
       \int_0^t \| u^\eps (s) \|^p_{L^p(\R^n)} ds
       \right ) 
       $$
          $$
          \le  \left (  12 \E
          \left (
           \|u_{0} \|^2 
           \right )  
           + 12 \|g\|^2_{L^2(0,T; H)}
        + 12 c_3
        \right ) e^{12 t}.
     $$
     This completes the proof.
  \end{proof}

      In terms of Lemma \ref{le32}, 
      we know  that for every
       $\eps\in (0,1)$ and
      $u_0\in H$, there exists
      a 
       Borel-measurable map
     $\calg^\eps_{u_0}: C([0,T], U) \to  C([0, T], H )\bigcap  L^2(0, T;  V  )
     \bigcap  L^p(0, T;  L^p(\R^n)  )$
     such that the solution $u^\eps
     (\cdot, u_0) $ of
     \eqref{intr1}-\eqref{intr3} 
     defined on $[0,T]$
     with initial value $u_0$
     can be written as:
     $$
     u^\eps(\cdot, u_0) =\calg^\eps_{u_0} (W),
     \quad  \text{P-almost surely}.
     $$
     In the next section, we
     will
     study the   uniform LDP of
     the family
     $\{\calg^\eps_{u_0}\}$ 
     as $\eps \to 0$
     in the space
     $C([0, T], H )\bigcap  L^2(0, T;  V  )
     \bigcap  L^p(0, T;  L^p(\R^n)  )$.

   \section{Uniform LDP   of  reaction-diffusion
    equations} 
\setcounter{equation}{0}

In this section, we  first prove the
Freidlin-Wentzell
uniform LDP 
 of solutions to  \eqref{intr1}-\eqref{intr3}
 over bounded initial data,
 and then prove the 
 Dembo-Zeitouni uniform LDP
 over compact initial data.
  The
Freidlin-Wentzell
uniform LDP 
 of   \eqref{intr1}-\eqref{intr3}
 is stated below.

 \begin{thm}\label{main}
 	If \eqref{f1}-\eqref{f3}
 	and 
 	\eqref{sig1}-\eqref{sig4} hold true, then the solutions
 	of   
       \eqref{intr1}-\eqref{intr3}
       satisfy the LDP in
       $C([0, T], H )\bigcap  L^2(0, T;  V  )
       \bigcap  L^p(0, T;  L^p(\R^n)  )$  as $\eps \to 0$,
       which is uniform
       over  bounded
       initial data in $H$
       with   good rate function
 given by \eqref{rate}.
\end{thm}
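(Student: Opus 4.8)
The plan is to derive the theorem from Proposition \ref{fwuldp}. Fix $\rho>0$, set $\cale_0=\overline{B}_\rho(H)$ and $\cale=C([0,T],H)\cap L^2(0,T;V)\cap L^p(0,T;L^p(\R^n))$, and let $\calg^\eps_{u_0}:C([0,T],U)\to\cale$ be the solution map of \eqref{intr1}--\eqref{intr3} from Lemma \ref{le32}. Let $\calg^0_{u_0}$ denote the solution map of the deterministic control (skeleton) equation obtained from \eqref{intr1}--\eqref{intr3} by replacing $\sqrt\eps\,\sigma(t,u)\,dW$ with $\sigma(t,u)\,v\,dt$, for a control $v\in L^2(0,T;l^2)$. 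The first task is to show this control problem is well posed in $\cale$ and to establish bounds on $\|u\|_{C([0,T],H)}$, $\|u\|_{L^2(0,T;V)}$ and $\|u\|_{L^p(0,T;L^p(\R^n))}$ that are uniform over $u_0\in\overline{B}_\rho(H)$ and $v\in\overline{B}_N(L^2(0,T;l^2))$; these are the estimates of Lemma \ref{cosol}. In the energy identity for $\|u(t)\|^2$, the control contribution $2\int_0^t(u(s),\sigma(s,u(s))v(s))\,ds$ is bounded, via the growth estimates \eqref{sig5a}--\eqref{sig8} and the structural constraint $q\le 1+\tfrac p2$, by $\eta\int_0^t\|u(s)\|^p_{L^p(\R^n)}\,ds+C\int_0^t(1+\|v(s)\|^2)(1+\|u(s)\|^2)\,ds$ plus bounded terms. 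The first piece is absorbed by the dissipative term $\lambda_1\int_0^t\|u(s)\|^p_{L^p(\R^n)}\,ds$ coming from \eqref{f1}, and Gronwall's inequality with the $L^1(0,T)$-weight $1+\|v(s)\|^2$ (of $L^1$-norm at most $T+N^2$) then closes the estimate uniformly in $u_0$ and $v$. This absorption is precisely where the polynomial dissipativity of $F$ is used to control the superlinear growth of $\sigma$.

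Next I verify the compactness hypothesis \eqref{convp1}. Fix $u_0$ and $N$, take $v_m\in\overline{B}_N(L^2(0,T;l^2))$ with $v_m\rightharpoonup v$ weakly, and set $u_m=\calg^0_{u_0}(\int_0^\cdot v_m\,ds)$. Besides the uniform bounds above, the decisive ingredient is the uniform tail-ends estimate of Lemma \ref{tail}: $\sup_m\sup_{t\in[0,T]}\int_{|x|\ge k}|u_m(t,x)|^2\,dx\to 0$ as $k\to\infty$. Combining this with the compactness of $H^\alpha(\mathcal{O}_k)\hookrightarrow L^2(\mathcal{O}_k)$ on bounded balls and an Aubin--Lions type compactness argument in time, one shows $u_m\to\calg^0_{u_0}(\int_0^\cdot v\,ds)$ strongly in $\cale$; this is Lemma \ref{wc}. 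In particular the set in \eqref{convp1} is compact in $\cale$, and the function $I_{u_0}$ of \eqref{rate} (namely \eqref{convp2} for the present $\calg^0$) is a good rate function on $\cale$. The uniform tail-ends estimate is what compensates for the failure of global compactness of $H^\alpha(\R^n)\hookrightarrow L^2(\R^n)$ on the unbounded domain.

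The main step, and the main obstacle, is the uniform convergence \eqref{convp}. By Girsanov's theorem, $u^\eps_v:=\calg^\eps_{u_0}(W+\eps^{-1/2}\int_0^\cdot v\,ds)$ solves the controlled stochastic equation $du^\eps_v+(-\Delta)^\alpha u^\eps_v\,dt+F(t,x,u^\eps_v)\,dt=g\,dt+\sigma(t,u^\eps_v)\,v\,dt+\sqrt\eps\,\sigma(t,u^\eps_v)\,dW$. One first derives moment bounds for $u^\eps_v$ in $L^2(\Omega;C([0,T],H))\cap L^2(\Omega;L^2(0,T;V))\cap L^p(\Omega;L^p(0,T;L^p(\R^n)))$ uniform in $\eps\in(0,1)$, $u_0\in\overline{B}_\rho(H)$ and $v\in\cala_N$, again by It\^{o}'s formula, absorption of the control term and of the It\^{o} correction $\eps\int_0^t\|\sigma(s,u^\eps_v)\|^2_{\call_2(l^2,H)}\,ds$ into the dissipative term through \eqref{sig6}--\eqref{sig8}, Burkholder's inequality and Gronwall's inequality with weight $1+\|v(s)\|^2$. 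Then, with $w^\eps=u^\eps_v-z$ and $z=\calg^0_{u_0}(\int_0^\cdot v\,ds)$, one applies It\^{o}'s formula to $\|w^\eps(t)\|^2$: the $(-\Delta)^\alpha$-term is monotone; the drift difference is handled by \eqref{f3}, whose $\lambda_2$-part has a favorable sign and whose $\psi_4\|w^\eps\|^2$-part feeds Gronwall; the diffusion-control difference $(\sigma(t,u^\eps_v)-\sigma(t,z))v$ is estimated by the local Lipschitz bound \eqref{sig9} as $C\bigl(1+\|u^\eps_v\|^{p/2}_{L^p(\R^n)}+\|z\|^{p/2}_{L^p(\R^n)}\bigr)\|v(s)\|\,\|w^\eps\|$, whose prefactor is square-integrable in $s$ by the bounds of Lemma \ref{cosol}; and the remaining terms $\sqrt\eps\,\sigma(t,u^\eps_v)\,dW$ and $\eps\int_0^t\|\sigma(s,u^\eps_v)\|^2_{\call_2(l^2,H)}\,ds$ carry an explicit factor $\sqrt\eps$. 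Since the prefactor in the Lipschitz bound is only square-integrable rather than bounded, closing this estimate is organized via a localization/stopping-time argument in the $L^p(\R^n)$-norms of $u^\eps_v$ and $z$; one thereby obtains $\E\bigl(\sup_{t\in[0,T]}\|w^\eps(t)\|^2+\int_0^T\|w^\eps(s)\|^2_V\,ds+\int_0^T\|w^\eps(s)\|^p_{L^p(\R^n)}\,ds\bigr)\to 0$ as $\eps\to 0$, uniformly in $u_0\in\overline{B}_\rho(H)$ and $v\in\cala_N$, so \eqref{convp} follows by Chebyshev's inequality. This is Lemma \ref{cvs}.

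With \eqref{convp} and \eqref{convp1} in hand, Proposition \ref{fwuldp} yields at once the Freidlin--Wentzell uniform LDP of $\{\calg^\eps_{u_0}:\eps>0,\ u_0\in\overline{B}_\rho(H)\}$ in $\cale$ with rate function \eqref{rate}, uniformly on $\overline{B}_\rho(H)$; as $\rho>0$ is arbitrary, this is the statement of Theorem \ref{main}. The recurring delicate point is to ensure that every constant above depends only on $\rho$, $N$, $T$ and the structural constants in \eqref{f1}--\eqref{f3} and \eqref{sig1}--\eqref{sig4}, never on $u_0$, $v$ or $\eps$ --- which is exactly what the dissipativity-versus-superlinearity balance $q\le 1+\tfrac p2$ and the uniform tail-ends estimates are designed to guarantee.
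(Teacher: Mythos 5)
Your proposal is correct and follows essentially the same route as the paper: verify the compactness hypothesis \eqref{convp1} via the uniform estimates of Lemma \ref{cosol}, the tail-ends estimates of Lemma \ref{tail} and the weak-to-strong continuity of Lemma \ref{wc}, verify the uniform convergence \eqref{convp} via Girsanov, the moment bounds of Lemma \ref{geps} and a stopping-time localization as in Lemma \ref{cvs}, and then invoke Proposition \ref{fwuldp}. The only deviation is cosmetic: in the convergence step the paper absorbs the superlinear part of the Lipschitz estimate \eqref{sig7} into the monotone drift term via \eqref{elity2} and localizes on the $C([0,T],H)$-norm, whereas you estimate it by \eqref{sig9} and localize on the $L^p$-norms, which works equally well.
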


 In order to prove 
 Theorem \ref{main}, we consider the
 the controlled equation
 with control     $v\in L^2(0,T; l^2)$:
   \be\label{contr1}
{\frac {d u_v (t)}{dt}}
+  (-\Delta)^\alpha u_v  (t)  
+F(t, \cdot, u_v (t) )=  
  g (t)  
 +   \sigma(t, u_v (t))  
    v (t),
\ee
 with  initial data
 \be\label{contr2}
 u_v  (0)=u_0 \in H.
 \ee
 Given 
   $v\in L^2(0,T; l^2)$
   and
   $u_0\in H$, by a solution
   $u_v$ of \eqref{contr1}-\eqref{contr2}
   on $[0,T]$, we 
   mean
   $$u_v\in C([0, T], H )\bigcap  L^2(0, T;  V  )
   \bigcap  L^p(0, T;  L^p(\R^n)  )$$
   and for all $t\in [0,T]$ 
  \be\label{contr3}
   u_v (t)
   +\int_0^t
   (-\Delta)^\alpha u_v(s) ds
   +\int_0^t
   F(s,\cdot, u_v)ds
   =u_0 +\int_0^t 
   g(s) ds +\int_0^t
   \sigma(s, u_v(s)) v(s) ds,
\ee
   in $\left (
   V\bigcap L^p(\R^n) \right )^*$.
   
   For the existence and uniqueness
   of solutions to
   \eqref{contr1}-\eqref{contr2},
   we have  the following result.

 \begin{lem}\label{cosol}
If    \eqref{f1}-\eqref{f3}
  and \eqref{sig1}-\eqref{sig4}  hold,
  then for 
   every 
  $v\in L^2(0,T; l^2)$ and $u_0\in H$,
    problem
\eqref{contr1}-\eqref{contr2}
 has a unique solution
 $u_v(\cdot, u_0)$ in
  $ C([0,T], H)\bigcap L^2(0,T;V)
 \bigcap L^p(0,T; L^p(\R^n))$.

 In addition, 
     for every $R>0$,
    there  exists
       $L_2 =L_2  (R, T)>0$ such that  
       for  all $u_{0,1}, u_{0,2}
       \in  \overline{B}_R(H)$
        and  
    $v_1, v_2\in \overline{B}_R ( L^2(0, T; l^2))$, 
    the solutions 
     $u_{v_1}(\cdot, u_{0,1})$ and $u_{v_2}(\cdot,
     u_{0,2}) $
    of  \eqref{contr1}-\eqref{contr2}   
    satisfy  for all $t\in [0,T]$,
    $$
    \| u _{v_1} (t, u_{0,1} ) -u _{v_2} (t, u_{0,2} ) \| ^2
    +   \int_0^T \| u_{v_1} 
    (t, u_{0,1} ) -u_{v_2} (t, u_{0,2}) \|^2_{V}   dt
    $$
    $$
    +
    \int_0^T \| u_{v_1} 
    (t, u_{0,1} ) -u_{v_2} (t, u_{0,2}) \|^p_{L^p(\R^n) }   dt
    $$
    \be\label{cosol 1}
    \le
   L_2
    \left (
    \| u_{0,1}- u_{0,2}\|^2
    +\|  v_1- v_2 \|^2_{L^2(0, T; l^2)}
    \right ),
   \ee
   and
    \be\label{cosol 2}
    \| u_{v_1} (t, u_{0,1} ) \| ^2
    +
      \int_0^T \|  u_{v_1} (t, u_{0,1} ) \|^2_{V}
   dt 
   + 
   \int_0^T
   \| u_{v_1} (t, u_{0,1} )\|^p_{L^p(\R^n)} dt
    \le
   L_2 .
   \ee
 \end{lem}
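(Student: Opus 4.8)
The plan is to establish existence and uniqueness exactly as in Lemma \ref{le32}, and then to obtain \eqref{cosol 2} and \eqref{cosol 1} by energy estimates in which the superlinear growth of $\sigma$ is tamed by the dissipativity of $F$. For existence and uniqueness I would repeat the scheme of Lemma \ref{le32}: solve \eqref{contr1}-\eqref{contr2} first on the expanding balls $\mathcal{O}_k=\{|x|<k\}$ with homogeneous Dirichlet data, where the embedding $V_k\hookrightarrow H_k$ is compact, so the deterministic monotone-type problem has a unique solution $u_{v,k}$ by the standard theory (e.g. \cite{rwan1}); the control term $\sigma(t,u_{v,k}(t))v(t)$ is admissible since $u\mapsto\sigma(t,u)$ is locally Lipschitz from $H\cap L^p(\R^n)$ into $\call_2(l^2,H)$ by \eqref{sig9}. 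The uniform-in-$k$ bound is precisely the computation for \eqref{cosol 2} below, so one can pass to the limit $k\to\infty$ along a subsequence converging weakly in $L^2(0,T;V)\cap L^p(0,T;L^p(\R^n))$ and weak-$*$ in $L^\infty(0,T;H)$, identify the limit $u_v$ as a solution of \eqref{contr1}-\eqref{contr2} on $\R^n$, and deduce uniqueness from \eqref{cosol 1}. All energy identities are legitimate because $u_v\in L^2(0,T;V)\cap L^p(0,T;L^p(\R^n))$ with $\frac{du_v}{dt}\in L^2(0,T;V^*)+L^{p/(p-1)}(0,T;L^{p/(p-1)}(\R^n))$, so $t\mapsto\|u_v(t)\|^2$ is absolutely continuous and the chain rule applies.

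For \eqref{cosol 2}, test \eqref{contr1} with $u_v$ to get $\tfrac12\tfrac{d}{dt}\|u_v\|^2+\|(-\Delta)^{\alpha/2}u_v\|^2+\int_{\R^n}F(t,x,u_v)u_v\,dx=(g(t),u_v)+(\sigma(t,u_v)v(t),u_v)$. Bound the drift below by \eqref{f1}, bound $(g,u_v)$ by $\tfrac12\|g\|^2+\tfrac12\|u_v\|^2$, and for the control term use $(\sigma(t,u_v)v,u_v)\le\|\sigma(t,u_v)\|_{\call_2(l^2,H)}\|v\|_{l^2}\|u_v\|$ together with \eqref{sig6}, in which $\|\sigma_1(t)\|^2_{L^2(\R^n,l^2)}$ is essentially bounded on $[0,T]$ since $\sigma_1\in L^\infty_{loc}$; thus $\|\sigma(t,u_v)\|_{\call_2(l^2,H)}\le \eta\,\|u_v\|_{L^p(\R^n)}^{p/2}+C(\eta)$, and Young's inequality gives $(\sigma v,u_v)\le \tfrac{\eta^2}{2}\|u_v\|_{L^p(\R^n)}^p+\tfrac{\eta^2}{2}\|v\|_{l^2}^2\|u_v\|^2+\tfrac12\|u_v\|^2+C(\eta)\|v\|_{l^2}^2$. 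Choosing $\eta$ small enough that $\tfrac{\eta^2}{2}\le\tfrac{\lambda_1}{2}$ absorbs the $L^p$-terms into $\lambda_1\|u_v\|_{L^p(\R^n)}^p$, leaving $\tfrac{d}{dt}\|u_v\|^2\le\phi(t)\|u_v\|^2+h(t)$ with $\phi,h\in L^1(0,T)$ whose integrals over $[0,T]$ are bounded by a constant depending only on $R$ and $T$ when $u_0\in\overline B_R(H)$ and $v\in\overline B_R(L^2(0,T;l^2))$. Gronwall's inequality then yields the bound on $\sup_{[0,T]}\|u_v(t)\|^2$, and integrating the differential inequality delivers the $L^2(0,T;V)$ and $L^p(0,T;L^p(\R^n))$ bounds, i.e. \eqref{cosol 2}.

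For \eqref{cosol 1}, put $w=u_{v_1}(\cdot,u_{0,1})-u_{v_2}(\cdot,u_{0,2})$; subtracting the equations and testing with $w$ gives $\tfrac12\tfrac{d}{dt}\|w\|^2+\|(-\Delta)^{\alpha/2}w\|^2+\int_{\R^n}(F(t,x,u_{v_1})-F(t,x,u_{v_2}))w\,dx=((\sigma(t,u_{v_1})-\sigma(t,u_{v_2}))v_1,w)+(\sigma(t,u_{v_2})(v_1-v_2),w)$. By \eqref{f3} together with \eqref{elity1} and \eqref{elity2} the drift term is $\ge 2^{-p}\lambda_2\|w\|_{L^p(\R^n)}^p+\tfrac{\lambda_2}{4}\int_{\R^n}(|u_{v_1}|^{p-2}+|u_{v_2}|^{p-2})|w|^2dx-\|\psi_4(t)\|_{L^\infty}\|w\|^2$. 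For the first term on the right, \eqref{sig7} gives $\|\sigma(t,u_{v_1})-\sigma(t,u_{v_2})\|_{\call_2(l^2,H)}^2\le C(\delta)\|w\|^2+\delta\int_{\R^n}(|u_{v_1}|^{p-2}+|u_{v_2}|^{p-2})|w|^2dx$; with $\delta$ small the singular part is absorbed by the corresponding drift term and the rest is $\le C(\delta)\|v_1\|_{l^2}\|w\|^2+\tfrac12\|v_1\|_{l^2}^2\|w\|^2$. For the second term, \eqref{sig6} (using $\sigma_1\in L^\infty_{loc}$) gives $\|\sigma(t,u_{v_2})\|_{\call_2(l^2,H)}\le\|u_{v_2}\|_{L^p(\R^n)}^{p/2}+C$, and the crucial step is $\|u_{v_2}\|_{L^p(\R^n)}^{p/2}\|v_1-v_2\|_{l^2}\|w\|\le\tfrac12\|u_{v_2}\|_{L^p(\R^n)}^{p}\|w\|^2+\tfrac12\|v_1-v_2\|_{l^2}^2$, so the singular factor $\|u_{v_2}(t)\|_{L^p(\R^n)}^p$ gets attached to $\|w\|^2$ (hence becomes part of the Gronwall coefficient) rather than to the forcing, the remaining piece being $\le\tfrac12\|w\|^2+C\|v_1-v_2\|_{l^2}^2$. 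Collecting terms, $\tfrac{d}{dt}\|w\|^2\le\Phi(t)\|w\|^2+C\|v_1-v_2\|_{l^2}^2$ with $\Phi(t)=C(1+\|\psi_4(t)\|_{L^\infty}+\|v_1(t)\|_{l^2}+\|v_1(t)\|_{l^2}^2+\|u_{v_2}(t)\|_{L^p(\R^n)}^p)$, whose integral over $[0,T]$ is bounded via \eqref{cosol 2} by a constant depending only on $R$ and $T$. Gronwall gives $\|w(t)\|^2\le L_2(\|u_{0,1}-u_{0,2}\|^2+\|v_1-v_2\|_{L^2(0,T;l^2)}^2)$, integration of the differential inequality controls $\int_0^T\|w\|_V^2$ and $\int_0^T\|w\|_{L^p(\R^n)}^p$, and taking $u_{0,1}=u_{0,2}$, $v_1=v_2$ recovers uniqueness.

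The main obstacle is the superlinear growth of $\sigma$: in both estimates the control term produces contributions of order $\|u\|_{L^p(\R^n)}^{p/2}$, or $(|u_{v_1}|^{p-2}+|u_{v_2}|^{p-2})|w|^2$, which are not of lower order and must be handled either by the dissipative terms of $F$ or by being absorbed into an $L^1(0,T)$ Gronwall coefficient. The most delicate point is the forcing term $(\sigma(t,u_{v_2})(v_1-v_2),w)$ in the difference estimate: because $t\mapsto\|\sigma(t,u_{v_2}(t))\|_{\call_2(l^2,H)}$ is only $L^2(0,T)$ and not bounded, a naive Cauchy--Schwarz would give a Hölder rather than a Lipschitz dependence on $v$; the remedy is to pair the $L^p$-growth factor with $\|w\|^2$ rather than with $\|v_1-v_2\|_{l^2}^2$, which is legitimate precisely because $\|u_{v_2}\|_{L^p(\R^n)}^p\in L^1(0,T)$ by \eqref{cosol 2} and $\|w\|$ is the quantity being closed by Gronwall. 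Once the dissipativity in \eqref{f1}, \eqref{f3} and the $L^\infty_{loc}$ regularity of $\sigma_1$ and $\psi_4$ are exploited, the remaining manipulations are routine.
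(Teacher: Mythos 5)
Your proposal is correct and follows essentially the same route as the paper: the a priori bound \eqref{cosol 2} via \eqref{f1}, \eqref{sig6}, Young and Gronwall, and the difference estimate \eqref{cosol 1} via the splitting $(\sigma(u_{v_1})-\sigma(u_{v_2}))v_1+\sigma(u_{v_2})(v_1-v_2)$, absorbing the singular part of \eqref{sig7} into the monotone term from \eqref{f3} (using \eqref{elity1}--\eqref{elity2}) and attaching $\|u_{v_2}(t)\|_{L^p(\R^n)}^p$ to $\|w\|^2$ as an $L^1(0,T)$ Gronwall coefficient, which is exactly the paper's key maneuver. The only (inessential) deviation is that for existence you approximate by problems on expanding balls as in Lemma \ref{le32}, whereas the paper simply invokes the Galerkin method, with uniqueness in both cases following from \eqref{cosol 1}.
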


  \begin{proof}
  The existence of solutions of 
  \eqref{contr1}-\eqref{contr2}
  can be obtained by the Galerkin method
  for every 
   $v\in L^2(0,T; l^2)$ 
   and $u_0\in H$ when the
   conditions
    \eqref{f1}-\eqref{f3}
  and \eqref{sig1}-\eqref{sig4} are fulfilled.
  In what follows, we  prove \eqref{cosol 1}
  and \eqref{cosol 2}  from which the
  uniqueness of solutions follows.

     Let $u_1(t) = 
     u_{v_1} (t, u_{0,1})$
     and
     $u_2(t) = 
     u_{v_2} (t, u_{0,2})$
   for $t\in [0,T]$. 
    It follows from   \eqref{contr1},
   that 
    $$
   {\frac 12} {\frac d{dt}}
   \| u_1(t) \|^2
   +\| (-\Delta)^{\frac {\alpha}2} u_1(t)\|^2
    +\int_{\R^n} F(t,x, u_1(t))  u_1(t)  dx 
   $$
   \be\label{cosol p1}
   =
     (g(t), u_1(t))
   +(\sigma (t, u_1(t)) v_1(t), u_1(t)).
  \ee
  By \eqref{f1} and  \eqref{cosol p1}
  we obtain
  $$
    {\frac d{dt}}
   \| u_1(t) \|^2
   + 2 \| (-\Delta)^{\frac {\alpha}2} u_1(t)\|^2
    + 2\lambda_1
    \| u_1(t) \|^p_{L^p(\R^n)} 
   $$
   \be\label{cosol p2}
   \le  \| u_1(t) \|^2 + \| g(t) \|^2
   + 2 \|\psi_1 (t) \|_{L^1(\R^n)}
    + 2 (\sigma (t, u_1(t)) v_1(t), u_1(t)).
  \ee
   By \eqref{sig6} we have
   for  all   $t\in [0,T]$,
 $$
   2| (\sigma (t, u_1(t)) v_1(t), u_1(t))| 
  \le   2 \|  \sigma (t, u_1(t)) \|_{\call_2(l^2, H)}
    \|v_1(t) \|_{l^2}  \| u_1(t) \|
    $$
    $$
   \le 
   \| \sigma (t, u_1(t))\|^2_{\call_2(l^2, H)} 
  + 
  \| v_1(t)\|^2_{l^2}\| u_1(t) \|^2
  $$
 \be\label{cosol p3}
  \le  \lambda_1 \| u_1(t)\|^p_{L^p
  (\R^n)}
  + 2\|\sigma_1
  (t) \|^2_{L^2(\R^n, l^2)}
  +  \| v_1(t)\|^2_{l^2}
  \| u_1(t) \|^2 +c_1,
\ee
  where $c_1>0$ depends
  only on $\lambda_1, p, q,  \kappa,
  \sum_{k=1}^\infty \beta_k$
  and 
 $\sum_{k=1}^\infty \beta_k$.
 
 It follows from \eqref{cosol p2}-\eqref{cosol p3}
 that for all $t\in [0, T]$,
 $$
    {\frac d{dt}}
   \| u_1(t) \|^2
   + 2 \| (-\Delta)^{\frac {\alpha}2} u_1(t)\|^2
    +  \lambda_1
    \| u_1(t) \|^p_{L^p(\R^n)} 
   $$
   \be\label{cosol p4}
   \le  (1+ \|v_1(t)\|^2_{l^2}
   )  \| u_1(t) \|^2 + \| g(t) \|^2
   + 2\|\sigma_1
  (t) \|^2_{L^2(\R^n, l^2)}
   + 2 \|\psi_1 (t) \|_{L^1(\R^n)} 
   + 
   c_1.
  \ee 
 By \eqref{cosol p4}
 and Gronwall's inequality  we  get that
  for all $t\in [0, T]$,
  $\|u_{0,1}\|\le R$ and $\|v_1\|_{L^2
  (0,T; l^2)} \le R$,
  $$
   \| u_1(t) \|^2 
   + 2  \int_0^t \| (-\Delta)^{\frac {\alpha}2} u_1(s)\|^2 ds
    +  \lambda_1
   \int_0^t  \| u_1(s) \|^p_{L^p(\R^n)}  ds
   $$
   $$
   \le 
    e^{   \int_0^t  
   \left (
   1
    +    \| v_1(r)\|^2_{l^2}
   \right )
      dr }\|u_{0,1} \|^2
   + 
    \int_0^t
    e^{
        \int_s^t
   \left (
  1 
     +    \| v_1(r)\|^2_{l^2}
   \right )
     dr 
      } 
        ( 
       \| g(s) \|^2
       + 2\|\sigma_1
  (s) \|^2_{L^2(\R^n, l^2)}
      + 2 \| \psi_1 (s) \|_{L^1(\R^n)}
      +c_1
        )
      ds
   $$
   $$
     \le 
     e^{T  +   \int_0^T     \| v_1(r)\|^2_{l^2}
      dr }\|u_{0,1}\|^2 
     +
      e^{T  +   \int_0^T     \| v_1(r)\|^2_{l^2}
      dr }
      \int_0^T
        ( c_1
      +\| g(s) \|^2
      + 2\|\sigma_1
  (s) \|^2_{L^2(\R^n, l^2)}
      + 2 \| \psi_1 (s) \|_{L^1(\R^n)}
       )
      ds
      $$
\be\label{cosol p5}
     \le R^2
     e^{T  +   R^2}
     +
      e^{T  +   R^2}
        \left ( c_1T
      +\| g\|^2_{L^2(0,T; H)}
      + 2\|\sigma_1
   \|^2_{L^2(0,T; L^2(\R^n, l^2))}
      + 2 \| \psi_1   \|_{L^1(0,T; L^1(\R^n))}
      \right ),
\ee
  which implies \eqref{cosol 2}.

    We now  prove  \eqref{cosol 1}.
   By \eqref{cosol p5}, we find that
  there exists $c_3=c_3(R, T)>0$
  such that for all
    $\| u_{0,1} \|\le R$,  $\| u_{0,2} \|\le R$, 
    $\| v_1\|_{L^2(0, T; l^2)}\le R$
    and $\| v_2\|_{L^2(0, T; l^2)}\le R$, 
    \be
    \label{cosol p7}
      \| u_1  \|_{C([0,T], H)} +   \| u_2  \|_{C([0,T], H)}
      +\int_0^T \left (
      \| u_1 (t) \|^p_{L^p(\R^n)}
      +
       \| u_2 (t) \|^p_{L^p(\R^n)}
       \right ) dt
   \le   c_3 .
    \ee 
      By \eqref{contr1}-\eqref{contr2}
     we  have
     $$
     {\frac d{dt}}
     \| u_1(t) -u_2(t)\|^2
     +2  \| (-\Delta)^{\frac {\alpha}2}
     (u_1(t)-u_2(t))\|^2
     $$
     $$
    =-
     2\int_{\R^n}
      (F(t,x, u_1(t)) -F(t,x, u_2(t)))  (u_1(t)-u_2(t)) dx
     $$
  $$
     +2
     \left (
     \sigma (t, u_1(t))v_1(t)-
     \sigma (t, u_2(t))v_2(t),
     \  u_1(t)-u_2(t)
     \right ),
    $$
    which along with \eqref{f3} yields
     $$
     {\frac d{dt}}
     \| u_1(t) -u_2(t)\|^2
     +2  \| (-\Delta)^{\frac {\alpha}2}
     (u_1(t)-u_2(t))\|^2
     $$
     $$
     + 2\lambda_2
     \int_{\R^n}
     \left (
     |u_1(t,x)|^{p-2} u_1(t,x)
     -
       |u_2(t,x)|^{p-2} u_2(t,x)
       \right )(u_1(t,x) - u_2(t,x)  ) dx
     $$
    \be\label{cosol p8}
    \le   2\|\psi_4(t)\|_{L^\infty (\R^n)}
    \|u_1(t)-u_2(t) \|^2
     +2
     \left (
     \sigma (t, u_1(t))v_1(t)-
     \sigma (t, u_2(t))v_2(t),
     \  u_1(t)-u_2(t)
     \right ) .
     \ee
    Note that
    $$
   2
     \left (
     \sigma (t, u_1(t))v_1(t)-
     \sigma (t, u_2(t))v_2(t),
     \  u_1(t)-u_2(t)
     \right )
     $$ 
      $$
      =
   2
     \left (
     (\sigma (t, u_1(t))
     -
     \sigma (t, u_2(t)) )
     v_1(t)
     + \sigma (t, u_2(t)) (v_1 (t) -v_2 (t)), 
     \  u_1(t)-u_2(t)
     \right )
     $$ 
     $$
   \le 2
     \| 
    \sigma (t, u_1(t))-
     \sigma (t, u_2(t))\|_{\call_2( {l^2},H)}
     \|v_1(t)\| _{l^2}
     \| u_1(t)-u_2(t) \|
     $$ 
     $$
   + 2
     \|  
     \sigma (t, u_2(t))\|_{\call_2( {l^2},H)}
     \| v_1(t) - 
     v_2(t) \|_ {l^2}
     \| u_1(t)-u_2(t) \|
     $$
      $$
   \le  
     \| 
    \sigma (t, u_1(t))-
     \sigma (t, u_2(t))\|_{\call_2( {l^2},H)}^2
     +
     \|v_1(t)\| _{l^2}^2
     \| u_1(t)-u_2(t) \|^2
     $$ 
     \be\label{cosol p9}
   +  
     \|  
     \sigma (t, u_2(t))\|_{\call_2( {l^2},H)}^2
     \| u_1(t)-u_2(t) \|^2
     +
      \| v_1(t) - 
     v_2(t) \|_ {l^2}^2.
     \ee

     By  \eqref{elity2}  and \eqref{sig7}   we  obtain
     $$
      \| 
    \sigma (t, u_1(t))-
     \sigma (t, u_2(t))\|^2_{\call_2(l^2,H)}
     \le c_4    \| u_1(t)-u_2(t) \|^2
     $$
     $$
     + {\frac 12} \lambda_2
     \int_{\R^n} \left (
     |u_1(t, x)|^{p-2}
     +|u_2(t,x)|^{p-2} \right )
     \left (u_1(t,x)-u_2(t,x) \right )^2 dx
   $$
  \be\label{cosol p10}
   \le c_4    \| u_1(t)-u_2(t) \|^2
 +  \lambda_2
     \int_{\R^n} \left (
     |u_1|^{p-2}u_1 
     - |u_2|^{p-2}u_2
      \right )
     \left (u_1(t,x)-u_2(t,x) \right )  dx,
     \ee
   where $c_4>0$ depends only on
   $\lambda_2$, $p, q, \kappa$ and $\sum_{k=1}^\infty
   \alpha_k$.
   By \eqref{sig6}  we
       have
     \be\label{cosol p11} 
      \|  
     \sigma (t, u_2(t))\|^2_{\call_2(l^2,H)}
     \le  \|u_2 (t) \|^p_{L^p(\R^n)} 
     + c_5, 
     \ee 
     where $c_5>0$   depends only on
     $ \| \sigma_1 \|  _{C([0,T], L^2(\R^n, l^2))}$,
     $p, q, \kappa$,
     $\sum_{k=1}^\infty
   \beta_k$
      and $\sum_{k=1}^\infty
   \gamma_k$.

   By \eqref{cosol p9}-\eqref{cosol p11} we obtain
    $$
   2
     \left (
     \sigma (t, u_1(t))v_1(t)-
     \sigma (t, u_2(t))v_2(t),
     \  u_1(t)-u_2(t)
     \right )
     $$ 
     $$
     \le
    \lambda_2
     \int_{\R^n} \left (
     |u_1|^{p-2}u_1 
     - |u_2|^{p-2}u_2
      \right )
     \left (u_1(t,x)-u_2(t,x) \right )  dx
     $$
    \be\label{cosol p12}
   +
     \left (
     c_4 + c_5
     +\| v_1 (t) \|_{l^2}^2
     + \| u_2 (t) \|^p_{L^p(\R^n)}
     \right ) \| u_1 (t) -u_2 (t) \|^2
     + \| v_1 (t) -v_2 (t) \|^2.
     \ee
     By \eqref{cosol p8} and \eqref{cosol p12} we get
     for $t\in (0,T)$,
      $$
     {\frac d{dt}}
     \| u_1(t) -u_2(t)\|^2
     +2  \| (-\Delta)^{\frac {\alpha}2}
     (u_1(t)-u_2(t))\|^2
     $$
     $$
     +  \lambda_2
     \int_{\R^n}
     \left (
     |u_1(t,x)|^{p-2} u_1(t,x)
     -
       |u_2(t,x)|^{p-2} u_2(t,x)
       \right )(u_1(t,x) - u_2(t,x)  ) dx
     $$ 
    \be\label{cosol p13}
   +
     \left (
      c_6
     +\| v_1 (t) \|_{l^2}^2
     + \| u_2 (t) \|^p_{L^p(\R^n)}
     \right ) \| u_1 (t) -u_2 (t) \|^2
     + \| v_1 (t) -v_2 (t) \|^2,
     \ee
    where
    $c_6=  c_4 + c_5 +2\|\psi_4 \|_{
    L^\infty(0,T; L^\infty (\R^n))}$.
     
     By \eqref{cosol p13} and Gronwall's inequality, we get
     for all $t\in [0, T]$,
      $$ 
     \| u_1(t) -u_2(t)\|^2
     +2  \int_0^t \| (-\Delta)^{\frac {\alpha}2}
     (u_1(s)-u_2(s))\|^2 ds
     $$
     $$
     +  \lambda_2\int_0^t 
     \int_{\R^n}
     \left (
     |u_1(s,x)|^{p-2} u_1(s,x)
     -
       |u_2(s,x)|^{p-2} u_2(s,x)
       \right )(u_1(s,x) - u_2(s,x)  ) dx ds
     $$ 
   $$
      \le e^{
      \int_0^t 
      \left (
      c_6
     +\| v_1 (r) \|_{l^2}^2
     + \| u_2 (r) \|^p_{L^p(\R^n)}
     \right )
       dr
      } \| u_{0,1}  -u_{0,2}   \|^2
      $$
      $$
            +  
    \int_0^t
    e^{
      \int_s^t 
     \left (
      c_6
     +\| v_1 (r) \|_{l^2}^2
     + \| u_2 (r) \|^p_{L^p(\R^n)}
     \right )
      dr
      }
      \|v_1(s) - v_2 (s)  \|_{l^2}^2
      ds
      $$
      $$
      \le e^{
      \int_0^T 
      \left (
      c_6
     +\| v_1 (r) \|_{l^2}^2
     + \| u_2 (r) \|^p_{L^p(\R^n)}
     \right )
       dr
      } \| u_{0,1}  -u_{0,2}   \|^2
      $$
     \be\label{cosol p14}
            + e^{
      \int_0^T 
     \left (
      c_6
     +\| v_1 (r) \|_{l^2}^2
     + \| u_2 (r) \|^p_{L^p(\R^n)}
     \right )
      dr
      } 
    \int_0^t
      \|v_1(s) - v_2 (s)  \|_{l^2}^2
      ds.
      \ee
   It follows from \eqref{cosol p7}
   and \eqref{cosol p14} that  
   for any $\| v_1\|_{L^2(0,T; l^2)}
   \le R$ and $ t\in [0,T]$,
      $$ 
     \| u_1(t) -u_2(t)\|^2
     +2  \int_0^t \| (-\Delta)^{\frac {\alpha}2}
     (u_1(s)-u_2(s))\|^2 ds
     $$
     $$
     +  \lambda_2\int_0^t 
     \int_{\R^n}
     \left (
     |u_1(s,x)|^{p-2} u_1(s,x)
     -
       |u_2(s,x)|^{p-2} u_2(s,x)
       \right )(u_1(s,x) - u_2(s,x)  ) dx ds
     $$ 
    \be\label{cosol p15}
      \le c_7 \| u_{0,1}  -u_{0,2}   \|^2
      +c_7
           \|v_1  - v_2    \|^2_{L^2(0,T; l^2)},
    \ee
    where $c_7=c_7(R , T)>0$, which
    along with \eqref{elity1}  yields
    \eqref{cosol 1} and thus completes the proof.
  \end{proof}

 In the sequel, we will prove the continuity
 of the solution $u_v$ of the controlled equation
  \eqref{contr1}  in $v$ with  respect to
   the weak topology of  $L^2(0,T; l^2)$
   for which  we need the uniform tail-ends estimates
   of $u_v$
   as given below.

 \begin{lem}\label{tail}
 If    \eqref{f1}-\eqref{f3}
  and \eqref{sig1}-\eqref{sig4} are fulfilled,
  then for every
  $T>0$, $u_0\in H$,  
  $R>0$ and $\eps>0$,
  there exists $K=K(T, u_0, R, \eps)>0$ such that
  for all $v\in \overline{B}_R (L^2(0,T; l^2))$,
  the solution
  $u_v$ of \eqref{contr1}-\eqref{contr2}
  satisfies, for all $m\ge K$  and  $t\in [0,T]$,
   $$
   \int_{|x|\ge m}
   \left |u_v(t, x) \right |^2 dx
    +  \int_0^T \int_{|x| \ge m }
   |u(s,x)|^p dx ds
 < \eps.
 $$
   \end{lem}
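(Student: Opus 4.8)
The plan is to run the standard far-field cutoff energy estimate for the controlled equation \eqref{contr1}, letting the uniform a priori bound \eqref{cosol 2} do the bookkeeping and handling the nonlocal operator by a commutator estimate. Fix $\theta\in C^\infty([0,\infty),[0,1])$ with $\theta\equiv0$ on $[0,1]$, $\theta\equiv1$ on $[2,\infty)$ and $\sup|\theta'|<\infty$, and for $m\ge1$ set $\theta_m(x)=\theta(|x|^2/m^2)$, so that $\theta_m\in C^\infty(\R^n)$ has all derivatives bounded, $\1_{\{|x|\ge\sqrt2 m\}}\le\theta_m\le\1_{\{|x|\ge m\}}$ and $|\nabla\theta_m|\le C/m$. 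Since $\theta_m$ is a smooth bounded multiplier, $\theta_m u_v(t)\in V\cap L^p(\R^n)$ whenever $u_v(t)\in V\cap L^p(\R^n)$; taking $\xi=\theta_m u_v(t)$ in \eqref{contr3} and applying the deterministic chain rule in the triple $V\cap L^p(\R^n)\hookrightarrow H\hookrightarrow (V\cap L^p(\R^n))^*$ (as in \cite{rwan1}), one obtains, with $y_m(t):=\int_{\R^n}\theta_m(x)|u_v(t,x)|^2\,dx$,
\be\label{tailpr1}
\tfrac12 y_m'(t)+\int_{\R^n}\theta_m F(t,x,u_v)u_v\,dx+\int_{\R^n}\big((-\Delta)^{\frac\alpha2}u_v\big)(-\Delta)^{\frac\alpha2}(\theta_m u_v)\,dx=\int_{\R^n}\theta_m g\,u_v\,dx+\int_{\R^n}\theta_m(\sigma(t,u_v)v(t))u_v\,dx .
\ee

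Next I would bound the four terms for a.e.\ $t\in[0,T]$. By \eqref{f1}, $\int_{\R^n}\theta_m F(t,x,u_v)u_v\,dx\ge\lambda_1\int_{\R^n}\theta_m|u_v|^p\,dx-\int_{|x|\ge m}\psi_1(t)\,dx$. For the nonlocal term I would use the Gagliardo seminorm representation from \cite{dine1} and split $\theta_m(x)u_v(x)-\theta_m(y)u_v(y)=\theta_m(x)(u_v(x)-u_v(y))+u_v(y)(\theta_m(x)-\theta_m(y))$: the term equals a nonnegative principal part plus a remainder which, using $|\theta_m(x)-\theta_m(y)|\le\min\{2,Cm^{-1}|x-y|\}$, Young's inequality and $\|(-\Delta)^{\frac\alpha2}u_v\|^2\le\|u_v\|_V^2$, is bounded in absolute value by $Cm^{-\gamma}\|u_v(t)\|_V^2$ for some $\gamma=\gamma(\alpha)>0$; hence this term is $\ge-Cm^{-\gamma}\|u_v(t)\|_V^2$. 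For the forcing term, $|\int_{\R^n}\theta_m g u_v|\le\tfrac12 y_m(t)+\tfrac12\int_{|x|\ge m}|g(t)|^2$. For the control term, Cauchy--Schwarz in $l^2$ gives $|\int_{\R^n}\theta_m(\sigma(t,u_v)v(t))u_v|\le\tfrac12\|v(t)\|_{l^2}^2 y_m(t)+\tfrac12\int_{\R^n}\theta_m\|\sigma(t,x,u_v)\|_{l^2}^2\,dx$, and, arguing as in the derivation of \eqref{sig5a}--\eqref{sig6} but with the weight $\theta_m$ and with $q\le1+\tfrac p2<p$ (so that $\kappa\in L^2\cap L^\infty(\R^n)\subset L^{2p/(p-q)}(\R^n)$),
\be\label{tailpr2}
\int_{\R^n}\theta_m\|\sigma(t,x,u_v)\|_{l^2}^2\,dx\le 2\int_{|x|\ge m}\|\sigma_1(t)\|_{l^2}^2+C\int_{|x|\ge m}\big(\kappa^2+\kappa^{\frac{2p}{p-q}}\big)+\lambda_1\int_{\R^n}\theta_m|u_v|^p\,dx .
\ee

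Inserting these bounds into \eqref{tailpr1}, moving the $|u_v|^p$-contribution of the control term to the left-hand side and using \eqref{f1}, and multiplying by $2$, I get for a.e.\ $t\in[0,T]$
\be\label{tailpr3}
y_m'(t)+\lambda_1\int_{\R^n}\theta_m|u_v(t)|^p\,dx\le\big(1+\|v(t)\|_{l^2}^2\big)y_m(t)+\frac{C}{m^{\gamma}}\|u_v(t)\|_V^2+h_m(t),
\ee
with $h_m(t)=2\int_{|x|\ge m}\|\sigma_1(t)\|_{l^2}^2+C\int_{|x|\ge m}(\kappa^2+\kappa^{2p/(p-q)})+2\int_{|x|\ge m}\psi_1(t)+\int_{|x|\ge m}|g(t)|^2$. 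Since $\sigma_1\in L^2(0,T;L^2(\R^n,l^2))$, $\kappa\in L^2\cap L^{2p/(p-q)}(\R^n)$, $\psi_1\in L^1(0,T;L^1(\R^n))$ and $g\in L^2(0,T;L^2(\R^n))$, dominated convergence yields $\int_0^T h_m(t)\,dt\to0$ as $m\to\infty$; by \eqref{cosol 2}, $m^{-\gamma}\int_0^T\|u_v(t)\|_V^2\,dt\le L_2 m^{-\gamma}\to0$, uniformly for $v\in\overline B_R(L^2(0,T;l^2))$; and $y_m(0)\le\int_{|x|\ge m}|u_0|^2\to0$ since $u_0\in H$. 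Dropping the nonnegative $L^p$-term in \eqref{tailpr3} and applying Gronwall's inequality on $[0,T]$ (the exponent being $\le T+R^2$ when $v\in\overline B_R$) gives $y_m(t)\le e^{T+R^2}\big(y_m(0)+L_2 m^{-\gamma}+\int_0^T h_m\big)=:\delta(m)$ for all $t\in[0,T]$, with $\delta(m)\to0$; integrating \eqref{tailpr3} over $[0,T]$ and feeding this back bounds $\lambda_1\int_0^T\int_{\R^n}\theta_m|u_v|^p\,dx\,dt$ by $y_m(0)+(T+R^2)\delta(m)+L_2 m^{-\gamma}+\int_0^T h_m\to0$ as well. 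Because $\theta_m\ge\1_{\{|x|\ge\sqrt2 m\}}$, it then suffices to choose $K=K(T,u_0,R,\eps)$ so large that, for $m\ge K$, both quantities evaluated at the appropriate cutoff scale are $<\eps/2$; the dependence of $K$ on $u_0$ itself (not merely on $\|u_0\|$) enters only through the term $y_m(0)=\int_{|x|\ge m}|u_0|^2\,dx$.

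The step I expect to be the main obstacle is the nonlocal term $\int_{\R^n}\big((-\Delta)^{\frac\alpha2}u_v\big)(-\Delta)^{\frac\alpha2}(\theta_m u_v)$: for $\alpha=1$ this reduces to an elementary boundary-layer computation, but for $\alpha\in(0,1)$ the cutoff does not confine the Dirichlet form to a thin annulus, so one must carefully isolate the nonnegative principal part and control the commutator remainder by a fixed negative power of $m$, uniformly in $t$ and in $v\in\overline B_R$. Combined with \eqref{cosol 2}, this is precisely what forces the right-hand side of \eqref{tailpr3} to tend to zero; the remaining estimates are routine, if lengthy, bookkeeping.
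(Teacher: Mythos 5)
Your proof is correct and follows essentially the same route as the paper: a smooth cutoff energy estimate for \eqref{contr1}, the splitting of the fractional Dirichlet form into a nonnegative principal part plus a commutator remainder bounded by $Cm^{-\alpha}\|u_v(t)\|_V^2$, absorption of the superlinear $\sigma$-contribution into the $\lambda_1\|u_v\|_{L^p}^p$ dissipation from \eqref{f1}, Gronwall with the uniform factor $e^{T+R^2}$, and the tail integrability of $\sigma_1,\psi_1,g,\kappa$ together with the uniform bound of Lemma \ref{cosol}. The only differences are cosmetic (weight $\theta_m$ versus $\theta_m^2$ and the resulting cutoff scale), so no further comment is needed.
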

     
 \begin{proof}
 Choose a smooth function
 $\theta: \R^n \to [0,1]$    such that 
 \be\label{cutoff}
 \theta (x)=0 \ \ \text{for }  |x| \le {\frac 12};    \  \ \
 \theta (x) = 1 \  \  
 \text{for } \  |x| \ge 1.
\ee
Denote by 
 $\theta_m (x)  =  \theta  \left (
 {\frac xm}
 \right ) $ for all $m\in \N$  and $x\in \R^n$.
  By \eqref{contr1}  we obtain
  \be\label{ta 1}
 {\frac {d}{dt}}
 \|\theta_m u_v (t) \|^2 
 + 2 
   (   (-\Delta)^ {\frac {\alpha}2}   u_v(t),  
    (-\Delta)^ {\frac {\alpha}2} (\theta_m^2 u_v (t) ))
  + 2  \int_{\R^n}
   F(t, x, u_v (t))   \theta_m^2 (x)   u_v (t) dx  
 $$
 $$
 = 2   (\theta_m g (t), \theta_m  u_v (t) ) 
  + 2(  \theta_m
  \sigma (t, u_v(t)) v(t),
  \theta_m  u_v(t))  .
 \ee
 Note that
  $$
 2 
    (   (-\Delta)^ {\frac {\alpha}2}   u_v(t),  
     (-\Delta)^ {\frac {\alpha}2} (\theta_m^2 u_v(t)) )
   $$
   $$
   =
   C(n,\alpha) 
   \int_{\R^n}\int_{\R^n} 
   {\frac
   { (u_v(t,x) -u_v(t,y)) (\theta_m^2(x) u_v(t,x)  
   -  \theta_m^2(y)  u_v(t,y)  ) }
   {|x-y|^{n+2\alpha}}
   }dxdy ,
  $$ 
  from which, after some calculations,  we can obtain
    \be\label{ta 2}
   -2 
    (   (-\Delta)^ {\frac {\alpha}2}   u_v(t),  
     (-\Delta)^ {\frac {\alpha}2} (\theta_m^2 u_v(t)) )
    \le c_1    m^{-\alpha}
   \| u_v (t) \|^2_V  ,
  \ee
  where $c_1>0$ is  
  independent of $m$.
  By \eqref{f1} and Young's inequality we obtain
  from \eqref{ta 1}-\eqref{ta 2} that
$$
 {\frac {d}{dt}}
 \|\theta_m u_v (t) \|^2 
 + 2 \lambda_1 \int_{\R^n}
 \theta_m ^2 (x) |u_v (t,x)|^p dx
 $$
 $$
 \le
 c_1 m^{-\alpha} \| u_v (t) \|_V^2
 +2\int_{\R^n} \theta_m ^2 (x) \psi_1 (t,x) dx
 +\|\theta_m g (t) \|^2 
 + \| \theta_m u_v (t) \|^2
 $$
   \be\label{ta 3}
   +   
  \| v(t)\|^2_{l^2} \| \theta_m  u_v(t) \|^2 
  +  \| \theta_m
  \sigma (t, u_v(t))\|_{\call_2(l^2, H)}^2
 .
 \ee
 We now deal with the last term in \eqref{ta 3}.
 By \eqref{sig3} and \eqref{sig5} we get
 $$
  \| \theta_m
  \sigma (t, u_v(t))\|_{\call_2(l^2, H)}^2
  =\sum_{k=1}^\infty
  \int_{\R^n}
  \theta^2_m (x) \left |
  \sigma_{1,k} (t,x)
  +\kappa (x)
  \sigma_{2,k} (t,x, u_v )
  \right |^2 dx
  $$
  $$
  \le
  2
  \sum_{k=1}^\infty
  \int_{\R^n}
  \theta^2_m (x) \left |
  \sigma_{1,k} (t,x) \right  |^2 dx
  +
  2\sum_{k=1}^\infty \beta_k
  \int_{\R^n}
  \theta^2_m (x)   
   \kappa^2  (x)dx
   $$
  \be\label{ta 4}
  +
  2 \sum_{k=1}^\infty \gamma_k
  \int_{\R^n}
  \theta^2_m (x)   
   \kappa^2  (x) | u_v(t,x)| ^q dx.
   \ee
   For the last term in \eqref{ta 4} we have
   $$2 \sum_{k=1}^\infty \gamma_k
  \int_{\R^n}
  \theta^2_m (x)   
   \kappa^2  (x) | u_v(t,x)| ^q dx
   $$
   $$
   \le
   2 \sum_{k=1}^\infty \gamma_k
   \left ( \int_{\R^n}
  \theta^2_m (x)   
   |\kappa  (x)|^{\frac {2p}{p-q}} dx
   \right )^{\frac {p-q}p}
    \left ( \int_{\R^n}
  \theta^2_m (x)   
   |u_v (t,  x)|^{p} dx
   \right )^{\frac {q}p}
   $$
   \be\label{ta 5}
   \le
   \lambda_1   \int_{\R^n}
  \theta^2_m (x)   
   |u_v (t,  x)|^{p} dx
   + c_2  
     \int_{\R^n}
  \theta^2_m (x)   
   |\kappa  (x)|^{\frac {2p}{p-q}} dx,
 \ee
   where
   $c_2>0$ depends only
   on $\lambda_1, p, q$ and
   $\sum_{k=1}^\infty \gamma_k$,
   but not on $m$.
   It follows from \eqref{ta 4}-\eqref{ta 5} that
   $$\| \theta_m
  \sigma (t, u_v(t))\|_{\call_2(l^2, H)}^2
  \le 
   \lambda_1   \int_{\R^n}
  \theta^2_m (x)   
   |u_v (t,  x)|^{p} dx
   $$
    \be\label{ta 6}
   +
  2
  \sum_{k=1}^\infty  
    \int_{|x|\ge {\frac 12}m}
   |\sigma_{1,k} (t,x)    |^2 dx
   + c_3
  \int_{|x|\ge {\frac 12}m}
  \left (\kappa^2 (x) 
  + |\kappa  (x)|^{\frac {2p}{p-q}} 
  \right ) dx,
   \ee
   where $c_3>0$ depends only on
   $c_2$  and $\sum_{k=1}^\infty \beta_k$.
   
   By \eqref{ta 3}
   and \eqref{ta 6} we obtain
   $$
 {\frac {d}{dt}}
 \|\theta_m u_v (t) \|^2 
 +   \lambda_1 \int_{\R^n}
 \theta_m ^2 (x) |u_v (t,x)|^p dx
 $$
 $$
 \le
 (1+ \| v(t)\|^2_{l^2})
  \| \theta_m  u_v(t) \|^2 
  +
 c_1 m^{-\alpha} \| u_v (t) \|_V^2
 +2
  \sum_{k=1}^\infty  
    \int_{|x|\ge {\frac 12}m}
   |\sigma_{1,k} (t,x)    |^2 dx
 $$
$$
   + 
  \int_{|x|\ge {\frac 12}m}
  \left (2|\psi_1 (t,x)|
  + g^2(t,x) + c_3\kappa^2 (x) 
  + c_3|\kappa  (x)|^{\frac {2p}{p-q}} 
  \right ) dx,
  $$ 
  which implies that for all $t\in [0,T]$
  and $v\in \overline{B}
  _R(L^2(l^2, H))$,
   $$
   \|\theta_m u_v (t) \|^2 
   +
   \lambda_1 \int_0^t  \int_{\R^n}
 \theta_m ^2 (x) |u_v (s,x)|^p dx ds
 $$
 $$
   \le
   e^{ 
   \int_0^t  (1+\| v(r)\|^2_{l^2})dr
   }\| \theta_m u_0\|^2
   +
 c_1 m^{-\alpha}  \int_0^t
  e^{ 
   \int_s^t  (1+\| v(r)\|^2_{l^2})dr
   }
 \| u_v (s)\|^2_V ds
   $$
   $$ 
 +
 2
  \int_0^t
  e^{ 
   \int_s^t  (1+\| v(r)\|^2_{l^2})dr
   }  \left (
 \int_{|x| \ge {\frac 12} m}\sum_{k=1}^\infty
  | \sigma_{1,k} (s,x ) |^2 dx
  \right )  ds
  $$
  $$
  +  
  \int_0^t
  e^{ 
   \int_s^t  (1+\| v(r)\|^2_{l^2})dr
   }
   \left (
   \int_{|x| \ge {\frac 12} m}
\left (
2|\psi_1 (s,x)|
  + g^2(s,x) + c_3\kappa^2 (x) 
  + c_3|\kappa  (x)|^{\frac {2p}{p-q}} 
\right )
   dx\right )  ds
 $$
  $$  
   \le
   e^{   T+  R^2 
   }\int_{|x| \ge {\frac 12 m}
   }  |u_0 (x)|^2 dx 
   +
 c_1   e^{   T+  R^2 
   }  m^{-\alpha}  \int_0^T 
 \| u_v (s)\|^2_V ds
   $$
  \be\label{ta 7}
  +  e^{   T+  R^2 
   }
  \int_0^T 
   \int_{|x| \ge {\frac 12} m}
\left (2| \sigma_{1} (s,x ) |^2_{l^2}
+
2|\psi_1 (s,x)|
  + g^2(s,x) + c_3\kappa^2 (x) 
  + c_3|\kappa  (x)|^{\frac {2p}{p-q}} 
\right )
   dx  ds.
   \ee

  Since 
  $\sigma_1 \in L^2(0,T; L^2(\R^n, l^2))$,
  $\psi_1\in L^1(0,T; L^1(\R^n))$,
  $g\in L^2(0,T, H)$ and
  $k\in L^2(\R^n)
  \bigcap L^\infty (\R^n)$, we infer that
  \be\label{ta 8}
   \lim_{m\to \infty}
   \int_0^T 
   \int_{|x| \ge {\frac 12} m}
\left (2| \sigma_{1} (s,x ) |^2_{l^2}
+
2|\psi_1 (s,x)|
  + g^2(s,x) + c_3\kappa^2 (x) 
  + c_3|\kappa  (x)|^{\frac {2p}{p-q}} 
\right )
   dx  ds =0.
   \ee
   
    On the other hand, 
    by Lemma \ref{cosol} we know that
    there exists $c_2=c_2 (R,T)>0 $ such  that
\be\label{ta 9}
   \lim_{m\to \infty}
   m^{-\alpha}  \int_0^T 
 \| u_v (s)\|^2_V ds
 \le    \lim_{m\to \infty}
  c_2    m^{-\alpha}  =0   .
\ee
By \eqref{ta 7}-\eqref{ta 9} we infer that
for every $T>0$, $R>0$, $u_0\in H$  and $\eps>0$,
there exists $m_1=m_1(T, u_0,  R,\eps)\in \N$ such that
for all $m\ge m_1$ and $t\in [0, T]$,
$$
   \|\theta_m u_v (t) \|^2 
   +
   \lambda_1 \int_0^t  \int_{\R^n}
 \theta_m ^2 (x) |u_v (s,x)|^p dx ds
 \le \eps,
 $$ 
  which together with the fact that
  $\theta_m (x) =1$ for all $|x| \ge 1$
  completes  the proof.
   \end{proof}

 Next, we  prove  
  the 
    continuity of
    the solution 
    $u_v$ of   \eqref{contr1}-\eqref{contr2}
     with respect to the control 
  $v$ in the weak topology of  $L^2(0,T; l^2)$.

 \begin{lem}\label{wc}
 Let  \eqref{f1}-\eqref{f3}
  and \eqref{sig1}-\eqref{sig4}  be fulfilled.
  If       $v_n \to v$ weakly in 
  $L^2(0,T; l^2)$,
  then
  $ u_{v_n} \to u_v $ strongly
  in  
 $  C([0,T], H)\bigcap L^2(0,T;V)
 \bigcap L^p(0,T; L^p(\R^n))
 $. 
   \end{lem}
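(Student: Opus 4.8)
The plan is to show that along the weakly convergent sequence $v_n \rightharpoonup v$ in $L^2(0,T;l^2)$, the solutions $u_{v_n}$ converge strongly to $u_v$ in the three norms simultaneously. First I would use Lemma \ref{cosol}, in particular the uniform bound \eqref{cosol 2}, to conclude that $\{u_{v_n}\}$ is bounded in $C([0,T],H)\cap L^2(0,T;V)\cap L^p(0,T;L^p(\R^n))$, and that $\{F(\cdot,\cdot,u_{v_n})\}$ is bounded in $L^{p/(p-1)}(0,T;L^{p/(p-1)}(\R^n))$ by \eqref{f2}. From equation \eqref{contr3} and these bounds, $\{\frac{d}{dt}u_{v_n}\}$ is bounded in $L^{p/(p-1)}(0,T;(V\cap L^p(\R^n))^*)$; indeed the stochastic term is absent here and $\sigma(s,u_{v_n}(s))v_n(s)$ is controlled in $L^2(0,T;H)$ via \eqref{sig8} and the bound on $\|v_n\|_{L^2(0,T;l^2)}$ together with the $L^p$-bound on $u_{v_n}$. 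Passing to a subsequence (not relabeled), I would extract weak limits: $u_{v_n}\rightharpoonup \tilde u$ weakly-star in $L^\infty(0,T;H)$, weakly in $L^2(0,T;V)$ and in $L^p(0,T;L^p(\R^n))$, with $\frac{d}{dt}u_{v_n}\rightharpoonup \frac{d}{dt}\tilde u$ in the dual space.

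The next step is to upgrade to strong convergence in $C([0,T],H)\cap L^2(0,T;V)$ on bounded domains using compactness, and then use Lemma \ref{tail} to handle the tails. On each ball $\mathcal{O}_R=\{|x|<R\}$, the embedding $V\cap L^p \hookrightarrow H$ is compact and $H\hookrightarrow (V\cap L^p)^*$ is continuous, so by the Aubin--Lions--Simon lemma $\{u_{v_n}|_{\mathcal{O}_R}\}$ is precompact in $L^2(0,T;L^2(\mathcal{O}_R))$; combined with the tail estimate of Lemma \ref{tail}, which is uniform in $n$ since $\|v_n\|_{L^2(0,T;l^2)}$ is bounded, this yields $u_{v_n}\to \tilde u$ strongly in $L^2(0,T;H)$ on the whole space $\R^n$. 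A standard diagonal argument over $R\to\infty$ makes this rigorous. Strong $L^2(0,T;H)$ convergence then lets me identify the nonlinear drift limit: $F(\cdot,\cdot,u_{v_n})\rightharpoonup F(\cdot,\cdot,\tilde u)$ in $L^{p/(p-1)}$ by continuity of $F$ and the a priori bounds (using, e.g., the monotonicity/pseudo-monotonicity encoded in \eqref{f3}, or simply a.e. convergence along a further subsequence plus Vitali), and the diffusion limit: $\sigma(\cdot,u_{v_n})v_n \rightharpoonup \sigma(\cdot,\tilde u)v$ in $L^2(0,T;H)$, where the weak convergence of $v_n$ combines with the strong convergence of $\sigma(\cdot,u_{v_n})$ to $\sigma(\cdot,\tilde u)$ in the operator norm (Lemma \ref{cosol}'s Lipschitz-type estimate \eqref{sig9} gives $\|\sigma(t,u_{v_n}(t))-\sigma(t,\tilde u(t))\|_{\call_2(l^2,H)}\to 0$ in an appropriate integral sense). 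Passing to the limit in the weak formulation \eqref{contr3} shows $\tilde u$ solves \eqref{contr1}--\eqref{contr2} with control $v$; by the uniqueness part of Lemma \ref{cosol}, $\tilde u = u_v$, and since every subsequence has a further subsequence converging to the same limit, the full sequence converges.

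It remains to promote strong $L^2(0,T;H)$ convergence to strong convergence in $C([0,T],H)$, $L^2(0,T;V)$ and $L^p(0,T;L^p(\R^n))$. For this I would subtract the equations for $u_{v_n}$ and $u_v$, test with $u_{v_n}-u_v$, and run an energy estimate exactly as in the proof of \eqref{cosol 1}: using \eqref{f3}, \eqref{elity1}, \eqref{sig9} and Young's inequality one obtains a differential inequality for $\|u_{v_n}(t)-u_v(t)\|^2$ whose right-hand side consists of (i) a Gronwall factor $(c_6+\|v_n(t)\|_{l^2}^2+\|u_v(t)\|^p_{L^p(\R^n)})\|u_{v_n}(t)-u_v(t)\|^2$, and (ii) a forcing term of the form $(\sigma(t,u_v(t))(v_n(t)-v(t)),u_{v_n}(t)-u_v(t))$ plus terms that vanish because $u_{v_n}\to u_v$ in $L^2(0,T;H)$. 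The crux — and the main obstacle — is showing that the forcing integral $\int_0^T(\sigma(s,u_v(s))(v_n(s)-v(s)),u_{v_n}(s)-u_v(s))\,ds$ tends to zero; here I would write it as $\int_0^T(\sigma(s,u_v(s))(v_n(s)-v(s)),u_v(s))\,ds$, which $\to 0$ by weak convergence of $v_n$ since $s\mapsto \sigma(s,u_v(s))^*u_v(s)\in L^2(0,T;l^2)$, plus $\int_0^T(\sigma(s,u_v(s))(v_n(s)-v(s)),u_{v_n}(s)-u_v(s))\,ds$ bounded by $\|v_n-v\|_{L^2(0,T;l^2)}$ times a factor involving $\|u_{v_n}-u_v\|$ that is small. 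Feeding this into Gronwall (the factor $\|v_n\|^2_{l^2}$ being integrable uniformly in $n$) yields $u_{v_n}\to u_v$ in $C([0,T],H)$, and the integrated $\|(-\Delta)^{\alpha/2}(u_{v_n}-u_v)\|^2$ and $\|u_{v_n}-u_v\|^p_{L^p}$ terms on the left-hand side then give convergence in $L^2(0,T;V)$ and $L^p(0,T;L^p(\R^n))$ as well, completing the proof.
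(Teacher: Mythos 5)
Your proposal is correct in substance and follows the same global strategy as the paper: uniform bounds from Lemma \ref{cosol}, compactness obtained by combining compact embeddings on bounded sets with the uniform tail estimates of Lemma \ref{tail} (valid uniformly in $n$ because weakly convergent controls are norm-bounded), identification of the limit through the weak formulation with the stochastic-control term split into a strongly convergent operator factor times the weakly convergent control, and a final monotonicity/energy estimate using \eqref{f3} and \eqref{elity1} to upgrade to $C([0,T],H)\bigcap L^2(0,T;V)\bigcap L^p(0,T;L^p(\R^n))$. Where you genuinely differ is in the compactness vehicle and the final estimate. The paper proves equicontinuity of $u_{v_n}$ in $(V\bigcap L^p(\R^n))^*$ directly from the equation, gets strong convergence in $C([0,T],(V\bigcap L^p(\R^n))^*)$ by Arzel\`a--Ascoli (with the tail decomposition giving pointwise precompactness), and then interpolates against the $L^2(0,T;V\bigcap L^p)$ bound to obtain strong convergence in $L^4(0,T;H)$; you instead invoke Aubin--Lions--Simon on balls plus a diagonal argument to get strong convergence in $L^2(0,T;H)$, and you close the final step with a Gronwall inequality modeled on the proof of \eqref{cosol 1}, whereas the paper closes it without Gronwall by bounding the whole cross term by a constant times $\|u_{v_n}-u_v\|_{L^4(0,T;H)}$ via \eqref{sig8}. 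Both routes work; the paper's interpolation to $L^4(0,T;H)$ is precisely what lets it absorb the time-integrable weights $\|\sigma(s,\cdot)\|^2_{\call_2(l^2,H)}$ and $\|v_n(s)\|^2_{l^2}$ cleanly, while your Aubin--Lions route is more standard but requires the time-derivative bound and the localization of the fractional operator to balls.

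Two small repairs are needed in your write-up. First, the decomposition of the crux term is miswritten: you claim to split $\int_0^T(\sigma(s,u_v)(v_n-v),u_{v_n}-u_v)\,ds$ into $\int_0^T(\sigma(s,u_v)(v_n-v),u_v)\,ds$ plus the original term, which is not an identity; in fact the weak-convergence piece is unnecessary -- the direct bound $\|v_n-v\|_{L^2(0,T;l^2)}\bigl(\int_0^T\|\sigma(s,u_v(s))\|^2_{\call_2(l^2,H)}\|u_{v_n}(s)-u_v(s)\|^2ds\bigr)^{1/2}$ suffices, since the first factor is merely bounded and the second tends to zero. Second, to make that second factor (and the analogous terms with weights $\|v_n\|^2_{l^2}$, $\|u_v\|^p_{L^p(\R^n)}\in L^1(0,T)$) vanish, plain strong convergence in $L^2(0,T;H)$ is not immediately enough; you should note that the uniform bound in $C([0,T],H)$ upgrades it to $L^4(0,T;H)$ (as $\|d_n\|_{L^4}^4\le \|d_n\|^2_{L^\infty}\|d_n\|^2_{L^2}$), after which Cauchy--Schwarz in time, exactly as in the paper's \eqref{wc 53}, finishes the argument.
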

 
 \begin{proof}
 We first derive the weak convergence  
 of the sequence
 $\{u_{v_n}\}_{n=1}^\infty$ and then consider the
 strong convergence   as  
  $v_n \to v$ weakly in 
  $L^2(0,T; l^2)$, based on the uniform estimates of solutions.  
  
  {\bf Step 1: weak convergence 
   of $\{u_{v_n}\}_{n=1}^\infty$.}
  By the weak convergence of 
 $\{v_n\}_{n=1}^\infty $  in 
  $L^2(0,T; l^2)$,  for every $u_0 \in H$, it follows
  from Lemma \ref{cosol} that 
  \be\label{wc 1}
  \| u_{v_n}\|_{C([0,T], H)}
  + \| u_{v_n}\|_{L^2(0,T; V)  }
  + \| u_{v_n}\|_{L^p(0,T; L^p(\R^n) )  }
  + \| v_n \|_{L^2(0,T;l^2)  }
  \le c_1, \quad \forall \ n\in \N,
  \ee
  where   $c_1=c_1(T)>0$  depends only  on $T$.
  By \eqref{f2}  and \eqref{wc 1} we have
   \be\label{wc 2}
   \| F(\cdot, \cdot, u_{v_n} )\| 
   _{L^q(0,T; L^q(\R^n))}
  \le c_2,
  \ee
   where $c_2=c_2(T)>0$.

  By \eqref{wc 1}
  and \eqref{wc 2}
   we  find  that
  there exists
  $\tilde{u} \in 
  L^\infty(0,T; H)\bigcap
  {L^2(0,T; V)  }\bigcap
  {L^p(0,T; L^p(\R^n) )  }$,
  $\chi \in L^q(0,T; L^q(\R^n))$
  such that,  up to a subsequence, 
  \be\label{wc 7}
  u_{v_n}
  \to  \tilde{u}
  \ \text{ weak-star  in  }  \  L^\infty(0,T; H),
  \ee
   \be\label{wc 8}
  u_{v_n}
  \to  \tilde{u}
  \ \text{ weakly  in  }  \  L^2(0,T; V),
  \ee 
   \be\label{wc 9}
  u_{v_n}
  \to  \tilde{u}
  \ \text{ weakly  in  }  \   L^p(0,T; L^p(\R^n) ) ,
  \ee
  and
  \be\label{wc 10}
  F(\cdot,\cdot, u_{v_n})
  \to \chi
  \ \text{ weakly  in  }  \   L^q(0,T; L^q(\R^n) ) .
  \ee
  
  Next, we prove a strong 
  convergence of $\{u_{v_n}\}_{n=1}^\infty
  $.

  {\bf Step 2: strong  convergence
   of $\{u_{v_n}\}_{n=1}^\infty
  $ in 
  $C([0,T], 
   (V\bigcap L^p(\R^n)   )^*)$
  }.  We now prove:
   \be\label{wc 20}
     u_{v_n}
  \to \tilde{u}
  \ \text{ strongly  in  }  \  C([0,T], 
   (V\bigcap L^p(\R^n)   )^*),
  \ee 
  by the   Arzela-Ascoli theorem.
  We first
  prove 
  the sequence 
   $\{u_{v_n}\}_{n=1}^\infty$
   is equicontinuous  in 
   $ 
   (V\bigcap L^p(\R^n)   )^* $ on $[0,T]$, and then
   prove  
  $\{ u_{v_n} (t)\}_{n=1}^\infty$ is precompact
  in $ 
    (V\bigcap L^p(\R^n)   )^* $ for
    every fixed  $t\in [0,T]$.

    Note that the solution $u_{v_n}$ of \eqref{contr1}-\eqref{contr2}
   satisfies \eqref{contr3}
   in $\left (V\bigcap L^p(\R^n)\right )^*$
   with $v$ replaced by $v_n$.
   Therefore, we have 
 for all $0\le  s \le t \le T$,
  $$
 \| u_{v_n} (t)- u_{v_n} (s)\|
 _{\left (V\bigcap L^p(\R^n)\right )^*}
\le \int_s^t
\|  (-\Delta)^\alpha u_{v_n}(r) 
\|_{V^*}dr
$$
  \be\label{wc 23}
  +\int_s^t \|  F(r,\cdot, u_{v_n}(r)) \|_{L^q(\R^n)}dr
  +   \int_s^t \| g(r) \| dr
  +\int_s^t \| \sigma (r, u_{v_n}(r)) v_n(r)\| ds.
\ee
By \eqref{wc 1}-\eqref{wc 2} we  see  
$$
   \int_s^t
\|  (-\Delta)^\alpha u_{v_n}(r) 
\|_{V^*}dr
+
   \int_s^t \|  F(r,\cdot, u_{v_n}(r)) \|_{L^q(\R^n)}dr
  +   \int_s^t \| g(r) \| dr
  $$
     \be\label{wc  24}
  \le   
  ( c_1 + \| g \|_{L^2(0,T; H)} ) (t-s)^{\frac 12} 
 +   c_2 (t-s)^{\frac 1p}. 
  \ee
  By \eqref{sig5a}  and \eqref{wc 1} we obtain
  $$
 \int_s^t \| \sigma (r, u_{v_n}(r)) v_n(r)\| dr
 \le
   \left (
   \int_s^t \| \sigma (r, u_{v_n}(r))\|^2_{\call_2(l^2,H)}
   dr
   \right )^{\frac 12}
\|  v_n \|_{L^2(0,T; l^2 ) }   
 $$
 $$
  \le
  c_1 \left (
   \int_s^t 
   \ ( 
   2\|\sigma_1(r) \|^2_{L^2(\R^n, l^2)}
   + 2\sum_{k=1}^\infty \beta_k \|\kappa\|^2
   +2\sum_{k=1}^\infty \gamma_k
   \|\kappa\|^2_{L^{\frac {2p}{p-q}}
   (\R^n)}
   \|u_{v_n} (r)\|^q_{L^p(\R^n)}
    )dr
   \right )^{\frac 12} 
  $$
  $$
  \le c_1 c_3 (t-s)^{\frac 12}
  + c_1 c_4 
  \left (\int _s^t
  \|u_{v_n} (r)\|^q_{L^p(\R^n)}dr
  \right )^{\frac 12}
  $$
   $$
  \le c_1 c_3 (t-s)^{\frac 12}
  + c_1 c_4 \| u_{v_n}\|^{\frac q2}_{L^p(0,T;L^p(\R^n))}
  (t-s)^{\frac {p-q}{2p}} 
  $$
  \be\label{wc 25}
  \le c_1 c_3 (t-s)^{\frac 12}
  + c_1^{1+   {\frac q2}}  c_4   
  (t-s)^{\frac {p-q}{2p}} ,
  \ee 
  where
  $c_3
  =\left (
  2\|\sigma_1\|^2_{L^\infty(0,T;  L^2(\R^n, l^2))}
   + 2\sum_{k=1}^\infty \beta_k \|\kappa\|^2
   \right )^{\frac 12}$
   and $c_4=
   \left ( 
    2\sum_{k=1}^\infty \gamma_k \right )^{\frac 12}
   \|\kappa\| _{L^{\frac {2p}{p-q}}
   (\R^n)}
   $.
   By  \eqref{wc 23}-\eqref{wc 25}
  we obtain, 
  for all $0\le  s \le t \le T$,
  \be\label{wc 26}
 \| u_{v_n} (t)- u_{v_n} (s)\|
 _{\left (V\bigcap L^p(\R^n)\right )^*}
\le c_5 \left ( (t-s)^{\frac 12} +
(t-s)^{\frac 1p}
+ (t-s)^{\frac {p-q}{2p}}
\right ),
\ee
where $c_5=c_5(T)>0$.

  Since $p>q$, by \eqref{wc 26} we see that
  the sequence 
   $\{u_{v_n}\}_{n=1}^\infty$
   is equicontinuous  in 
   $ 
   (V\bigcap L^p(\R^n)   )^* $ on $[0,T]$.
   It remains to show  
  $\{ u_{v_n} (t)\}_{n=1}^\infty$ is precompact
  in $ 
    (V\bigcap L^p(\R^n)   )^* $ for
    every fixed  $t\in [0,T]$.

     Since  the Sobolev  embeddings are not compact
     on unbounded domains,  we need to 
     employ the uniform tail-ends estimates of solutions
     to prove the precompactness 
     of the sequence $\{ u_{v_n} (t)\}_{n=1}^\infty$  
  in $ 
    (V\bigcap L^p(\R^n)   )^* $
    as in \cite{wangJDE2019} where
    the    noise with linear growth was discussed.
    To that end, we
     decompose
     the solution
     $u_{v_n}$ as
     $$ u_{v_n} (t,x)=
     u^{1,m}  _{v_n} (t,x)
        +
          u^{2,m}  _{v_n} (t,x),
          \quad \forall \  t\in [0,T],\
          x\in \R^n,
          $$
          where $u^{1,m}$ and $u^{2,m}$
          are defined in terms of the cutoff function
          $\theta$ in \eqref{cutoff} as follows:
           \be\label{wc 30}
        u^{1,m}  _{v_n} (t,x)
        =\theta (
        {\frac {x}m}
         )  u _{v_n} (t,x) 
        \quad \text{and}
        \quad
          u^{2,m}  _{v_n} (t,x)
        =(1- \theta (
        {\frac {x}m}
         )  )  u _{v_n} (t,x), 
        \ee
         for all $m\in \N$, $t\in [0,T]$  and
        $x\in \R^n$.
        
        By
          \eqref{wc 1}
         and   the uniform estimates on the tails
         of solutions as obtained in Lemma \ref{tail},
           we  infer that
          for every $\eps>0$ and $u_0\in H$, there exists
          $m_1=m_1(T, u_0, \eps)>0$ such that
          for all   $t\in [0,T]$,
          $$
          \int_{|x|\ge {\frac 12} m_1}
          |u_{v_n} (t,x)|^2 dx <{\frac 18} \eps^2,
          \quad \forall \  n\in \N,
          $$
          and hence for all
        $ n\in \N$ and  $t\in [0,T]$,
          \be\label{wc 31}
       \| u^{1,m_1}_{v_n} (t )\|^2
         \le
       \int_{|x|\ge {\frac 12} m_1}
        |u_{v_n} (t,x)|^2 dx<{\frac 18} \eps^2.
   \ee
         Let 
         $ H_{m_1} =   \{u\in H:   
            u(x) =0  \
          \text{a.e. on }  |x|\ge m_1\} 
          $
          and
           $ V_{m_1} =   \{u\in V:   
            u(x) =0  \
          \text{a.e. on }  |x|\ge m_1\}
          $.
          Then
          the embedding 
          $H_{m_1}^* \hookrightarrow
          V_{m_1}^*$  is compact.
          Note that  for all $t\in [0,T]$ and $n\in \N$,
          by \eqref{wc 1} we have 
        $\|       u^{2,m_1}_{v_n } (t)\|_{ H_{m_1}} 
          \le c_1$,  and hence 
        the sequence  $\{ u^{2,m_1}
         _{v_n }(t)\}_{n=1}^\infty$
         is precompact in  
        $  (V\bigcap L^p(\R^n)   )^*$,
        which together with \eqref{wc 31}
        shows that  
    $  \{u_{v_n} (t)\}_{n=1}^\infty = 
          \{ u^{1,m_1}_{v_n} (t)
         +  u^{2,m_1}_{v_n} (t)\}
         _{n=1}^\infty$
         has a finite open cover with
         radius $\eps$ in  
         $(V\bigcap L^p(\R^n)   )^*$, and thus
         $  \{u_{v_n} (t)\}_{n=1}^\infty$ is
         precompact in  
         $(V\bigcap L^p(\R^n)   )^*$.
         As a result, \eqref{wc 20} follows from
        the Arzela-Ascoli theorem and \eqref{wc 7}.

In the next step, we further improve 
   the strong convergence of
  $\{u _{v_n}  \}_{n=1}^\infty$
  from the space \\
  $
  C([0,T], (V\bigcap L^p(\R^n)   )^*)$
 to the space $L^4(0,T; H)$.

  {\bf Step 3: strong  convergence
  of   $\{u _{v_n}  \}_{n=1}^\infty$ in
   $L^4(0,T; H)$.} We now prove
      \be\label{wc 40}
         u_{v_n}      \to  u_v
         \ \text{ strongly in } \
         L^4(0,T; H).
         \ee
     First we have    
          $$
         \int_0^T
         \| u_{v_n} (t)  - \tilde{u}  (t)\|^4 dt
         =\int_0^T
         (u_{v_n} (t)  - \tilde{u}  (t),\
         u_{v_n} (t)  - \tilde{u}  (t))^2_{
          (V\bigcap L^p(\R^n),
          (V\bigcap L^p(\R^n)   )^* ) 
         }
         $$
         $$
         \le  
          \| u_{v_n}   - \tilde{u} \|_{C([0,T],
         (V\bigcap L^p(\R^n)   )^*)  }^2  
         \int_0^T 
         \| u_{v_n} (t)  - \tilde{u} (t)\|_{
        V\bigcap L^p(\R^n)  }^2 dt
       $$ 
       $$
         \le   2
         \left (
         \| u_{v_n}   \|_{L^2(0,T;
        V\bigcap L^p(\R^n) )}^2
        +
        \|\tilde{u}   \|_{L^2(0,T;
        V\bigcap L^p(\R^n) )}^2
         \right )
          \| u_{v_n}   - \tilde{u} \|_{C([0,T],
         (V\bigcap L^p(\R^n)   )^*)  }^2  .
        $$ 
        Then by  
          \eqref{wc 1}
         and \eqref{wc 20} we obtain 
        \be\label{wc 41}
         u_{v_n}      \to \tilde{u}
         \ \text{ strongly in } \
         L^4(0,T; H).
         \ee    
         
         It remains to show 
           $\tilde{u} =u_v$; that is,
           $\tilde{u}$ is the solution of
           \eqref{contr1}-\eqref{contr2}.
           Since $u_{v_n}$ is the solution
           of
                      \eqref{contr1}-\eqref{contr2}
                      with $v$ replaced by $v_n$,
                      by 
         \eqref{contr3} we see that
       for  $0 \le t\le T$
       and $\xi \in  V\bigcap L^p(\R^n) $,
   \be\label{wc 42}
  (u_{v_n} (t), \xi)
  +\int_0^t 
  ( (-\Delta)^{\frac {\alpha}2}  u_{v_n}(s),
  (-\Delta)^{\frac {\alpha}2}  \xi
  ) ds
  +\int_0^t \int_{\R^n} F(s,x, u_{v_n}(s))
  \xi (x) dx  ds
  $$
  $$
  =(u_0,\xi)  +\int_0^t (g(s),\xi)  ds
  +\int_0^t 
  (\sigma (s, u_{v_n}(s)) v_n(s), \xi)  ds.
\ee 
We  will take the
limit of 
 \eqref{wc 42} to show 
  $\tilde{u} =u_v$, by starting
   with the last term in \eqref{wc 42}
  which has a superlinear growth in $u_{v_n}$.
    We claim
  that for every
       $t\in [0,T]$,
       \be\label{wc 43}
      \lim_{n\to \infty}
       \int_0^t
       \left ( \sigma (s, u_{v_n}(s))-
        \sigma (s, \tilde{u}  (s)) \right )
         v_n(s)   ds  =0
         \ \text{ in } \ H.
         \ee
    By \eqref{sig9}
    and \eqref{wc 1} we  get
     $$
     \|  \int_0^t
       \left ( \sigma (s, u_{v_n}(s))-
        \sigma (s, \tilde{u}  (s)) \right )
         v_n(s)   ds\|
         \le
          \int_0^t
       \| \sigma (s, u_{v_n}(s))-
        \sigma (s,  \tilde{u} (s))\|_{\call_2(l^2, H)}
        \|
         v_n(s)\|_{l^2}   ds
         $$
         $$
          \le \left (
          \int_0^t
       \| \sigma (s, u_{v_n}(s))-
        \sigma (s,  \tilde{u} (s))\|^2_{\call_2(l^2, H)}
        ds
        \right )^{\frac 12}
        \|
         v_n \|_{L^2(0,T; l^2) }   ds
         $$
         $$
          \le  c_6 \left (
          \int_0^t 
          (
          1+ \|u_{v_n} (s)\|^{\frac p2}
          _{L^p(\R^n)}
          +
          \|\tilde{u}  (s)\|^{\frac p2}
          _{L^p(\R^n)}
          )  \|u_{v_n} (s) -\tilde{u} (s)\|
        ds
        \right )^{\frac 12} 
         $$
         $$
          \le  c_6 \left (
          \int_0^T
         3 (
          1+ \|u_{v_n} (s)\|^{p}
          _{L^p(\R^n)}
          +
          \|\tilde{u}  (s)\|^{p}
          _{L^p(\R^n)}
          )  
        ds
        \right )^{\frac 14} \|u_{v_n}  -\tilde{u}  \|
        ^{\frac 12} _{L^2(0,T; H)}
         $$
         $$
          \le  c_7    \|u_{v_n}  -\tilde{u}  \|
        ^{\frac 12} _{L^2(0,T; H)},
         $$
       where $c_7=c_7(T)>0$, which along with 
       \eqref{wc 41} yields \eqref{wc 43}.

         One can  also 
      verify  that
       for every
       $t\in [0,T]$,
              \be\label{wc 44}
      \lim_{n\to \infty}
       \int_0^t
         \sigma (s, \tilde{u}  (s))  
         v_n(s)   ds  =  \int_0^t
         \sigma (s, \tilde{u}  (s))  
         v (s)   ds
         \ \text{ weakly  in } \ H.
         \ee
         Indeed,  for every $t\in [0,T]$
         and $v_0\in L^2(0,T; l^2)$,
         by \eqref{sig6} we have
          $$
            \int_0^t \|
         \sigma (s, \tilde{u}  (s))  
         v_0 (s)     \| ds 
           \le
         \left ( \int_0^T \| \sigma (s, \tilde{u}  (s))\|^2
         _{\call_2(l^2,H)}  ds
         \right )^{\frac 12}
           \| v_0   \|_{L^2(0,T; l^2)}^2   
           $$
      $$ 
          \le
         \left ( \int_0^T
         \left (c_8  +  \| \tilde{u} (s) \|^p_{L^p(\R^n)}
         +2 \| \sigma_1  \|^2_{L^\infty(0,T;  L^2(\R^n,
          l^2)) } \right )
          ds
         \right )^{\frac 12}  \| v_0   \|_{L^2(0,T; l^2)}^2   
           $$
         \be\label{wc 45}
           \le c_9 \| v_0   \|_{L^2(0,T; l^2)}^2 ,
        \ee
           where $c_9=c_9   (T)>0$.

           Consider the operator   
         $\calg: L^2(0,T; l^2)
         \to H$  given by
         $$
         \calg (v_0)
         =\int_0^t  \sigma (s, \tilde{u}  (s))  
         v_0 (s)   ds,
         \quad \forall \  v_0
         \in L^2(0,T; l^2).
         $$
         Then \eqref{wc 45}
         indicates that
           $\calg 
         $ is a linear bounded operator 
         and  thus  it is weakly continuous,
         which implies  \eqref{wc 44}
         due to  
         $v_n \to v$ weakly in 
         $L^2(0,T; l^2)$.

         By \eqref{wc 43} and \eqref{wc 44}  we get
        for all $t\in [0,T]$ and
        $\xi \in V\bigcap L^p(\R^n)$,
    \be\label{wc 46} 
     \lim_{n\to \infty}
      \int_0^t 
  (\sigma (s, u_{v_n}(s)) v_n(s), \xi)  ds
  =   \int_0^t  
  (\sigma (s, \tilde{u}  (s)) 
   v(s), \ \xi)   ds.
   \ee

         On the other hand,
         by the
         standard argument
         (see, e.g., \cite{wangJDE2023}), we can obtain from
         \eqref{wc 10} that
           $\chi = F(\cdot, \cdot, \tilde{u} )$ and thus
         \be\label{wc 47}
         F(\cdot , \cdot, u_{v_n})
         \to F(\cdot, \cdot ,  \tilde{u} )
             \ \text{ weakly in  } \
         L^q(0,T; L^q(\R^n)).
       \ee

    Letting $n\to \infty$ in \eqref{wc 42},
   it follows from  
    \eqref{wc 7}-\eqref{wc 8} and
         \eqref{wc 46}-\eqref{wc 47} 
         that 
         for all $0 \le t\le T$
       and $\xi \in  V\bigcap L^p(\R^n) $,
  $$
  (\tilde{u}  (t), \xi)
  +\int_0^t 
  ( (-\Delta)^{\frac {\alpha}2}   \tilde{u} (s),
  (-\Delta)^{\frac {\alpha}2}  \xi
  ) ds
  +\int_0^t \int_{\R^n} F(s,x,  \tilde{u}  (s))
  \xi (x) dx  ds
  $$
  $$
  =(u_0,\xi)  +\int_0^t (g(s),\xi)  ds
  +\int_0^t 
  (\sigma (s,  \tilde{u}  (s)) v (s), \xi)  ds,
$$  
and hence
 $\tilde{u}$ is the solution of \eqref{contr1}-\eqref{contr2},
 that is, $\tilde{u} =u_v$.
 Then \eqref{wc 40} follows from \eqref{wc 41}
 immediately.

  {\bf Step 4: convergence
  of $\{u_{v_n}\}_{n=1}^\infty$
  in  $C([0,T], H) \bigcap L^2(0,T; V)
  \bigcap L^p(0,T; L^p(\R^n))$}.   We finally prove:
          \be\label{wc 50}
         u_{v_n}      \to  u_v
         \ \text{ strongly in } \
         C([0,T], H) \bigcap L^2(0,T; V)
         \bigcap L^p(0,T; L^p(\R^n)).
         \ee
    By \eqref{f3} and \eqref{contr1}-\eqref{contr2}
     we  get
       $$
     {\frac d{dt}}
     \|  u_{v_n}   (t) - u_v  (t)\|^2
     +2  \| (-\Delta)^{\frac {\alpha}2}
     ( u_{v_n}   (t) - u_v  (t) )\|^2
     $$
     $$
     + 2\lambda_2
     \int_{\R^n}
     \left (
     |u_{v_n}(t,x)|^{p-2} u_{v_n} (t,x)
     -
          |u_{v }(t,x)|^{p-2} u_{v } (t,x)
          \right ) (u_{v_n} (t,x)-u_{v } (t,x)) dx
     $$
     $$
    \le  2 \int_{\R^n} \psi_4 (t,x) |u_{v_n} (t,x)-u_{v } (t,x)|^2
    dx
      $$
     \be\label{wc 51}
     +2
     \left (
     \sigma (t,  u_{v_n} (t))v_n(t)-
     \sigma (t,  u_v(t))v (t),
     \   u_{v_n}   (t) - u_v  (t)
     \right ).
     \ee
      By \eqref{elity1}
      and \eqref{wc 51} we have
      for all $t\in [0,T]$,
        $$ 
     \|  u_{v_n}   (t) - u_v  (t)\|^2
     +2  \int_0^t \| (-\Delta)^{\frac {\alpha}2}
     ( u_{v_n}   (s) - u_v  (s) )\|^2 ds
     $$
     $$
     + 2^{2-p} \lambda_2\int_0^t 
       \|u_{v_n} (s)-u_{v } (s )\|^p_{L^p(\R^n)}  ds
     $$
     $$
     \le  2\| \psi_4 \|_{L^\infty(0,T;
     L^\infty (\R^n))}
     \int_0^T\|  u_{v_n} (s )-u_{v } (s)\|^2
    ds
      $$
     \be\label{wc 52}
     +2\int_0^T
     \left |\left (
     \sigma (s,  u_{v_n} (s))v_n(s)-
     \sigma (s,  u_v(s))v (s),
     \   u_{v_n}   (s) - u_v  (s)
     \right )  \right | ds.
     \ee

     By \eqref{sig8} and \eqref{wc 1}, for the last term
     in \eqref{wc 52}, we have for all $t\in [0,T]$,
      $$
       2\int_0^T
     \left |\left (
     \sigma (s,  u_{v_n} (s))v_n(s)-
     \sigma (s,  u_v(s))v (s),
     \   u_{v_n}   (s) - u_v  (s)
     \right )  \right | ds
      $$  
      $$
      \le
       2\int_0^T
     \|\sigma (s,  u_{v_n} (s))\|_{\call_2(l^2, H)}
     \|v_n(s)\|_{l^2}  
     \|u_{v_n}   (s) - u_v  (s)\| ds
     $$
      $$
     +  2\int_0^T
     \|\sigma (s,  u_{v } (s))\|_{\call_2(l^2, H)}
     \|v (s)\|_{l^2}  
     \|u_{v_n}   (s) - u_v  (s)\| ds
     $$
     $$
      \le
       2
        \|v_n\|_{L^2(0,T; l^2)}
       \left (\int_0^T
     \|\sigma (s,  u_{v_n} (s))\|^2_{\call_2(l^2, H)}  
     \|u_{v_n}   (s) - u_v  (s)\|^2 ds
     \right )^{\frac 12}
     $$
      $$
     +  
     2
        \|v\|_{L^2(0,T; l^2)}
       \left (\int_0^T
     \|\sigma (s,  u_{v} (s))\|^2_{\call_2(l^2, H)}  
     \|u_{v_n}   (s) - u_v  (s)\|^2 ds
     \right )^{\frac 12}
      $$
      $$
      \le
       2
        \|v_n\|_{L^2(0,T; l^2)}
       \left (\int_0^T 
       \left (c _{10} + c_{10} 
       \left (1+ \| u_{v_n} (s) 
       \|^{\frac p2}_{L^p(
       \R^n)}  \right )  \| u_{v_n} (s)\| \right  )
     \|u_{v_n}   (s) - u_v  (s)\|^2 ds
     \right )^{\frac 12}
     $$
      $$
     +  
     2
        \|v\|_{L^2(0,T; l^2)}
       \left (\int_0^T 
       \left (c _{10} + c_{10} 
       \left (1+ \| u_{v} (s) \|^{\frac p2}_{L^p(
       \R^n)}  \right ) \| u_{v} (s)\| \right )
     \|u_{v_n}   (s) - u_v  (s)\|^2 ds
     \right )^{\frac 12}
      $$
      $$
      \le
       2
        \|v_n\|_{L^2(0,T; l^2)} 
       \left (\int_0^T 
       \left 
       (c _{10} + c_{10} 
       \left (1+ \| u_{v_n} (s) \|^{\frac p2}_{L^p(
       \R^n)}  \right )
       \| u_{v_n}  \| _{C([0,T],
        H)}\right )
     \|u_{v_n}   (s) - u_v  (s)\|^2 ds
     \right )^{\frac 12}
     $$
      $$
     +  
     2
        \|v\|_{L^2(0,T; l^2)}
       \left (\int_0^T 
       \left (c _{10} + c_{10} 
       \left (1+ \| u_{v} (s) \|^{\frac p2}_{L^p(
       \R^n)} \right ) \| u_{v}  \|_{C([0,T], H)}
       \right )
     \|u_{v_n}   (s) - u_v  (s)\|^2 ds
     \right )^{\frac 12}
      $$
      $$
      \le
      c_{11}
       \left (\int_0^T 
        \left (1+ \| u_{v_n} (s) \|^{\frac p2}_{L^p(
       \R^n)}  \right ) 
     \|u_{v_n}   (s) - u_v  (s)\|^2 ds
     \right )^{\frac 12}
     $$
      $$
     +  
     c_{11}
       \left (\int_0^T 
        \left (1+ \| u_{v} (s) \|^{\frac p2}_{L^p(
       \R^n)}  \right ) 
     \|u_{v_n}   (s) - u_v  (s)\|^2 ds
     \right )^{\frac 12}
      $$
      $$
      \le
      c_{11}
       \left (\int_0^T 
        \left (2+ 2\| u_{v_n} (s) \|^{p}_{L^p(
       \R^n)}  \right )  ds \right )^{\frac 14}
     \|u_{v_n}   - u_v  \|_{L^4(0,T; H)}  
       $$
       $$+
      c_{11}
       \left (\int_0^T 
        \left (2+ 2\| u_{v} (s) \|^{p}_{L^p(
       \R^n)}  \right )  ds \right )^{\frac 14}
     \|u_{v_n}   - u_v  \|_{L^4(0,T; H)}  
       $$
      $$
      \le
      c_{11} 
        \left (2T + 2 c_1^p 
        \right )^{\frac 14}
     \|u_{v_n}   - u_v  \|_{L^4(0,T; H)}  
     $$
    \be\label{wc 53}
        +
      c_{11}
       \left (\int_0^T 
        \left (2+ 2\| u_{v} (s) \|^{p}_{L^p(
       \R^n)}  \right )  ds \right )^{\frac 14}
     \|u_{v_n}   - u_v  \|_{L^4(0,T; H)} , 
     \ee
      where $c_{11} =c_{11}(T)>0$.
      
      It follows  from \eqref{wc 40} and
      \eqref{wc 53} that
     \be\label{wc 55}
      \lim_{n\to \infty}
       2\int_0^T
     \left |\left (
     \sigma (s,  u_{v_n} (s))v_n(s)-
     \sigma (s,  u_v(s))v (s),
     \   u_{v_n}   (s) - u_v  (s)
     \right )  \right | ds =0.
     \ee
      Then \eqref{wc 50} follows from
      \eqref{wc 40}  and
      \eqref{wc 55} immediately.
      This completes the proof. 
   \end{proof}

    In order to  
    investigate the uniform LDP
    of solutions of \eqref{intr1}-\eqref{intr3},
    for every $u_0\in H$,
    we need an operator
   $\calg^0_{u_0}: C([0,T], U) \to C([0,T], H)
  \bigcap L^2(0,T; V)
  \bigcap
  L^p(0,T; \R^n)$ which is defined by,
   for  $\xi \in  C([0,T], U)$, 
  \be\label{calg0}
\calg^0_{u_0} (\xi)
=
\left \{
\begin{array}{ll}
u_v(\cdot, u_0),  & \text{ if } \xi= \int_0^\cdot v(t) dt
\ \text{ with } v\in L^2(0,T; l^2);\\
0, &  \text{ otherwise} ,
\end{array}
\right.
\ee
 where $u_v(\cdot, u_0)$ is the
 solution
 of \eqref{contr1}-\eqref{contr2}
 defined on $[0,T]$
 with initial value $u_0$.

  For every  $u_0\in H$, 
  define a rate function
  $I_{u_0}:
    C([0,T], H)
  \bigcap L^2(0,T; V) \bigcap
  L^p(0,T; \R^n) \to [0, +\infty]$ by, 
  for each $\phi \in   C([0,T], H)
  \bigcap L^2(0,T; V) \bigcap
  L^p(0,T; \R^n)  $,
\be\label{rate}
 I_{u_0} (\phi)
 =\inf
 \left \{
 {\frac 12} \int_0^T \| v(s)\|_{l^2} ^2 ds:
 \ v\in L^2(0,T; l^2), \ u_v(\cdot, u_0)  =\phi
 \right \},
\ee
 where $u_v(\cdot, u_0)$ is the
 solution
 of \eqref{contr1}-\eqref{contr2}.
By default,   $\inf \emptyset =+\infty$.
 
 By Lemma \ref{wc}, one can verify that
 the rate function $I_{u_0}$ as given by
 \eqref{rate} is a good rate function, and
 the infimum of \eqref{rate} is actually a minimum
 when $I_{u_0} (\phi)<\infty$.
 Moreover, 
   for every $N<\infty$, the set
\be\label{comg0}
 \left \{
\calg^0_{u_0}
\left (
\int_0^\cdot v(t) dt
\right ) : \  v\in {\overline{B}}_N(L^2(0,T; l^2))
\right \}
\ \ \text{is a compact subset
of  } \ 
\ee
 $ C([0,T], H)
  \bigcap L^2(0,T; V)\bigcap L^p
  (0,T; L^p(\R^n))$.

 \begin{lem}\label{geps}
  If  \eqref{f1}-\eqref{f3}
  and \eqref{sig1}-\eqref{sig4}
  are fulfilled,  then for every
  $u_0\in H$,  $T>0$    and
 $v\in \cala_N$
with $N<\infty$,  the process
 $u^\eps_v
=
 \calg^\eps_{u_0}
\left (
W +\eps^{-\frac 12}\int_0^\cdot
v (t) dt
\right )$ satisfies  the equation
 \be\label{geps 1}
  du_v^{\eps}  
  + (-\Delta)^ \alpha  u_v^{\eps}   dt
  + F(t, \cdot, u_v^{\eps} ) dt
  =  
    g(t)   dt
    +  \sigma (t,   u_v^{\eps} )    v dt
  +\sqrt{\eps} \sigma (t,   u_v^{\eps} )    {dW} ,
  \ \ t\in (0,T),
  \ee  
 with  initial condition 
 $
 u^\eps_{v} (0)=u_0\in H. $

  In addition,  
       for all
       $u_0\in\overline{B}_R(H)$,
           and  
    $v \in \cala_N$,
     the process $u_{v}^\eps$   enjoys the uniform estimates:  
  \be\label{geps 2}
 \E\left (
 \| u^\eps_v \|^2_{C([0,T], H)}
 +  \| u^\eps_v \|^2_{L^2(0,T;V )}
 + \| u^\eps_v \|^p_{L^p(0,T; L^p(\R^n))}
 \right )
 \le L_3,
 \ee
 where $L_3>0$ depends on $R,T,N$, but not on $u_0$,
 $v$ or $\eps\in (0,1)$.
 \end{lem}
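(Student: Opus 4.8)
The plan is to prove the lemma in two stages: first identify the explicitly given process $u^\eps_v=\calg^\eps_{u_0}\big(W+\eps^{-1/2}\int_0^\cdot v(s)\,ds\big)$ as a solution of the controlled stochastic equation \eqref{geps 1}, and then establish the uniform bound \eqref{geps 2} by energy estimates.

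For the identification I would use a standard Girsanov argument (cf.\ the weak convergence theory recalled in Section~2). Since $v\in\cala_N$, we have $\int_0^T\|v(\omega,s)\|^2_{l^2}\,ds\le N^2$ for a.e.\ $\omega$ (see \eqref{calan}), so the Novikov condition holds and there is a probability measure $\widetilde P\sim P$ under which $\widetilde W:=W+\eps^{-1/2}\int_0^\cdot v(s)\,ds$ is a cylindrical Wiener process in $l^2$. By the definition of the solution map coming from Lemma~\ref{le32} (pathwise uniqueness holds), $\calg^\eps_{u_0}(\widetilde W)$ is, under $\widetilde P$, the solution of \eqref{intr1}--\eqref{intr3} driven by $\widetilde W$; substituting $d\widetilde W=dW+\eps^{-1/2}v\,dt$ and noting $\sqrt\eps\,\sigma(t,u)\,d\widetilde W=\sqrt\eps\,\sigma(t,u)\,dW+\sigma(t,u)v\,dt$ shows that $\calg^\eps_{u_0}(\widetilde W)$ solves \eqref{geps 1} under $\widetilde P$, hence also under $P$ since \eqref{geps 1} is a pathwise equation and $\widetilde P\sim P$. (Uniqueness of solutions to \eqref{geps 1} in the class \eqref{defnsol_1}, if needed, follows by combining the monotonicity/Galerkin scheme of Lemma~\ref{le32} with the treatment of the extra drift $\sigma(t,u^\eps_v)v$ exactly as the control term is handled in the proof of Lemma~\ref{cosol}.)

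For the uniform estimate I would apply It\^o's formula to $\|u^\eps_v(t)\|^2$ along \eqref{geps 1}, obtaining an identity analogous to \eqref{energy1} but with the additional term $2\int_0^t(\sigma(s,u^\eps_v(s))v(s),u^\eps_v(s))\,ds$. Using the dissipativity \eqref{f1} to generate $2\lambda_1\int_0^t\|u^\eps_v(s)\|^p_{L^p(\R^n)}\,ds$ on the left, and \eqref{sig6} (with a small parameter in that inequality) to absorb both the It\^o correction $\eps\int_0^t\|\sigma(s,u^\eps_v(s))\|^2_{\call_2(l^2,H)}\,ds$ and the part $\int_0^t\|\sigma(s,u^\eps_v(s))\|^2_{\call_2(l^2,H)}\,ds$ arising from the control term (after a Cauchy--Schwarz/Young splitting, the residual piece being $\int_0^t\|v(s)\|^2_{l^2}\|u^\eps_v(s)\|^2\,ds$), the estimate reduces to a Gronwall argument. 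The martingale term is controlled by the Burkholder--Davis--Gundy inequality together with \eqref{sig6}, precisely as in \eqref{le32 p4} (using $\sqrt\eps\le 1$). Since $\int_0^T\|v(s)\|^2_{l^2}\,ds\le N^2$ and $\|u_0\|\le R$, Gronwall's inequality produces a bound with prefactor of the form $e^{c(T+N^2)}$; tracking constants shows they depend only on $R,T,N$ and on the fixed data $\psi_1,\sigma_1,\kappa,\{\beta_k\},\{\gamma_k\}$, not on $u_0$, $v$, or $\eps\in(0,1)$, which is \eqref{geps 2}.

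The main obstacle is, as elsewhere in the paper, the superlinear growth of $\sigma$, which renders both the noise correction and the control contribution a priori uncontrollable; the remedy is to spend the coercivity $\lambda_1\|u\|^p_{L^p(\R^n)}$ furnished by the dissipativity \eqref{f1} of $F$ to dominate the $\|\sigma(t,u^\eps_v)\|^2_{\call_2(l^2,H)}$ terms through \eqref{sig6}. A secondary point demanding care is uniformity: one must verify that the constants generated by Young's inequality, by BDG, and by the Gronwall factor $\exp\big(\int_0^T(1+\|v(s)\|^2_{l^2})\,ds\big)$ are all controlled purely in terms of $R$, $T$ and $N$, which is exactly where the membership $v\in\overline{B}_N(L^2(0,T;l^2))$ enters decisively.
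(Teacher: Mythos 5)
Your proposal is correct and follows essentially the same route as the paper's proof: Girsanov's theorem to identify $u^\eps_v$ as the solution of the controlled equation \eqref{geps 1}, then It\^o's formula combined with the dissipativity \eqref{f1} and the absorption estimate \eqref{sig6}, a pathwise Gronwall argument exploiting $\int_0^T\|v(s)\|^2_{l^2}\,ds\le N^2$ almost surely, and the Burkholder inequality (with $\sqrt{\eps}\le 1$) for the martingale term. No substantive differences.
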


 \begin{proof}
  It follows from
  Girsanov\rq{}s theorem
  that  
$ 
W +\eps^{-\frac 12}\int_0^\cdot
v (t) dt$
is also a 
cylindrical Wiener
process in $l^2$,  and hence 
   $u_{v}^\eps
 =\calg^\eps_{u_0}
 (W +\eps^{-\frac 12}\int_0^\cdot
v (t) dt )
  $
  is the unique solution of
\eqref{geps 1} with initial
value
$
 u^\eps_{v} (0)=u_0$.

By  \eqref{f1}, \eqref{geps} and Ito\rq{}s  formula we  get
for $t\in [0,T]$,
$$
\| {u_v}^\eps(t) \|^2
+2\int_0^t
\| (-\Delta)^{\frac {\alpha}2}  u_{v}^\eps
(s) \|^2 ds
+
  2  \lambda_1 \int_0^t 
  \|  u_{v}^\eps (s)\|^p_{L^p(\R^n)}ds
  $$
 $$
\le
 \| u_0 \|^2  + 2\|\psi_1 \|_{L^1(0,T; L^1(\R^n))}
 +2\int_0^t ( u_{v}^\eps(s), g(s)) ds
 +2\int_0^t
 (u_{v}^\eps (s), \sigma (s, u_{v}^\eps (s)) v(s)) ds
 $$
\be\label{geps p1}
 +\eps\int_0^t
 \| \sigma (s, u_{v}^\eps (s))\|^2_{\call_2(l^2, H)} ds
 + 2\sqrt{\eps}
 \int_0^t (u_{v}^\eps (s),
 \sigma (s, u_{v}^\eps (s)) dW).
 \ee
 By \eqref{sig6}, we have,
 for all $t\in [0,T]$ and
 $\eps\in (0,1)$, 
 $$
 2 \int_0^t | ( u_{v}^\eps (s),   \sigma (s,  u_{v}^\eps (s)) v(s) )|
 ds
 + \eps\int_0^t
 \| \sigma (s, u_{v}^\eps (s))\|^2_{\call_2(l^2, H)} ds
 $$
 $$
 \le
 \int_0^t
 \|v(s)\|^2_{l^2}
 \| u_{v}^\eps (s)\|^2 ds
 +  
 2 \int_0^t
 \| \sigma (s, u_{v}^\eps (s))\|^2_{\call_2(l^2, H)} ds
 $$
  \be\label{geps p2}
 \le
 \int_0^t
 \|v(s)\|^2_{l^2}
 \| u_{v}^\eps (s)\|^2 ds
 +  
  {\frac 12} \lambda_1 
 \int_0^t \|  u_{v}^\eps
 (s) 
 \|^p_{L^p(\R^n)} ds
 +4 \|\sigma_1 \|^2
 _{L^2(0,T;L^2(\R^n,l^2))}
 + c_1 T,
\ee
where $c_1>0$ depends on
$\lambda_1$, $p,q, \kappa$,
$\sum_{k=1}^\infty\beta_k$
and
$\sum_{k=1}^\infty\gamma_k$.
  By \eqref{geps p1}-\eqref{geps p2} we get
   for $\eps\in (0,1)$ and $t\in [0, T]$,
    $$
\| u_{v}^\eps(t) \|^2
+2\int_0^t
\| (-\Delta)^{\frac {\alpha}2}  u_{v}^\eps
(s) \|^2 ds
+
{\frac 32}
\lambda_1
\int_0^t \| u_{v}^\eps (s) \|^p_{L^p(\R^n)} ds
 $$
\be\label{geps p3}
 \le
 \| u_0 \|^2
 +  \int_0^t
 (1+ \| v(s)\|^2_{l^2}) \| u_{v}^\eps (s) \|^2ds 
  + 2 \sqrt{\eps}
  \int_0^t (u_{v}^\eps (s),
 \sigma (s, u_{v}^\eps (s)) dW)
    +c_2
 \ee
 where $c_2=2\|\psi_1\|_{L^1
 (0,T; L^1(\R^n))}
+ \|g\|^2_{L^2(0,T; H)}
+ 4\|\sigma_1\|^2
_{L^2
(0,T; L^2(\R^n,l^2))}
+ c_1T$.
Then for all 
   $u_0\in H$ with $\| u_0\|\le R$,
   we obtain
   from
   \eqref{geps p3}
   that  
   for $\eps\in (0,1)$ and $t\in [0, T]$,
   $$
   \| u_{v}^\eps(t) \|^2
   +2\int_0^t
   \| (-\Delta)^{\frac {\alpha}2}  u_{v}^\eps
   (s) \|^2 ds
   +
   {\frac 32}
   \lambda_1
   \int_0^t \| u_{v}^\eps (s) \|^p_{L^p(\R^n)} ds
   $$
   \be\label{geps p4}
   \le 
      \int_0^t
   (1+ \| v(s)\|^2_{l^2}) \| u_{v}^\eps (s) \|^2ds 
     +c_2 +R^2
     + M^\eps (t),
   \ee
 where 
 $ 
  M^\eps (t) = 2 \sqrt{\eps} 
 \int_0^t (u_{v}^\eps (s),
 \sigma (s, u_{v}^\eps (s)) dW)$.
  By  \eqref{geps p4}
  and Gronwall\rq{}s inequality, we see that   
  for all 
  $\eps\in (0,1)$,
  $t\in [0,T]$ and $v\in \cala_N$, 
  $$
 \| u_{v}^\eps(t) \|^2
+2\int_0^t
\| (-\Delta)^{\frac {\alpha}2}  u_{v}^\eps
(s) \|^2 ds
+
{\frac 32}
\lambda_1
\int_0^t \| u_{v}^\eps (s) \|^p_{L^p(\R^n)} ds
 $$
$$
 \le (c_2 +R^2)
 e^{
 \int_0^t
 (1+ \| v(r)\|^2_{l^2}) dr
 }  
 + 
 \int_0^t
 e^{ 
 \int_s^t
 (1+ \| v(r)\|^2_{l^2}) dr
 }
 dM^\eps (s)   
$$
$$
 \le ( c_2  +R^2)
 e^{ T+N^2} 
 + 
 M^\eps (t)
 +  \int_0^t
  (1+ \| v(s)\|^2_{l^2})
 e^{  
 \int_s^t
 (1+ \| v(r)\|^2_{l^2}) dr
 }
 M^\eps (s) ds  
$$
$$
 \le 
 ( c_2  +R^2)
 e^{ T+N^2}  
 + 
 M^\eps (t)
 + 
 (\sup_{0\le s \le t}
 |M^\eps (s)|) \  e^{  
 \int_0^T
 (1+ \| v(r)\|^2_{l^2}) dr
 }
  \int_0^T
  (1+ \| v(s)\|^2_{l^2})
    ds  
$$
  \be\label{geps p5} 
\le c_3 + c_3
\left (\sup_{0\le s \le t}
 |M^\eps (s)| \right ),
 \ee
 where $c_3=c_3(R, T, N)>0$.
 It follows from
 \eqref{geps p5} that
  for all 
 $\eps\in (0,1)$,
 $t\in [0,T]$ and $v\in \cala_N$, 
 $$
 \sup_{0\le r\le t}
 \| u_{v}^\eps(r) \|^2
 +2\int_0^t
 \| (-\Delta)^{\frac {\alpha}2}  u_{v}^\eps
 (s) \|^2 ds
 +
 {\frac 32}
 \lambda_1
 \int_0^t \| u_{v}^\eps (s) \|^p_{L^p(\R^n)} ds
 $$
 $$ 
 \le 2c_3 +  2c_3
 \left (\sup_{0\le s \le t}
 |M^\eps (s)| \right ),
$$
which shows that 
  for all 
 $\eps\in (0,1)$,
 $t\in [0,T]$ and $v\in \cala_N$, 
 $$
 \E\left (  \sup_{0\le r\le t}
 \| u_{v}^\eps(r) \|^2
 \right )
 +2\int_0^t
 \E\left (
  \| (-\Delta)^{\frac {\alpha}2}  u_{v}^\eps
 (s) \|^2 \right )  ds
 +
 {\frac 32}
 \lambda_1
 \int_0^t 
 \E \left (
  \| u_{v}^\eps (s) \|^p_{L^p(\R^n)} \right ) ds
 $$
\be\label{geps p6}
 \le 2c_3 +  2c_3
 \E \left (\sup_{0\le s \le t}
 |M^\eps (s)| \right ).
 \ee

 By
 \eqref{sig6}  and 
  the Burkholder inequality, we 
  get for all
  $\eps\in (0,1)$  and
  $t\in [0,T]$,
 $$
 2c_3 \E \left (\sup_{0\le s \le t}
 |M^\eps (s)|
 \right )
  \le
  12 c_3 \E \left ( 
  \left (\int_0^t
  \| u_{v}^\eps (s)\|^2
 \| \sigma (s, u_{v}^\eps (s)  )\|^2_{\call_2(l^2, H)}
 ds 
 \right )^{\frac 12}
 \right )
 $$
 $$
 \le
 {\frac 12}
 \E \left (
  \sup_{0\le s \le t}
  \| u_{v}^\eps (s)\| ^2
 \right )
 + 72 c_3^2
   \E \left (  
  \int_0^t
   \| \sigma (s, u_{v}^\eps (s)  )\|^2_{\call_2(l^2, H)}
 ds  
 \right )
 $$
 $$
 \le
 {\frac 12}
 \E \left (
 \sup_{0\le s \le t}
 \| u_{v}^\eps (s)\| ^2
 \right )
 +{\frac 12} \lambda_1
 \int_0^t
 \E\left (
 \|u_{v}^\eps (s) \|^p_{L^p(\R^n)}
 \right ) ds
 + 144 c_3^2 \|
 \sigma_1\|^2
 _{L^2(0,T;
 	L^2(\R^n, l^2 )}
 +c_4
 $$
 for some $c_4>0$
 depending  on $\lambda_1$,
 $T$, 
 $p,  q, \kappa$,
$\sum_{k=1}^\infty \beta_k$
and
$\sum_{k=1}^\infty \gamma_k$, 
 which along with
 \eqref{geps p6} shows that
   for all 
  $\eps\in (0,1)$,
  $t\in [0,T]$, $u_0\in\overline{B} _R(H)$  and $v\in \cala_N$, 
  $$
 {\frac 12}
  \E\left (  \sup_{0\le r\le t}
  \| u_{v}^\eps(r) \|^2
  \right )
  +2\int_0^t
  \E\left (
  \| (-\Delta)^{\frac {\alpha}2}  u_{v}^\eps
  (s) \|^2 \right )  ds
  + 
  \lambda_1
  \int_0^t 
  \E \left (
  \| u_{v}^\eps (s) \|^p_{L^p(\R^n)} \right ) ds
  $$
  \be\label{geps p7}
  \le 2c_3   
  + 144 c_3^2 \|
  \sigma_1\|^2
  _{L^2(0,T;
  	L^2(\R^n, l^2 )}
  +c_4.
  \ee
 Then
 \eqref{geps 2} follows from
 \eqref{geps p7} immediately.
  \end{proof}

    The following lemma
    is concerned with
  the convergence of solutions
 of \eqref{intr1}-\eqref{intr3} as
 $\eps \to 0$.

 \begin{lem}\label{cvs}
 	 If  \eqref{f1}-\eqref{f3}
 	and \eqref{sig1}-\eqref{sig4}
 	are fulfilled,  then for every
 	   $R>0$, $N>0$    and
 	$\eta>0$,  
 $$
 \sup_{\|u_0\|
 	\le R}
 \sup_{v\in\cala_N}
 P
 \left ( 
  \|	\calg^\eps_{u_0}
 	  (
 	W +\eps^{-\frac 12}\int_0^\cdot
 	v (t) dt
   )
 	-
 	\calg^0_{u_0}
   ( \int_0^\cdot
 	v (t) dt
 	  )\|
 	  _{
 	 C([0,T], H) \bigcap L^2(0,T; V)
 	 \bigcap L^p(0,T;
 	 L^p(\R^n) )
} >\eta 
\right )
 	  $$  
 	  $$\longrightarrow  0,
 	  \quad\text{ as } \eps \to 0.
 	  $$
  \end{lem}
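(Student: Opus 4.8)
The plan is to subtract the skeleton equation from the perturbed one, apply It\^o's formula to the square of the $H$-norm of the difference, use the dissipativity of $F$ to absorb the superlinear growth of $\sigma$, thereby reducing the assertion to a Gronwall estimate whose forcing term is a stochastic quantity of order $O(\sqrt\eps)$, and finally conclude by the Burkholder inequality, the uniform bounds of Lemmas~\ref{geps} and \ref{cosol}, and Chebyshev's inequality. Concretely, set $u^\eps_v=\calg^\eps_{u_0}\big(W+\eps^{-1/2}\int_0^\cdot v\,dt\big)$ and $u_v=\calg^0_{u_0}\big(\int_0^\cdot v\,dt\big)=u_v(\cdot,u_0)$, which are well defined by Lemmas~\ref{geps} and \ref{cosol}; then $w^\eps:=u^\eps_v-u_v$ satisfies $w^\eps(0)=0$ together with, in $(V\cap L^p(\R^n))^*$,
\[
dw^\eps+(-\Delta)^\alpha w^\eps\,dt+\big(F(t,\cdot,u^\eps_v)-F(t,\cdot,u_v)\big)\,dt=\big(\sigma(t,u^\eps_v)-\sigma(t,u_v)\big)v\,dt+\sqrt\eps\,\sigma(t,u^\eps_v)\,dW .
\]
I would apply It\^o's formula to $\|w^\eps(t)\|^2$, bound the drift term $2\int_0^t\!\int_{\R^n}(F(s,x,u^\eps_v)-F(s,x,u_v))w^\eps\,dx\,ds$ from below by \eqref{f3} and \eqref{elity1}, and bound the control cross term $2\int_0^t\big((\sigma(s,u^\eps_v)-\sigma(s,u_v))v,\,w^\eps\big)\,ds$ from above by Young's inequality, \eqref{sig7} and \eqref{elity2}, exactly as in the derivation of \eqref{cosol p10}. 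The key point is that the two copies of $\lambda_2\int_0^t\!\int_{\R^n}(|u^\eps_v|^{p-2}u^\eps_v-|u_v|^{p-2}u_v)w^\eps\,dx\,ds$ thereby produced on the two sides cancel, leaving on the left a nonnegative multiple of $\int_0^t\|w^\eps\|^p_{L^p(\R^n)}\,ds$. With $a(s):=c_1+2\|\psi_4(s)\|_{L^\infty(\R^n)}+\|v(s)\|^2_{l^2}$, $Y^\eps(t):=\|w^\eps(t)\|^2+2\int_0^t\|(-\Delta)^{\alpha/2}w^\eps\|^2\,ds+c_0\int_0^t\|w^\eps\|^p_{L^p(\R^n)}\,ds$ (with $c_0=c_0(\lambda_2,p)>0$), and $\Xi^\eps(t):=2\sqrt\eps\int_0^t(w^\eps,\sigma(s,u^\eps_v)\,dW)+\eps\int_0^t\|\sigma(s,u^\eps_v)\|^2_{\call_2(l^2,H)}\,ds$, this gives
\[
Y^\eps(t)\le\int_0^t a(s)\,Y^\eps(s)\,ds+\Xi^\eps(t),\qquad t\in[0,T],
\]
where, crucially, $\int_0^T a(s)\,ds\le c_1T+2\|\psi_4\|_{L^1(0,T;L^\infty(\R^n))}+N^2=:\Lambda$ is finite and independent of $u_0$, of $v\in\cala_N$ and of $\eps\in(0,1)$.

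Since $t\mapsto\sup_{0\le s\le t}|\Xi^\eps(s)|$ is nondecreasing, Gronwall's inequality applied pathwise yields $\sup_{0\le t\le T}Y^\eps(t)\le e^{\Lambda}\sup_{0\le t\le T}|\Xi^\eps(t)|$. Because each of $\|w^\eps\|^2_{C([0,T],H)}$, $\|w^\eps\|^2_{L^2(0,T;V)}$ and $\|w^\eps\|^p_{L^p(0,T;L^p(\R^n))}$ is dominated by a fixed multiple of $\sup_{0\le t\le T}Y^\eps(t)$, the event that the norm of $w^\eps$ in $C([0,T],H)\cap L^2(0,T;V)\cap L^p(0,T;L^p(\R^n))$ exceeds $\eta$ is contained, for a suitable $\delta=\delta(\eta,T,\lambda_2,p,\psi_4,N)>0$, in $\{\sup_{0\le t\le T}|\Xi^\eps(t)|>\delta\}$; hence by Chebyshev's inequality it suffices to prove $\sup_{\|u_0\|\le R}\sup_{v\in\cala_N}\E\big(\sup_{0\le t\le T}|\Xi^\eps(t)|\big)\to0$ as $\eps\to0$.

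To see the last fact, put $A^\eps:=\int_0^T\|\sigma(s,u^\eps_v)\|^2_{\call_2(l^2,H)}\,ds$. Estimate \eqref{sig6} (applied with its free parameter equal to $1$) together with the uniform bound \eqref{geps 2} gives $\E(A^\eps)\le L_3+2\|\sigma_1\|^2_{L^2(0,T;L^2(\R^n,l^2))}+cT$ for a constant $c$ independent of $u_0,v,\eps$, so $\E(\eps A^\eps)=O(\eps)$ uniformly. For the martingale part, the Burkholder inequality, the estimate $\int_0^T\|\sigma(s,u^\eps_v)\|^2_{\call_2(l^2,H)}\|w^\eps(s)\|^2\,ds\le A^\eps\sup_{0\le s\le T}\|w^\eps(s)\|^2$ for its quadratic variation, the Cauchy--Schwarz inequality, \eqref{geps 2} (for $u^\eps_v$) and the pathwise bound \eqref{cosol 2} (for $u_v$, applied with radius $\max\{R,N\}$) together give $\E\big(\sup_{0\le t\le T}|\int_0^t(w^\eps,\sigma(s,u^\eps_v)\,dW)|\big)\le C(\E\sup_{0\le s\le T}\|w^\eps(s)\|^2)^{1/2}(\E A^\eps)^{1/2}$, a constant independent of $u_0,v,\eps$; hence $\E(2\sqrt\eps\sup_{0\le t\le T}|\int_0^t(w^\eps,\sigma(s,u^\eps_v)\,dW)|)=O(\sqrt\eps)$ uniformly. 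Adding the two contributions gives $\E\big(\sup_{0\le t\le T}|\Xi^\eps(t)|\big)=O(\sqrt\eps)$ uniformly over $\|u_0\|\le R$ and $v\in\cala_N$, which completes the argument.

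The only genuinely delicate point is the estimate of the superlinear diffusion coefficient inside the It\^o identity: it is precisely the cancellation between the dissipative monotone term coming from \eqref{f3}--\eqref{elity1} and the superlinear part of $\|\sigma(s,u^\eps_v)-\sigma(s,u_v)\|^2_{\call_2(l^2,H)}$ furnished by \eqref{sig7}--\eqref{elity2} that keeps the Gronwall coefficient $\Lambda$ finite and $\eps$-independent; everything else — Gronwall, the Burkholder inequality, and Chebyshev's inequality, fed by the uniform bounds of Lemmas~\ref{geps} and \ref{cosol} — is routine.
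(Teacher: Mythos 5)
Your proposal is correct, and its core energy estimate is the same as the paper's: subtract the skeleton equation from the controlled/perturbed one, apply It\^o's formula to $\|u^\eps_v-u_v\|^2$, and use \eqref{f3}, \eqref{elity1}, \eqref{elity2} together with \eqref{sig7} so that the monotone drift term absorbs the superlinear part of $\|\sigma(t,u^\eps_v)-\sigma(t,u_v)\|^2_{\call_2(l^2,H)}$, leaving a Gronwall coefficient $c+\|v(t)\|^2_{l^2}+2\|\psi_4(t)\|_{L^\infty}$ whose integral is bounded by a constant depending only on $T,N,\psi_4$ (this is exactly the paper's passage from \eqref{cvs p2} to \eqref{cvs p5}, where the paper's $\lambda_1$ is a typo for $\lambda_2$). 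Where you genuinely diverge is in how the stochastic remainder is controlled. The paper introduces the stopping time $\tau^\eps_K=\inf\{t:\|u^\eps_v(t)\|\ge K\}\wedge T$, applies Doob's maximal inequality to the \emph{second} moment of the stochastic integral (which forces the a.s. bound $\|u^\eps_v(s)\|\le K$ before $\tau^\eps_K$, hence the truncation), obtains convergence on $\{\tau^\eps_K=T\}$, and removes the stopping time via $P(\tau^\eps_K<T)\le c_3/K^2$ and a double limit $\eps\to0$, then $K\to\infty$. You instead run Gronwall pathwise with the nondecreasing majorant $\sup_{s\le t}|\Xi^\eps(s)|$ and estimate the \emph{first} moment of the supremum of the local martingale by Burkholder--Davis--Gundy at exponent one, followed by Cauchy--Schwarz in expectation, so that only the uniform bounds $\E(A^\eps)\le C$ (from \eqref{sig6} and \eqref{geps 2}) and $\E\sup_t\|u^\eps_v\|^2\le L_3$, $\sup_t\|u_v\|^2\le L_2$ (Lemmas \ref{geps} and \ref{cosol}) are needed; this yields $\E\big(\sup_t|\Xi^\eps(t)|\big)=O(\sqrt\eps)$ uniformly over $\|u_0\|\le R$, $v\in\cala_N$, and Markov's inequality finishes. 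Your route dispenses with the truncation and the two-parameter limit and even gives an explicit uniform rate in probability; it is the same $L^1$-BDG device the paper itself uses in Lemmas \ref{le32} and \ref{geps}, so it is fully consistent with the paper's framework. The only points to state explicitly in a polished write-up are that BDG is invoked for the continuous \emph{local} martingale $\int_0^t(w^\eps,\sigma(s,u^\eps_v)\,dW)$ with quadratic variation dominated by $A^\eps\sup_s\|w^\eps(s)\|^2$, and that the pathwise bound \eqref{cosol 2} for $u_v$ holds $P$-a.s.\ because $v(\omega)\in\overline{B}_N(L^2(0,T;l^2))$ for a.e.\ $\omega$; both are routine.
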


\begin{proof}
	Given $u_0\in H$
	with $\|u_0\| \le R$
	and $v\in \cala_N$,
 let 
 $u_{v}^\eps 
 =
 \calg^\eps_{u_0} 
\left (
W +\eps^{-\frac 12}\int_0^\cdot
v (t) dt
\right )$
and
 $u_v=
 \calg^0_{u_0}
 \left (
 \int_0^\cdot
 v(t) dt
 \right )   $. 
 Then 
  $u_{v}^\eps $ and
 $u_v$
 are the solutions of
 \eqref{geps 1} and  
  \eqref{contr1} 
  with  $u_{v}^\eps 
  (0) =u_v (0) =u_0$,
  respectively,
  and hence 
    $u_{v}^\eps -
     u_{v}$ satisfies
   $$
 d  ( u_{v}^\eps -  u_{v}  )
 +     (-\Delta)^ \alpha ( u^\eps_{v}  -  u_{v  }
 ) dt 
 +
   ( F(t,\cdot, u^\eps_{v}  ) -  
   F(t, \cdot, u_{v}  ) ) dt   
$$
  \be\label{cvs p1}
  =
\left ( 
 \sigma(t, u^\eps_{v}  )  v
 -
  \sigma(t, u_{v}  )  v
\right )  dt
 + \sqrt{\eps} \sigma(t, u^\eps_{v}  )  
   dW  ,
\ee
with  
$u_{v}^\eps(0) -  u_{v} (0)=0$.
 By  \eqref{f3},
 \eqref{cvs p1} and It\^{o}\rq{}s formula we obtain
 for all $t\in [0,T]$,
$$
 \|  u_{v}^\eps(t) -  u_{v}(t) \|^2
+2 
\int_0^t
\|  (-\Delta)^{\frac {\alpha}2}
( u^\eps_{v}(s)  -  u_{v} (s)
 ) \|^2  ds
 $$
 $$
 +2\lambda_1\int_0^t
 \int_{\R^n}
 \left (
 |u^\eps_{v}(s)| ^{p-2} u^\eps_{v}(s)
 -
  |u_{v}(s)| ^{p-2} u_{v}(s)
  \right )( u^\eps_{v}(s)  -  u_{v} (s)
 ) dx ds
  $$
 $$
\le 2 \|\psi_4\|_{L^\infty(0,T;
L^\infty (\R^n))}
\int_0^t
\| u^\eps_{v}(s)  -  u_{v} (s)
\|^2 ds
  $$
  $$ 
 +
  2 
  \int_0^t
  \left ( 
 \sigma(s, u^\eps_{v} (s) )  v(s)
 -
  \sigma(s, u_{v} (s)  )  v (s),
  \  u^\eps_{v}(s)  -  u_{v}(s)
\right )  ds
$$
   \be\label{cvs p2}
+ \eps
\int_0^t
 \|   \sigma(s, u^\eps_{v} (s)  )\|
_{\call_2(l^2, H)}^2  ds
+ 
2\sqrt{\eps} \int_0^t
\left (
u^\eps_{v}(s)  -  u_{v}(s),\
 \sigma(s, u^\eps_{v} (s)  ) 
   dW  \right ) .
\ee

  By \eqref{elity2}  
  and \eqref{sig7}
    we have
$$  
  2 
  \int_0^t
  \left |
  \left ( 
 \sigma(s, u^\eps_{v} (s) )  v(s)
 -
  \sigma(s, u_{v} (s)  )  v (s),
  \  u^\eps_{v}(s)  -  u_{v}(s)
\right ) 
\right |  ds
$$
$$  
 \le   
  \int_0^t
  \|
 \sigma(s, u^\eps_{v} (s) )  
 -
  \sigma(s, u_{v} (s)  )
  \|_{\call_2(l^2, H)}^2
  +
  \int_0^t
  \| v (s) \|_{l^2}^2 \|
  u^\eps_{v}(s)  -  u_{v}(s)
  \| ^2  ds
$$
$$  
 \le   {\frac 12}\lambda_1
  \int_0^t\int_{\R^n}
  \left (
  |u^\eps_{v} (s) |^{p-2}
  +
    |u_{v} (s) |^{p-2}
  \right )|
  u^\eps_{v}(s)  -  u_{v}(s)
  |^2  dx ds
  $$
   $$
  +
  \int_0^t
  \left (c_1+
  \| v (s) \|_{l^2}^2
  \right ) \|
  u^\eps_{v}(s)  -  u_{v}(s)
  \| ^2  ds
$$
$$  
 \le    \lambda_1
  \int_0^t\int_{\R^n}
  \left (
  |u^\eps_{v} (s) |^{p-2}u^\eps_{v} (s)
   -
     |u_{v} (s) |^{p-2}u_{v} (s)
   \right ) \left (
  u^\eps_{v}(s)  -  u_{v}(s)
 \right )  dx ds
  $$
  \be\label{cvs p3}
  +
  \int_0^t
  \left (c_1+
  \| v (s) \|_{l^2}^2
  \right ) \|
  u^\eps_{v}(s)  -  u_{v}(s)
  \| ^2  ds,
\ee
where $c_1>0$ depends only on
$\lambda_1$,
$p, q, \kappa$
and $\sum_{k=1}^\infty 
\alpha_k$.
By \eqref{cvs p2}-\eqref{cvs p3} we obtain
for all $t\in [0,T]$,
$$
 \|  u_{v}^\eps(t) -  u_{v}(t) \|^2
+2 
\int_0^t
\|  (-\Delta)^{\frac {\alpha}2}
( u^\eps_{v}(s)  -  u_{v} (s)
 ) \|^2  ds
 $$
 $$
 + \lambda_1\int_0^t
 \int_{\R^n}
 \left (
 |u^\eps_{v}(s)| ^{p-2} u^\eps_{v}(s)
 -
  |u_{v}(s)| ^{p-2} u_{v}(s)
  \right )( u^\eps_{v}(s)  -  u_{v} (s)
 ) dx ds
  $$
 $$
\le  
\int_0^t
\left (
c_2 + \| v(s)\|^2_{l^2}
\right )
\| u^\eps_{v}(s)  -  u_{v} (s)
\|^2 ds
  $$ 
   \be\label{cvs p4}
+ \eps
\int_0^t
 \|   \sigma(s, u^\eps_{v} (s)  )\|
_{\call_2(l^2, H)}^2  ds
+ 
2\sqrt{\eps} \int_0^t
\left (
u^\eps_{v}(s)  -  u_{v}(s),\
 \sigma(s, u^\eps_{v} (s)  ) 
   dW  \right ) ,
\ee
  where 
  $c_2= c_1 + 2 \|\psi_4\|_{L^\infty(0,T;
L^\infty (\R^n))}$.
By \eqref{elity1} and \eqref{cvs p4}
we have 
for all $t\in [0,T]$,
$$
 \|  u_{v}^\eps(t) -  u_{v}(t) \|^2
+2 
\int_0^t
\|  (-\Delta)^{\frac {\alpha}2}
( u^\eps_{v}(s)  -  u_{v} (s)
 ) \|^2  ds
 $$
 $$
 + 2^{1-p} \lambda_1\int_0^t
  \|  u^\eps_{v}(s)  -  u_{v} (s)
  \|^p_{L^p(\R^n)} ds
  $$
 $$
\le  
\int_0^t
\left (
c_2 + \| v(s)\|^2_{l^2}
\right )
\| u^\eps_{v}(s)  -  u_{v} (s)
\|^2 ds
  $$ 
   \be\label{cvs p5}
+ \eps
\int_0^t
 \|   \sigma(s, u^\eps_{v} (s)  )\|
_{\call_2(l^2, H)}^2  ds
+ 
2\sqrt{\eps} \int_0^t
\left (
u^\eps_{v}(s)  -  u_{v}(s),\
 \sigma(s, u^\eps_{v} (s)  ) 
   dW  \right ) .
\ee

  Given $K>0$,    denote by 
$$
 \tau^\eps_K
 =\inf \left \{
 t\ge 0: \| u^\eps_{v} (t) \| \ge K
 \right \}\wedge T.
$$
By \eqref{cvs p5} we have 
for all $t\in [0,T]$,
 $$ \sup_{0\le r \le t}
 \left (
 \|  u_{v}^\eps(r \wedge \tau^\eps_K ) 
 -  u_{v}( r \wedge \tau^\eps_K) \|^2 \right )
+2 
\int_0^{t \wedge \tau^\eps_K}
\|  (-\Delta)^{\frac {\alpha}2}
( u^\eps_{v}(s)  -  u_{v} (s)
 ) \|^2  ds
 $$
 $$
 + 2^{1-p} \lambda_1\int_0 ^{t \wedge \tau^\eps_K}
  \|  u^\eps_{v}(s)  -  u_{v} (s)
  \|^p_{L^p(\R^n)} ds
  $$
  $$
\le
 2 \int_0^{t  } (c_2 + \| v(s)\|^2_{l^2})
 \sup_{0\le r \le s}
 \|  u^\eps_{v}(r\wedge \tau^\eps_K) 
  -  u_{v}(r\wedge \tau^\eps_K) \|^2
 ds
 $$
   \be\label{cvs p6}
+ 2 \eps
\int_0^{T\wedge \tau^\eps_K}
 \|   \sigma(s, u^\eps_{v} (s)  )\|
_{\call_2(l^2, H)}^2  ds
+ 
4\sqrt{\eps}
\sup_{0\le r \le T}
 \left|
\int_0^{r \wedge \tau^\eps_K}
\left (
u^\eps_{v}(s)  -  u_{v}(s),\
 \sigma(s, u^\eps_{v} (s)  ) 
   dW  \right ) 
   \right |.
 \ee
By Gronwall\rq{}s inequality, we obtain
from \eqref{cvs p6} that
  for all $t\in [0,T]$,  
 $$
  \sup_{0\le r \le t}
 \left (
 \|  u_{v}^\eps(r \wedge \tau^\eps_K ) 
 -  u_{v}( r \wedge \tau^\eps_K) \|^2
 \right )
+2 
\int_0^{t \wedge \tau^\eps_K}
\|  (-\Delta)^{\frac {\alpha}2}
( u^\eps_{v}(s)  -  u_{v} (s)
 ) \|^2  ds 
 $$
 $$
 + 2^{1-p} \lambda_1\int_0 ^{t \wedge \tau^\eps_K}
  \|  u^\eps_{v}(s)  -  u_{v} (s)
  \|^p_{L^p(\R^n)} ds
  $$
$$
\le  2 \eps  e^{2\int_0^t (c_2 + \| v(s)\|^2_{l^2}) ds}
\int_0^{T\wedge \tau^\eps_K}
 \|   \sigma(s, u^\eps_{v} (s)  )\|
_{\call_2(l^2, H)}^2  ds
$$
  \be\label{cvs p7}
+ 
4 \sqrt{\eps}e^{2\int_0^t (c_2 + \| v(s)\|^2_{l^2}) ds}
\sup_{0\le r \le T}
 \left|
\int_0^{r \wedge \tau^\eps_K}
\left (
u^\eps_{v}(s)  -  u_{v}(s),\
 \sigma(s, u^\eps_{v} (s)  ) 
   dW  \right ) 
   \right | .
 \ee
 It follows from 
 \eqref{cvs p7} that
 for all $\|u_0\|\le R$ and $v\in \cala_N$,
 $$
 \E \left (
  \sup_{0\le r \le T}
 \left (
 \|  u_{v}^\eps(r \wedge \tau^\eps_K ) 
 -  u_{v}( r \wedge \tau^\eps_K) \|^2
 \right )
 \right )
+2  \E \left ( 
\int_0^{T\wedge \tau^\eps_K}
\|  (-\Delta)^{\frac {\alpha}2}
( u^\eps_{v}(s)  -  u_{v} (s)
 ) \|^2  ds 
 \right )
 $$
 $$
 + 2^{1-p} \lambda_1
 \E\left (
 \int_0 ^{T\wedge \tau^\eps_K}
  \|  u^\eps_{v}(s)  -  u_{v} (s)
  \|^p_{L^p(\R^n)} ds
  \right )
  $$
$$
\le  2 \eps 
\E \left (
 e^{2\int_0^T (c_2 + \| v(s)\|^2_{l^2}) ds}
\int_0^{T\wedge \tau^\eps_K}
 \|   \sigma(s, u^\eps_{v} (s)  )\|
_{\call_2(l^2, H)}^2  ds
\right )
$$
$$
+ 
4 \sqrt{\eps}
\E \left (
e^{2\int_0^T (c_2 + \| v(s)\|^2_{l^2}) ds}
\sup_{0\le r \le T}
 \left|
\int_0^{r \wedge \tau^\eps_K}
\left (
u^\eps_{v}(s)  -  u_{v}(s),\
 \sigma(s, u^\eps_{v} (s)  ) 
   dW  \right ) 
   \right |
   \right ) 
 $$ 
$$
\le  2 \eps 
 e^{2Tc_2 + 2N^2  }
\E \left (
\int_0^{T\wedge \tau^\eps_K}
 \|   \sigma(s, u^\eps_{v} (s)  )\|
_{\call_2(l^2, H)}^2  ds
\right )
$$
  \be\label{cvs p8}
+ 
4 \sqrt{\eps} e^{2Tc_2 + 2N^2  }
\E \left ( 
\sup_{0\le r \le T}
 \left|
\int_0^{r \wedge \tau^\eps_K}
\left (
u^\eps_{v}(s)  -  u_{v}(s),\
 \sigma(s, u^\eps_{v} (s)  ) 
   dW  \right ) 
   \right |
   \right ) .
 \ee

We now deal with the right-hand side of
\eqref{cvs p8}.
By Lemmas \ref{cosol} and \ref{geps} we see that
there exists $c_3 =c_3(R,N,T)>0$
independent of $u_0$, $v$ and $\eps$ such that
for all $\|u_0\|\le R$,
$v\in \cala_N$ and $\eps\in (0,1)$,
\be\label{cvs p10}
\|u_v\|^2_{C([0,T], H)}
+\|u_v\|^2_{L^2(0,T; V)}
+\|u_v\|^p
_{L^p(0,T; L^p(\R^n))}
\le c_3,
\quad
\text{ 
P-almost surely}, 
\ee and
\be\label{cvs p11}
\E \left ( \|u^\eps_v\|^2_{C([0,T], H)}
\right )
+\E \left (
\|u^\eps_v\|^2_{L^2(0,T; V)}
\right )
+\E \left ( 
\|u^\eps_v\|^p
_{L^p(0,T; L^p(\R^n))}
\right )
\le c_3.
\ee

By  \eqref{sig6} and \eqref{cvs p11} we have
for all $\|u_0\|\le R$,
$v\in \cala_N$ and $\eps\in (0,1)$,
 $$
 \E \left (
\int_0^{T\wedge \tau^\eps_K}
 \|   \sigma(s, u^\eps_{v} (s)  )\|
_{\call_2(l^2, H)}^2  ds
\right )
\le
 \E \left (
\int_0^ { T }
 \|   \sigma(s, u^\eps_{v} (s)  )\|
_{\call_2(l^2, H)}^2  ds
\right )
$$
$$
\le
\E \left (
\int_0^T
  \| u^\eps_{v} (s) \|^p_{L^p(\R^n)} 
   ds 
   \right ) 
+ 2\| \sigma_1\|^2_{L^2(0,T; ,L^2(\R^n,l^2))}
 + c_4T
 $$
 \be\label{cvs p12}
 \le   c_3
 +  2\| \sigma_1\|^2_{L^2(0,T; ,L^2(\R^n,l^2))}
 + c_4T,
\ee
where $c_4>0$ depends only on
$p, q, \kappa$,
$\sum_{k=1}^\infty \beta_k$
and
$\sum_{k=1}^\infty \gamma_k$.

  On the other hand, 
  by \eqref{cvs p10},
 \eqref{cvs p12} 
and
Doob\rq{}s maximal inequality we  obtain
that for all $\|u_0\|\le R$,
$v\in \cala_N$ and $\eps\in (0,1)$,
$$\E \left ( 
\sup_{0\le r \le T}
 \left|
\int_0^{r \wedge \tau^\eps_K}
\left (
u^\eps_{v}(s)  -  u_{v}(s),\
 \sigma(s, u^\eps_{v} (s)  ) 
   dW  \right ) 
   \right |^2
   \right )
   $$
  $$
\le 4
\E\left ( 
  \int_0^ { T\wedge \tau^\eps_K}
 \|
u^\eps_{v}(s)  -  u_{v}(s)\|^2
 \| \sigma(s, u^\eps_{v} (s)   )\|^2_
 {\call_2(l^2, H )}
 ds 
   \right )
$$
$$
\le  8
(K^2+c_3^2)
\E\left ( 
  \int_0^ { T\wedge \tau^\eps_R}
 \| \sigma(s, u^\eps_{v^\eps} (s)   )\|^2_{\call_2(
 l^2,H )}
 ds 
   \right )
$$
  \be\label{cvs p13}
\le 
 8
(K^2+c_3^2)
\left (
 c_3
 +  2\| \sigma_1\|^2_{L^2(0,T; ,L^2(\R^n,l^2))}
 + c_4T 
\right ).
\ee
  
By \eqref{cvs p8}
and \eqref{cvs p12}-\eqref{cvs p13}we
obtain that 
 for all  $\eps \in (0,1) $, 
 $\|u_0\|\le R$ and $v\in \cala_N$,
 $$
 \E \left (
  \sup_{0\le t \le T}
 \left (
 \|  u_{v}^\eps(t \wedge \tau^\eps_K ) 
 -  u_{v}( t \wedge \tau^\eps_K) \|^2
 \right )
 \right )
+2  \E \left ( 
\int_0^{T\wedge \tau^\eps_K}
\|  (-\Delta)^{\frac {\alpha}2}
( u^\eps_{v}(s)  -  u_{v} (s)
 ) \|^2  ds 
 \right )
 $$
 $$
 + 2^{1-p} \lambda_1
 \E\left (
 \int_0 ^{T\wedge \tau^\eps_K}
  \|  u^\eps_{v}(s)  -  u_{v} (s)
  \|^p_{L^p(\R^n)} ds
  \right )
  $$
  $$
\le  2 \eps c_5
 e^{2Tc_2 + 2N^2  }
 + 32 \sqrt{\eps} e^{2Tc_2 + 2N^2  }
  (K^2+c_3^2)c_5,
  $$
  where 
  $c_5=
  c_3
 +  2\| \sigma_1\|^2_{L^2(0,T; ,L^2(\R^n,l^2))}
 + c_4T$, which shows that
\be\label{cvs p20}
 \lim_{\eps \to 0}
 \sup_{\|u_0\|\le R}
 \sup_{v\in \cala_N}
 \E \left (
  \sup_{0\le t \le T}
 \left (
 \|  u_{v}^\eps(t \wedge \tau^\eps_K ) 
 -  u_{v}( t \wedge \tau^\eps_K) \|^2
 \right )
 \right )=0,
 \ee
 \be\label{cvs p21}
 \lim_{\eps \to 0}
 \sup_{\|u_0\|\le R}
 \sup_{v\in \cala_N}
   \E \left ( 
\int_0^{T\wedge \tau^\eps_K}
\|  (-\Delta)^{\frac {\alpha}2}
( u^\eps_{v}(s)  -  u_{v} (s)
 ) \|^2  ds 
 \right ),
 \ee
 and
 \be\label{cvs p22}
 \lim_{\eps \to 0}
 \sup_{\|u_0\|\le R}
 \sup_{v\in \cala_N} 
 \E\left (
 \int_0 ^{T\wedge \tau^\eps_K}
  \|  u^\eps_{v}(s)  -  u_{v} (s)
  \|^p_{L^p(\R^n)} ds
  \right )
  =0.
  \ee

By  \eqref{cvs p11}   we have, 
 for all  $\eps \in (0,1) $, 
 $\|u_0\|\le R$ and $v\in \cala_N$,
   \be\label{cvs p23}
 P(\tau^\eps_K <T)
\le 
 P \left (
 \sup_{t\in [0,T]}
 \| u^\eps_{v } (t) \|  \ge K  
 \right )
 \le
 {\frac 1{K^2}}
 \E \left ( 
 \| u^\eps_{v }   \|^2_{C([0,T], H)} 
 \right )
  \le
 {\frac {c_3}{K^2}}.
\ee 
By 
 \eqref{cvs p23} we see that
  for 
  every  $\eta>0$,
 $$
 \sup_{\|u_0\|\le R}
 \sup_{v\in \cala_N} 
 P 
\left (
\sup_{0\le t \le T}
 \|  u_{v}^\eps(t )
  -  u_{v}( t ) \| >\eta
\right )
$$
$$
 \le \sup_{\|u_0\|\le R}
 \sup_{v\in \cala_N} 
 P 
\left (
\sup_{0\le t \le T}
 \|  u_{v}^\eps(t )
  -  u_{v}( t ) \| >\eta,
  \  \tau^\eps_K =T
\right )
$$
$$
+
\sup_{\|u_0\|\le R}
 \sup_{v\in \cala_N} 
  P 
\left (
\sup_{0\le t \le T}
 \|  u_{v}^\eps(t )
  -  u_{v}( t ) \| >\eta,
  \  \tau^\eps_K <T
\right )
$$
$$
 \le \sup_{\|u_0\|\le R}
 \sup_{v\in \cala_N} 
 P 
\left (
\sup_{0\le t \le T}
 \|  u_{v}^\eps(t \wedge  \tau^\eps_K  )
  -  u_{v}( t \wedge  \tau^\eps_K) \| >\eta  
\right )
+
   {\frac {c_3}{K^2}}.
$$
$$
 \le \sup_{\|u_0\|\le R}
 \sup_{v\in \cala_N} 
 \eta^{-2}
\E
\left (
\sup_{0\le t \le T}
 \|  u_{v}^\eps(t \wedge  \tau^\eps_K  )
  -  u_{v}( t \wedge  \tau^\eps_K) \|^2 
\right )
+
   {\frac {c_3}{K^2}}.
$$

First  taking the limit as $\eps \to 0$  and then
as 
 $K\to \infty$,  by \eqref{cvs p20} we obtain
 for 
  every  $\eta>0$,
  \be \label{cvs p24}
\lim_{\eps \to 0}
 \sup_{\|u_0\|\le R}
 \sup_{v\in \cala_N} 
 P 
\left (
\sup_{0\le t \le T}
 \|  u_{v}^\eps(t )
  -  u_{v}( t ) \| >\eta
\right )
 =0.
 \ee

By   \eqref{cvs p23} we have,
for every $\eta>0$,
 $$
 \sup_{\|u_0\|\le R}
 \sup_{v\in \cala_N} 
 P 
\left (
 \int_0^T
 \|  u_{v}^\eps(t )
  -  u_{v}( t ) \|^p_{L^p(\R^n)} dt >\eta^p
\right )
$$
$$
\sup_{\|u_0\|\le R}
 \sup_{v\in \cala_N} 
 \le P 
\left (
 \int_0^T
 \|  u_{v}^\eps(t )
  -  u_{v}( t ) \| ^p_{L^p(\R^n)}  dt >\eta^p,
  \  \tau^\eps_K =T
\right )
$$
$$
+
\sup_{\|u_0\|\le R}
 \sup_{v\in \cala_N} 
  P 
\left (
 \int_0^T
 \|  u_{v}^\eps(t )
  -  u_{v}( t ) \| ^p_{L^p(\R^n)} dt >\eta^p,
  \  \tau^\eps_K <T
\right )
$$
$$
 \le 
 \sup_{\|u_0\|\le R}
 \sup_{v\in \cala_N} 
 P 
\left (
 \int_0^{T \wedge \tau_K^\eps}
 \|  u_{v}^\eps(t )
  -  u_{v}( t ) \| ^p_{L^p(\R^n)}  dt >\eta^p 
\right )
+
   {\frac {c_3}{K^2}}
$$
$$
 \le 
 \sup_{\|u_0\|\le R}
 \sup_{v\in \cala_N} 
 \eta^{-p}
\E
\left (
 \int_0^{T \wedge \tau_K^\eps}
 \|  u_{v}^\eps(t )
  -  u_{v}( t ) \| ^p_{L^p(\R^n)}  dt  
\right )
+
   {\frac {c_3}{K^2}},
$$
which together with
\eqref{cvs p22} implies that
\be\label{cvs p25}
 \lim_{\eps \to 0}\sup_{\|u_0\|\le R}
 \sup_{v\in \cala_N} 
 P 
\left (
 \int_0^T
 \|  u_{v}^\eps(t )
  -  u_{v}( t ) \|^p_{L^p(\R^n)} dt >\eta^p
\right )=0.
\ee
 Similarly, by \eqref{cvs p21} and \eqref{cvs p23} one can obtain
\be\label{cvs p26}
 \lim_{\eps \to 0}\sup_{\|u_0\|\le R}
 \sup_{v\in \cala_N} 
 P 
\left (
 \int_0^T
 \| (-\Delta)^{\frac \alpha{2}} (u_{v}^\eps(t )
  -  u_{v}( t )) \|^2  dt >\eta^2
\right ) =0.
\ee
Then \eqref{cvs p24}-\eqref{cvs p26} 
conclude the proof.
 \end{proof}

We are now ready to prove
Theorem \ref{main}.

{\bf Proof of Theorem \ref{main}}.
 By \eqref{comg0} and  Lemma \ref{cvs}
 we find that \eqref{convp}
 and \eqref{convp1} are satisfied, and thus
 Theorem \ref{main}
 follows from 
  Proposition \ref{fwuldp} 
  immediately.

Next, we discuss the
 Dembo-Zeitouni uniform
 LDP of solutions
 over compact initial data.
 To that end, we need the
 continuity of   level sets
 of the rate function
 as stated below.

 \begin{lem}
 	\label{levc}
 Let  \eqref{f1}-\eqref{f3}
 and \eqref{sig1}-\eqref{sig4}
 be fulfilled and $s\ge 0$.
 If   $u_{0,n} \to u_0$ in $H$, then 
   	$\{I_{ u_{0,n}}^s\}_{n=1}^\infty $ converges
 	to $I_{ u_0}^s$ in the Hausdorff
 	metric 
 	in $C([0,T], H)
 	\bigcap L^2(0,T; V)
 	\bigcap L^p(0,T; L^p(\R^n))$:
 \be\label{levc 1}
 	\sup_{\phi \in I^s_{u_0}}
 	\left (
 	{\rm dist}_{
 	C([0,T], H)
 	\bigcap L^2(0,T; V)
 	\bigcap L^p(0,T; L^p(\R^n))
 }\ 
 	(\phi ,\  I^s_{ u_{0,n}}) 
 	\right )
   \longrightarrow  0, \ \ \text{as } \ n\to \infty,
 \ee
 	and
 \be\label{levc 2}
 	\sup_{\phi \in I^s_{u_{0,n}}}
 	\left (
 	{\rm dist}_{
 		C([0,T], H)
 		\bigcap L^2(0,T; V)
 		\bigcap L^p(0,T; L^p(\R^n))
 	}\ 
 	\left (\phi , \ I^s_{ u_{0}}
 	\right )
 	\right )
   	\longrightarrow  0, \ \ \text{as } \ n\to \infty.
  \ee 
 \end{lem}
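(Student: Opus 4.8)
The plan is to reduce everything to two continuity facts already available: the Lipschitz estimate of Lemma~\ref{cosol}, which bounds the solution of the control equation in $\mathcal{X}:=C([0,T],H)\bigcap L^2(0,T;V)\bigcap L^p(0,T;L^p(\R^n))$ in terms of the initial datum and the control, and Lemma~\ref{wc}, which turns weak convergence of controls into strong convergence of the corresponding solutions in $\mathcal{X}$. I will also use that $I_{u_0}$ is a good rate function whose infimum in \eqref{rate} is attained whenever it is finite, and that each level set $I^s_{u_0}$ is compact, so that ${\rm dist}_{\mathcal{X}}(\cdot,I^s_{u_0})$ attains its supremum over any compact set (if a level set is empty the corresponding claim is trivial). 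Since $u_{0,n}\to u_0$ in $H$, I may fix $R>0$ with $\|u_0\|\le R$, $\sup_n\|u_{0,n}\|\le R$ and $\sqrt{2s}\le R$, so that the constant $L_2=L_2(R,T)$ of Lemma~\ref{cosol} can be used uniformly in all estimates below.

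For \eqref{levc 1}, I would fix $\phi\in I^s_{u_0}$ and pick a minimizer $v\in L^2(0,T;l^2)$ in \eqref{rate}, so that $u_v(\cdot,u_0)=\phi$ and $\tfrac12\int_0^T\|v(t)\|_{l^2}^2\,dt=I_{u_0}(\phi)\le s$; in particular $v\in\overline{B}_R(L^2(0,T;l^2))$. Setting $\psi_n:=u_v(\cdot,u_{0,n})$, one has $I_{u_{0,n}}(\psi_n)\le\tfrac12\int_0^T\|v(t)\|_{l^2}^2\,dt\le s$, hence $\psi_n\in I^s_{u_{0,n}}$, and Lemma~\ref{cosol} (with $v_1=v_2=v$, $u_{0,1}=u_0$, $u_{0,2}=u_{0,n}$) gives
\[
{\rm dist}_{\mathcal{X}}(\phi,I^s_{u_{0,n}})\le\|\phi-\psi_n\|_{\mathcal{X}}\le C\,\|u_0-u_{0,n}\|,
\]
with $C$ depending only on $L_2$, hence independent of $\phi$ and $n$. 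Taking the supremum over $\phi\in I^s_{u_0}$ and then letting $n\to\infty$ yields \eqref{levc 1}.

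For \eqref{levc 2} I would argue by contradiction. If it fails, there are $\delta>0$ and, along a subsequence, points $\phi_n\in I^s_{u_{0,n}}$ with ${\rm dist}_{\mathcal{X}}(\phi_n,I^s_{u_0})\ge\delta$. Choose minimizers $v_n$ with $u_{v_n}(\cdot,u_{0,n})=\phi_n$ and $\tfrac12\int_0^T\|v_n(t)\|_{l^2}^2\,dt\le s$; since $\{v_n\}$ is bounded in $L^2(0,T;l^2)$, after passing to a further subsequence $v_n\rightharpoonup v$ weakly in $L^2(0,T;l^2)$, and weak lower semicontinuity of the norm gives $\tfrac12\int_0^T\|v(t)\|_{l^2}^2\,dt\le s$. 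Now I split, using the triangle inequality in $\mathcal{X}$,
\[
\|\phi_n-u_v(\cdot,u_0)\|_{\mathcal{X}}\le\|u_{v_n}(\cdot,u_{0,n})-u_{v_n}(\cdot,u_0)\|_{\mathcal{X}}+\|u_{v_n}(\cdot,u_0)-u_v(\cdot,u_0)\|_{\mathcal{X}}.
\]
The first term tends to $0$ by Lemma~\ref{cosol} with the uniform constant $L_2$ (here the control $v_n$ is held fixed between the two solutions and only the datum changes), and the second tends to $0$ by Lemma~\ref{wc}, since $v_n\rightharpoonup v$ and the initial datum is the fixed $u_0$. Hence $\phi_n\to\phi:=u_v(\cdot,u_0)$ in $\mathcal{X}$, and since $I_{u_0}(\phi)\le\tfrac12\int_0^T\|v(t)\|_{l^2}^2\,dt\le s$ we get $\phi\in I^s_{u_0}$, so ${\rm dist}_{\mathcal{X}}(\phi_n,I^s_{u_0})\le\|\phi_n-\phi\|_{\mathcal{X}}\to0$, contradicting ${\rm dist}_{\mathcal{X}}(\phi_n,I^s_{u_0})\ge\delta$.

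The main obstacle is the second inclusion \eqref{levc 2}, where the control is perturbed only weakly while the datum is perturbed strongly: the above decomposition works only because the Lipschitz constant $L_2$ of Lemma~\ref{cosol} is uniform over bounded balls, which in turn rested on the dissipativity of $F$ and on the uniform tail-ends estimates of Lemma~\ref{tail} needed to circumvent the non-compactness of the Sobolev embeddings on $\R^n$; the weak-to-strong continuity of Lemma~\ref{wc} is the other genuinely nontrivial ingredient. Everything else is a routine combination of these with the attainment of the infimum in \eqref{rate} and weak lower semicontinuity of the action.
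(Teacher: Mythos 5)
Your proof is correct. For \eqref{levc 1} you argue exactly as the paper does: represent $\phi\in I^s_{u_0}$ as $u_v(\cdot,u_0)$ with $\tfrac12\int_0^T\|v\|^2_{l^2}dt\le s$, note $u_v(\cdot,u_{0,n})\in I^s_{u_{0,n}}$, and invoke the Lipschitz estimate \eqref{cosol 1} with the constant $L_2(R,T)$ uniform over the relevant ball. For \eqref{levc 2} the paper simply states that ``a similar argument'' works, and indeed the intended route is the mirror image of the first half: for $\phi\in I^s_{u_{0,n}}$ write $\phi=u_v(\cdot,u_{0,n})$ with action at most $s$, observe $u_v(\cdot,u_0)\in I^s_{u_0}$, and use the same uniform estimate, so no compactness is needed. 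You instead prove \eqref{levc 2} by contradiction, extracting a weakly convergent subsequence of near-minimizing controls and combining the uniform Lipschitz bound of Lemma~\ref{cosol} (for the change of initial datum) with the weak-to-strong continuity of Lemma~\ref{wc} (for the change of control). This is heavier machinery than the paper needs for this step, but it is sound, and it has the mild advantage of not requiring you to notice that the bound in Lemma~\ref{cosol} is uniform over the whole family of admissible controls simultaneously with the data (though in fact you still use that uniformity for the first term of your decomposition). Two small imprecisions worth fixing: the bound $\|\phi-\psi_n\|_{\mathcal X}\le C\|u_0-u_{0,n}\|$ is not literally what \eqref{cosol 1} gives for the $L^p(0,T;L^p(\R^n))$ component, which comes out as $L_2^{1/p}\|u_0-u_{0,n}\|^{2/p}$; the correct statement still tends to zero, so the conclusion is unaffected. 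Also, your appeal to exact minimizers in \eqref{rate} is legitimate (the paper notes the infimum is attained when finite), but it is not needed: controls with action at most $s+1/n$ would serve equally well in your compactness argument.
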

 
 \begin{proof}
 	Let   $u_v (\cdot, u_0)$  
 	be 
 	the solution of 
 	\eqref{contr1}--\eqref{contr2}.
 	If  $\phi \in I^s_{u_0}$,
 	then $\phi = u_v (\cdot, u_0)$
 for some  $v\in L^2(0,T; H)$
  with
 	${\frac 12}\int_0^T
 	\| v(s) \|^2 ds \le s$.  
 By Lemma \ref{cosol}
 	we  have
 	$$
 	{\rm dist}_{
 	C([0,T], H)
 	\bigcap L^2(0,T; V)
 	\bigcap L^p(0,T; L^p(\R^n))
 }\
 	(\phi , I^s_{ u_{0,n}})
 	$$
 	$$
 	\le 
 	{\rm dist}_{
 	C([0,T], H)
 	\bigcap L^2(0,T; V)
 	\bigcap L^p(0,T; L^p(\R^n))
 }
 \ 	  
 	( u_v(\cdot, u_{0} )  , \
 	u_v(\cdot, u_{0,n} ))
 	$$
 	$$
 	\le c_1
 	\left ( \| u_{0,n} -u_0\|
 	+\| u_{0,n} -u_0\|^{\frac p2}
 	\right ),
 	$$
 	where $c_1=c_1(T)>0$.
 	Since $\phi \in I^s_{u_0}$ 
 	is arbitrary, we get  
 	$$
 	\sup_{\phi \in I^s_{u_0}}
 	{\rm dist}_{
 	 C([0,T], H)
 	 \bigcap L^2(0,T; V)
 	 \bigcap L^p(0,T; L^p(\R^n))
 }\
 	(\phi,  I^s_{ u_{0,n}})
 \le c_1
 \left ( \| u_{0,n} -u_0\|
 +\| u_{0,n} -u_0\|^{\frac p2}
 \right ),
  $$
 	from which
 	 \eqref{levc 1}
 	follows due to the assumption
 	that  $u_{0,n}
 	\to u_0$ in $H$.
 	
 	The convergence \eqref{levc 2}
 	can be proved by a similar
 	argument, and the details 
 	are omitted.
  \end{proof}

 We now present the 
   Dembo-Zeitouni uniform LDP
 of solutions of \eqref{intr1}-\eqref{intr3}
 over compact initial data.

 \begin{thm}\label{main1}
 	If    \eqref{f1}-\eqref{f3}
 	and \eqref{sig1}-\eqref{sig4}
 	are fulfilled, then the
 	solutions 
 	$u^\eps(\cdot, u_0) $,
 	${0<\eps\le 1}$,
 	 of   
 	   \eqref{intr1}-\eqref{intr3}
 	satisfies  
 	the   Dembo-Zeitouni
 	uniform LDP
 	on $ 	C([0,T], H)
 	\bigcap L^2(0,T; V)
 	\bigcap L^p(0,T; L^p(\R^n))$
 	with rate function $I_{u_0}$, 
 	uniformly with respect to 
 	$u_0$
 over   compact subsets of $H$:
 	
 	\begin{enumerate} 
 		\item  For 
 		every open subset $G$ of $ 	C([0,T], H)
 		\bigcap L^2(0,T; V)
 		\bigcap L^p(0,T; L^p(\R^n))$ and
 		 every 
 		compact subset  $\calz $ of $H$,
 		$$
 		\liminf_{\eps \to 0}
 		\inf _{u_0 \in \calz}
 		\left (
 		\eps \ln 
 		P(u^\eps (\cdot,  u_0)
 		\in G )
 		\right )
 		\ge -\sup_{u_0 \in \calz}
 		\inf_{\phi \in G} I_{ u_0} (\phi ).
 		$$
 		
 		\item  For  
 		every closed subset $F$ of $ 	C([0,T], H)
 		\bigcap L^2(0,T; V)
 		\bigcap L^p(0,T; L^p(\R^n))$
 		and every
 		compact subset  $\calz $ of $H$,
 		$$
 		\limsup_{\eps \to 0}
 		\sup_{u_0 \in \calz}
 		\left (
 		\eps \ln 
 		P(u^\eps (\cdot,  u_0)
 		\in F )
 		\right )
 		\le  -\inf_{u_0 \in \calz}
 		\inf_{\phi \in F} I_{ u_0} (\phi ).
 		$$
 	\end{enumerate}
 \end{thm}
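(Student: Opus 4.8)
The plan is to obtain Theorem \ref{main1} as an immediate consequence of Theorem \ref{main}, Lemma \ref{levc} and the equivalence criterion recorded in Proposition \ref{eqldp}, all of which are already available. First I would fix an arbitrary compact subset $\calz$ of $H$ and regard $(\calz,\|\cdot\|_H)$ as the compact Polish space $\cale_0$, with the target Polish space being $\cale=C([0,T],H)\bigcap L^2(0,T;V)\bigcap L^p(0,T;L^p(\R^n))$ and the rate functions $I_{u_0}$ given by \eqref{rate}, which were noted to be good rate functions after \eqref{comg0}.

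Next I would check that the family $\{u^\eps(\cdot,u_0)\}_{0<\eps\le 1}$ satisfies the Freidlin-Wentzell uniform LDP on $\cale$ with rate functions $I_{u_0}$ uniformly over $u_0\in\calz$. This follows directly from Theorem \ref{main}: $\calz$ is compact, hence in particular a bounded subset of $H$, and both conditions in the definition of the Freidlin-Wentzell uniform LDP only improve when the index set $\cale_0$ is shrunk, since $\inf_{x\in\calz}\ge\inf_{x\in\overline{B}_R(H)}$ in part (i) and $\sup_{x\in\calz}\le\sup_{x\in\overline{B}_R(H)}$ in part (ii) for any ball $\overline{B}_R(H)\supseteq\calz$. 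Thus the Freidlin-Wentzell uniform LDP over bounded initial data provided by Theorem \ref{main} restricts to the Freidlin-Wentzell uniform LDP over $\calz$.

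Then I would verify the hypotheses of Proposition \ref{eqldp} with $\cale_0=\calz$. Compactness of $\calz$ is immediate, and the required Hausdorff continuity of level sets, namely that for each $s\ge 0$ one has $\max\{\sup_{\phi\in I^s_{u_0}}{\rm dist}(\phi,I^s_{u_{0,n}}),\ \sup_{\phi\in I^s_{u_{0,n}}}{\rm dist}(\phi,I^s_{u_0})\}\to 0$ whenever $u_{0,n}\to u_0$ in $H$ (in particular along sequences lying in $\calz$, since $\rho_0$ is the metric of $H$ restricted to $\calz$), is exactly the conjunction of \eqref{levc 1} and \eqref{levc 2} in Lemma \ref{levc}. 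Invoking Proposition \ref{eqldp} with $\cale_0=\calz$ then shows that the Freidlin-Wentzell uniform LDP over $\calz$ is equivalent to the Dembo-Zeitouni uniform LDP over $\calz$, so the latter holds; written out explicitly, this is precisely statements (1) and (2) of the theorem. Since $\calz$ was an arbitrary compact subset of $H$, the proof is complete.

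I do not expect a genuine obstacle in this final argument: all of the analytic work — the uniform estimates and tail-ends estimates of the controlled equation, the strong convergence in the weak topology of controls, the convergence $\calg^\eps_{u_0}\to\calg^0_{u_0}$ as $\eps\to 0$, and the Hausdorff continuity of level sets — has already been carried out in Lemmas \ref{cosol}--\ref{cvs}, Lemma \ref{levc} and Theorem \ref{main}. The only point requiring (minor) care is the monotonicity remark that a uniform LDP over a set automatically restricts to a uniform LDP over any subset, which is what allows the passage from ``bounded initial data'' in Theorem \ref{main} to the compact set $\calz$ before Proposition \ref{eqldp} can be applied.
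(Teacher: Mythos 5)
Your proposal is correct and follows exactly the paper's route: the paper also deduces Theorem \ref{main1} directly from Theorem \ref{main}, Lemma \ref{levc} and Proposition \ref{eqldp}, and your elaboration (restricting the Freidlin-Wentzell uniform LDP from bounded to compact initial data before invoking the equivalence) merely makes explicit what the paper leaves as "immediate."
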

 
 \begin{proof}
 This is an immediate consequence
 of    Proposition
 \ref{eqldp}, 
 Theorem \ref{main} and 
 Lemma \ref{levc}.
 \end{proof}

 \section*{Competing interests and declarations}
 I declare that the author has
 no competing interests  
 or other interests that might be
 perceived to influence the results and/or 
 discussion reported in this paper.
 
  \section*{ 
 Data availability statements
}
  This manuscript
 has no associated data.
%
%
%


\begin{thebibliography}{99}

 
      

\bibitem{abe1}
S. Abe and S. Thurner,
Anomalous diffusion  in view of Einsteins 1905 theory of Brownian
motion, {\em Physica A},
{\bf 356} (2005), 403-407.



    
  \bibitem{bess1}
     H.  Bessaih and A. Millet, 
     Large deviation principle and inviscid shell models, 
     {\em Electron.
J. Probab.}, 
{\bf  14}  (2009),  2551-2579.



  \bibitem{bis1}
A. Biswas  and A.  Budhiraja, 
 Exit time and invariant measure
asymptotics for small noise constrained diffusions,
{\em  Stochastic Processes and
their Applications},
{\bf  121} (2011),  899-924.

  
   
      \bibitem{bra1}    
      S. Brassesco, 
      Some results on small random perturbations 
      of an infinite-dimensional
dynamical system,
{\em Stochastic Process. Appl.},
{\bf  38}  (1991),  33-53.


 

       \bibitem{brz1}
    Z. Brzezniak,
      B.  Goldys
      and
       T.  Jegaraj,
        Large deviations and transitions between
equilibria for stochastic Landau-Lifshitz-Gilbert 
equation, 
{\em Arch. Ration. Mech. Anal.},
{\bf 226}  (2017),   497-558.

 
 \bibitem{bud1}
A. Budhiraja and
 P. Dupuis, 
 A variational representation for positive
  functionals of infinite dimensional Brownian
motion, 
{\em Probab. Math. Statist.},
{\bf  20}  (2000),  39-61.

 \bibitem{bud2}
A. Budhiraja, P. Dupuis 
and V. Maroulas,
 Large deviations for infinite dimensional
stochastic dynamical systems,
{\em  Ann. Probab.},
{\bf  36} (2008), 1390-1420.


 
  

\bibitem{caff1}
L. Caffarelli, J. Roquejoffre and Y. Sire,
Variational problems for free boundaries
for the fractional Laplacian,
{\em J. Eur. Math. Soc.},
{\bf 12} (2010), 1151-1179.




  \bibitem{card1}    
C. Cardon-Weber, 
Large deviations for a Burgers\rq{}-type SPDE, 
{\em Stochastic Process.
Appl.},
{\bf  84 } (1999),  53-70.



 \bibitem{cerr1}    
 S.  Cerrai and M. Rockner,
{    Large deviations for stochastic reaction-diffusion 
systems
with multiplicative noise and non-Lipschitz reaction term}, 
{\em Ann. Probab.},
{\bf  32}  (2004), 1100-1139.

 \bibitem{cerr2}    
S.  Cerrai and N. Paskal,
Large deviations principle for the invariant
measures of the
2D stochastic Navier-Stokes equations with vanishing
noise correlation,
{\em Stoch PDE: Anal Comp},
{\bf 10} (2022), 1651-1681. 



  \bibitem{cerr3}   
S.  Cerrai and A. Debussche, 
Large deviations for the dynamic  $\Phi^{2n}_d$
  model, 
  {\em Appl.
Math. Optim.}, {\bf 80}  (2019), 81-102.

    
 \bibitem{cerr4}   
S.  Cerrai and A. Debussche,
 Large deviations for the two-dimensional stochastic
Navier-Stokes equation with 
vanishing noise correlation,
{\em  Ann. Inst. Henri Poincare  Probab.
Stat.},
{\bf  55}  (2019),
 211-236.

 
 \bibitem{cerr5}   
    S. Cerrai and  M. Freidlin, 
    Approximation of quasi-potentials and exit problems for 
    multidi-mensional RDE\rq{}s with noise, 
    {\em Transactions of the AMS},
     {\bf 363}  (2011),  3853-3892.

  

 \bibitem{chen1}    
Z. Chen and B. Wang,
 Invariant measures of fractional stochastic
  delay reaction-diffusion equation on
   unbounded domains, 
   {\em Nonlinearity}, 
   {\bf 34}  (2021), 3969.


 \bibitem{che1}     
     F.  Chenal and A. Millet, 
     Uniform large deviations for parabolic SPDEs and
applications, 
{\em Stochastic Process. Appl.},
{\bf  72}  (1997),  161-186.
     
 
 \bibitem{chow1}     
P.L. Chow, 
Large deviation problem for some parabolic It\^{o} equations, 
{\em Commun. Pure
Appl. Math.}, {\bf  45} (1992), 97-120.


 
   \bibitem{chu1} 
I.  Chueshov and A.  Millet, 
Stochastic 2D hydrodynamical type systems: 
Well posedness
and large deviations, 
{\em Appl. Math. Optim.},
{\bf  61}  (2010), 379-420.

        


   \bibitem{duan1} 
J.  Duan and A. Millet, 
Large deviations for the Boussinesq equations under
random influences, 
{\em Stochastic Process. Appl.},
{\bf  119}  (2009), 2052-2081.



 
 
\bibitem{dem1}
A. Dembo
 and  O. Zeitouni, 
 {\em Large Deviations Techniques and
Applications 38},
 Springer-Verlag, Berlin, 2010. 

  

   

 \bibitem{dine1}
  E.  Di Nezza, G.  Palatucci and
  E.  Valdinoci,
   Hitchhiker\rq{}s  guide to the fractional Sobolev
    spaces,
    {\em  Bull. Sci. Math.},
    {\bf  136}  (2012), 
    521-573.
    
       \bibitem{dup1}
      P. Dupuis
      and  R.S. Ellis, 
      {\em A Weak Convergence Approach
       to the Theory of Large Deviations}, 
       Wiley-Interscience, New
York, 1997.

 


 \bibitem{far1}     
W.G. Faris and G.  Jona-Lasinio, 
Large fluctuations for a nonlinear heat equation
with noise, 
{\em J. Phys. A},
{\bf  15}  (1982), 3025-3055.
 
       \bibitem{fre1}
     M.I. Freidlin, Random perturbations of
     reaction-diffusion equations:
     the quasi-deterministic approximations,
       {\em Transactions of the AMS}, 
     {\bf 305} (1988), 665-697.
     
      
  
    \bibitem{fre2}
    M. I. Freidlin and  A.  D. Wentzell,
    {\em Random Perturbations of Dynamical Systems},
    Springer-Verlag,  New York, 2012.
    
   
 
 





\bibitem{gal1}
C. Gal and M. Warma,
Reaction-diffusion equations
with fractional diffusion on non-smooth
domains with various boundary conditions,
{\em Discrete Continuous Dynamical Systems},
{\bf 36}  (2016),
1279-1319.


 
 
 
\bibitem{garr1}
A. Garroni  and S. Muller,
A variational model for dislocations
in the line tension limit,
{\em Arch. Ration. Mech. Anal.},
{\bf  181} (2006), 535-578.


 
     \bibitem{gau1}  
    E.  Gautier,
     Uniform large deviations for the nonlinear
Schrodinger equation with multiplicative noise,
{\em  Stochastic Processes and
their Applications}, 
{\bf  115} (2005),  1904-1927.


 
 
 

\bibitem{gu1}
A. Gu,  D. Li, B.  Wang and H.  Yang, 
Regularity of random attractors for fractional stochastic 
reaction-diffusion equations on 
$\R^n$, 
{\em J.  Differential Equations}, 
{\bf 264}  (2018), 7094-7137. 

%

 \bibitem{guan2}
 Q. Guan and Z. Ma,
 Reflected symmetric $\alpha$-stable processes and regional fractional
 Laplacian,
 {\em Probab. Theory Related Fields},
 {\bf 134} (2006), 649-694.
 
 
    
\bibitem{hai1}  
 M. Hairer
and  H. Weber, 
Large deviations for white-noise driven, nonlinear stochastic PDEs
in two and three dimensions, 
{\em Ann. Fac. Sci. Toulouse Math.},
 24 (2015),  55-92.

   

\bibitem{jara1}
M. Jara, Nonequilibrium scaling limit for a tagged particle
in the simple exclusion process with long jumps,
{\em Comm.  Pure  Appl. Math.},  
{\bf 62}  (2009), 198-214.

 
   

\bibitem{kall1}  
G.  Kallianpur and J. Xiong, 
Large deviations for a class of stochastic partial differential
equations, 
{\em Ann. Probab.},
{\bf  24}  (1996), 320-345.


  
 
 \bibitem{kos1}
 M. Koslowski, A. Cuitino and M. Ortiz,
 A phasefield theory of dislocation dynamics,
 strain hardening and hysteresis in ductile single
 crystal,
 {\em J. Mech. Phys. Solids},
 {\em 50} (2002),
 2597-2635.
  
  
   
   \bibitem{liu1} 
W.  Liu, 
Large deviations for stochastic evolution 
equations with small multiplicative noise,
{\em Appl. Math. Optim.},
{\bf  61}  (2010),  27-56.

 

\bibitem{luh1}
H. Lu, P. W. Bates, S.  Lu and M. Zhang,
Dynamics of 3D fractional complex
Ginzburg-Landau equation,
{\em J. Differential Equations}, {\bf 259}
(2015), 5276-5301.



\bibitem{luh2}
H. Lu,  P. W.  Bates,  J. Xin  and M. Zhang,
Asymptotic behavior of stochastic  fractional
power dissipative equations on $\R^n$,
{\em Nonlinear Analysis TMA}, 
{\bf 128}  (2015),
176-198.
 
 


\bibitem{mart1}  
F. Martinelli, E. Olivieri and 
E.  Scoppola,
Small random perturbations of
finite- and infinite-dimensional dynamical systems: 
unpredictability of exit times, 
{\em J. Statist.
Phys.}, 
{\bf 55}  (1989), 477-504.
 
   \bibitem{ort1} 
V.  Ortiz-Lopez and 
M.  Sanz-Sole, 
A Laplace principle for a stochastic wave equation
in spatial dimension three, 
{\em Stochastic Analysis 2010}, 
Springer, Heidelberg, 2011,  31-49.

    


      \bibitem{pes1} 
  S. Peszat,  Large deviation estimates
   for stochastic evolution equations,
  {\em Probab.
Theory Related Fields},
 {\bf 98}  (1994),  113-136.
     

   
  
 \bibitem{ren1}
 J. Ren and X. Zhang,
 Freidlin-Wentzell\rq{}s large deviations
 for stochastic equations,
 {\em Journal of Functional Analysis},
 {\bf  254} (2008), 3148-3172.
 
     
  \bibitem{roc1} 
M. Rockner, T.
 Zhang  and X. Zhang, 
 Large deviations for stochastic
tamed 3D Navier-Stokes equations, 
{\em Appl. Math. Optim.},
{\bf  61}  (2010), 267-285.
 
 \bibitem{ros1}
 X. Ros-Oton  and J. Serra,
  The Dirichlet problem for the fractional Laplacian:
   regularity up to the boundary,
   {\em Journal  de  Mathematiques Pures et Appliquees},
   {\bf 101} (2014),   275-302.
   

    \bibitem{sal1}  
M. Salins,   A.  Budhiraja and P. Dupuis,
Uniform large deviation principles for
Banach space valued stochastic evolution equations,
  {\em Transactions of the AMS}, 
{\bf 372} (2019), 8363-8421.

\bibitem{sal2}  
M. Salins,   Equivalences and counterexamples
 between several definitions of the uniform large deviations
principle,
{\em  Probab. Surv.}, {\bf  16} (2019),
  99-142.
  
  \bibitem{sal3}  
M. Salins, 
  Systems of small-noise stochastic reaction-diffusion
  equations satisfy a large deviations principle that is
uniform over all initial data,
{\em Stochastic Processes and their Applications},
 {\bf 142} (2021), 159-194.
 
  
     \bibitem{ser1}
 R.  Servadei  and E.  Valdinoci,
   On the spectrum of two different fractional 
   operators,
   {\em  Proc. Roy. Soc. Edinburgh Sect. A},
   {\bf  144}   (2014), 
    831-855. 
    
     \bibitem{ser2}
 R.  Servadei  and E.  Valdinoci,
    Variational methods for non-local 
    operators of elliptic type,
    {\em Discrete Contin.  Dyn. Syst.},
    {\bf  33}   (2013), 
    2105-2137.  

   
\bibitem{sow1}  
R. B. Sowers, 
Large deviations for a reaction-diffusion equation 
with non-Gaussian
perturbations, 
{\em Ann. Probab.},
{\bf  20}  (1992),  504-537.
   

   
      \bibitem{str1}
    D.W. Stroock, 
    {\em
    An Introduction to the Theory of Large Deviations}, 
    Spring-Verlag,
New York, 1984.

    
  
   
 
  
    
     
    
    

 

%
%
%


%

   
%
%
%

%
%
%
%
%
%
%
 
\bibitem{wan10}
B.  Wang,
 Asymptotic behavior of non-autonomous
 fractional stochastic reaction-diffusion equations,
 {\em Nonlinear Analysis TMA},
 {\bf 158} (2017),  60-82. 
 
 
\bibitem{wangJDE2019}
B.  Wang,
 Dynamics of fractional stochastic
  reaction-diffusion equations on 
  unbounded domains driven by nonlinear noise,
  {\em J. Differential Equations},
  {\bf 268} (2019), 1-59.
  
%
 



\bibitem{wangJDE2023}
B.  Wang,
Large deviations of fractional stochastic equations with
non-Lipschitz drift and multiplicative noise on
unbounded domains,
{\em J. Differential Equations},
{\bf  376} (2023),  1-38.


\bibitem{wangSD2023}
B.  Wang,
Uniform large deviations of fractional 
stochastic equations with
polynomial drift on unbounded domains,
{\em Stochastics and Dynamics},
{\bf 23} (2023), 2350049. 

 

%
%
%
%
%

 \bibitem{rwan1}
 R. Wang, T. Caraballo and
 N. Tuan, 
Mean attractors and invariant measures of locally monotone and generally coercive SPDEs driven by superlinear noise,
{\em J. Differential Equations},
{\bf 381} (2024), 209-259.

     \bibitem{var1}
     S.R.S. Varadhan, Asymptotic probabilities
     and differential equations, 
     {\em Comm. Pure
Appl. Math.}, {\bf   19}  (1966),  261-286.
     
 
      \bibitem{var2} 
     S.R.S. Varadhan, 
     {\em Large Deviations and Applications}, 
      SIAM, Philadelphia, 1984.
     
     

 \bibitem{ver1}
A.Y. Veretennikov,   On large deviations for SDEs with small
diffusion and averaging,
{\em  Stochastic Processes and their Applications},
{\bf 89} (2000),  69-79.




 \end{thebibliography}
\end{document}